\newcommand{\on}{\operatorname}
\newcommand{\cal}{\mathcal}
\newcommand{\mbf}{\mathbf}
\def\C{{\mathbb C}}
\def\Z{{\mathbb Z}}
\def\N{{\mathbb N}}
\def\1{{\bf 1}}
\def \End{{\rm End}}
\def \<{\langle}
\def \>{\rangle}
\def \w{\omega}
\def \sl{\frak{sl}}
\def \h{\mathfrak{h}}
\def \w{\omega}
\numberwithin{equation}{section}
\newtheorem{theorem}{Theorem}[section]
\newtheorem{prop}[theorem]{Proposition}
\newtheorem{lem}[theorem]{Lemma}
\newtheorem{cor}[theorem]{Corollary}
\theoremstyle{definition}
\newtheorem{defn}[theorem]{Definition}
\newtheorem{remark}[theorem]{Remark}
 \newenvironment{proofof}[1]{\noindent {\bf #1}}
\begin{document}
\title[Heisenberg Vertex Operator Algebras]{Moduli spaces of conformal structures on Heisenberg vertex algebras }

\author{ Yanjun Chu}
\address[Chu]{1. School of Mathematics and Statistics,  Henan University, Kaifeng, 475004, China\\
2. Institute of Contemporary Mathematics, Henan University, Kaifeng
475004, China}

\email{chuyj@henu.edu.cn}

\author{Zongzhu Lin}
\address[Lin]{ Department of Mathematics, Kansas State University, Manhattan, KS 66506, USA}
\email{zlin@math.ksu.edu}
\begin{abstract}
This paper is a continuation to understand  Heisenberg  vertex algebras in terms of moduli spaces of their conformal structures.  We study the moduli space of the conformal structures on a Heisenberg vertex algebra that have the standard fixed conformal gradation. As we know in Proposition \ref{pp3.1} in Sect.3, conformal vectors  of the  Heisenberg vertex  algebra  $V_{\hat{\h}}(1,0)$ that have the standard fixed conformal gradation is parameterized by a complex  vector $h$ of its weight-one subspace. First, we classify all  such conformal structures  of the  Heisenberg vertex  algebra  $V_{\hat{\h}}(1,0)$ by describing the automorphism group of the Heisenberg vertex algebra $V_{\widehat{\h}}(1,0)$ and then we describe moduli spaces of their conformal structures that have the standard fixed conformal gradation.  Moreover, we study the moduli spaces of semi-conformal vertex operator subalgebras of each of such conformal structures of  the  Heisenberg vertex  algebra  $V_{\hat{\h}}(1,0)$. 
In such cases,  we describe  their semi-conformal vectors  as pairs consisting
of  regular subspaces  and  the projections of $h$ in these regular subspaces. Then by automorphism groups $G$ of  Heisenberg vertex operator algebras, we get all  $G$-orbits of  varieties consisting of semi-conformal vectors of these vertex operator algebras. Finally, using properties of  these varieties, we give two characterizations  of Heisenberg vertex operator algebras. 
\end{abstract}
\subjclass[2010]{17B69}

\maketitle
\section{Introduction}

\subsection{}
 A vertex operator algebra (VOA) is a vertex algebra together with a compatible conformal structure, i.e., a module for the Virasoro Lie algebra. A vertex algebra can have many different conformal structures. Similarly a vertex subalgebra of vertex operator algebra may or may not share the same conformal structure.  This paper is to study the moduli space of the conformal structures on a Heisenberg vertex algebra that have the standard fixed conformal gradation. Then we study the moduli spaces of semi-conformal vertex operator subalgebras of each of the conformal structures. This is a continuation  to characterize Heisenberg vertex operator algebras by the structures of the varieties consisting of  their semi-conformal vectors as in \cite{CL}.
\subsection{}
In  \cite{JL2}, Jiang and the second author  used a class of subalgebras for a vertex operator algebra to study level-rank duality in vertex operator algebra theory. For a vertex operator algebra $V$ with  the conformal vector $\omega$, a vector $\omega'\in V$ is said to be a semi-conformal vector if $\omega'$ is the conformal vector of a vertex operator subalgebra 
$U$ (with possibly different conformal vectors as in \cite[3.11.6]{LL}) such that $\omega_n|_U=\omega'_n|_U$ for all $n\geq 0$ and such a vertex operator subalgebra  $U$ is called a semi-conformal vertex operator subalgebra of $V$(For short, we also call  $U$  a semi-conformal subalgebra  of $V$ throughout the paper.)
To emphasize on the conformal vector $\omega$ of a vertex operator algebra $V$, we write $V$ as  a pair $(V,\omega)$. Let  $\on{Sc}(V,\omega)$ be the set of semi-conformal vectors of $(V,\omega)$. Then the set $\on{Sc}(V,\omega)$  is an affine algebraic variety over $\mathbb{C}$(cf.\cite[Theorem 1.1]{CL}). In this paper,  we shall continue to understand non-standard Heisenberg vertex operator algebras  by using their affine algebraic varieties $\on{Sc}(V,\omega)$.

If $U$ is a semi-conformal subalgebra of  a vertex operator algebra $V$, then its commutant $C_V(U)$ in $V$ is also a semi-conformal subalgebra. We are interested in classifying all semi-conformal subalgebras $U$ such that $ U=C_V(C_V(U))$. Such a semi-conformal  subalgebra  called conformally closed is the unique maximal semi-conformal  subalgebra with a fixed conformal vector.  Each such semi-conformal  subalgebra  is uniquely determined by its conformal vector $\omega'$.  Thus the variety $\on{Sc}(V,\omega)$ also classifies all conformally closed semi-conformal subalgebras in $V$.   
 
 Analog of conformally semi-conformal subalgebra 
 $U$ in a vertex operator algebra $V$ is a von Neumann 
 $C^*$-algebra $M$ in the algebra $B(\cal{H})$ of bounded linear operators on a Hilbert space $\cal{H}$ as in \cite{JS}.  In case 
 $U$ has trivial center (corresponding to factors), by denoting 
 $U'=C_V(U)$,  the pair $(U, U')$ in $ V$ corresponding to the 
 commuting pair of von Neumann algebras $(M, M')$.  
 It is speculated that when $V$ is a rational vertex operator 
 algebra, then any conformally closed semi-conformal 
 subalgebra $U$ and its commutant $U'$ 
 are also rational. One of the question is to decompose $V$ 
 as the sum of modules of $U\otimes U'$.  For the theory of 
 subfactors, the main goal is to construct invariants to classify 
 subfactors of a factor (see \cite{JMS, P, ST}). One of the 
 approaches to the classification in \cite{JMS} is to construct a certain 
 2-module category and some combinatorial data called principal graph. 
 The Heisenberg vertex algebra has a very close analogy to 
 (finite dimensional) Hilbert space of  representation theory of 
 von Neumann algebras. In the up coming work we will 
 construct such analogy for vertex operator algebras using 
 the induction and Jacquet -functors and induced representation 
 theory of semi-conformal subalgebras started in
  \cite{JL3}.

\subsection{}
Let $\h$ be a $d$-dimensional orthogonal space, i.e, $\h$ is a vector space equipped with a nondegenerate symmetric bilinear form $\< \cdot, \cdot\>$.  It is well-known that  a Heisenberg vertex algebra $V_{\widehat{\h}}(1,0)$ has a family conformal vectors $\omega_{h} $ parameterized by a vector $h\in \h$ (see Sect.~\ref{sec3.1}). For different vectors $h$, the resulted vertex operator algebras are not isomorphic in general since the central charges can be different (see \cite[Examples 2.5.9]{BF}), although the underlying vertex algebra is same.  When $h=0$, 
 the  Heisenberg vertex operator algebra is said to be the standard Heisenberg operator algebra and it is easy to compute its automorphism group  which is the complex orthogonal group.  In \cite{CL}, we studied the varieties of  semi-conformal vectors for the standard Heisenberg operator algebra and  characterized its structure. When $h\neq 0$, we say the corresponding  Heisenberg vertex operator algebras  to be non-standard Heisenberg operator algebras. For all non-standard Heisenberg operator algebras, their automorphism groups are no longer the complex orthogonal group. So similar questions are more complicated than standard cases.
In this paper, we shall concentrate on  the moduli spaces of conformal structures of Heisenberg vertex algebras and  semi-conformal structures  of Heisenberg vertex operator algebras.

\subsection{}

Note that $\h(-1)\mbf{1}$ generates a Heisenberg vertex algebra $V_{\widehat{\h}}(1,0)$ of level $1$ with a standard gradation so that  $V_{\widehat{\h}}(1,0)_1=\h(-1)\mbf{1}$. The vertex algebra $V_{\widehat{\h}}(1,0)$ with the conformal vectors $\omega_{h}$  has the conformal gradation being the same as the standard gradation (\ref{e3.1}) in Sect.3. In general for a vertex algebra, different conformal structures have different conformal gradations. One of the questions would be to classify all conformal structures that  have the same conformal gradations. One of the results of this paper is to show that the set $\{\omega_{h}|h\in \h\}$ provides all such possible conformal structures. To study the isomorphism classes of all such conformal structures, we first describe the automorphism group of the Heisenberg vertex algebra $V_{\widehat{\h}}(1,0)$ preserving the standard gradation (\ref{e3.1}) in Sect.3.

 Let  $\on{O}(\h)$ be the group of orthogonal transformations of $\h$. For  the vertex algebra $V_{\widehat{\h}}(1,0)$ and the vertex operator algebras  $(V_{\widehat{\h}}(1,0),\omega_h)$, we determine their automorphism groups $\on{Aut}^{gr}_{VA}(V_{\widehat{\h}}(1,0))$ preserving the standard gradation (\ref{e3.1}) in Sect.3 and $\on{Aut}_{VOA}(V_{\widehat{\h}}(1,0),\omega_h)$, respectively.
\begin{theorem}\label{t1.1}

1)  The automorphism group of the Heisenberg vertex algebra $V_{\widehat{\h}}(1,0)$ preserving the standard gradation (\ref{e3.1}) in Sect.3 is 
$$\on{Aut}^{gr}_{VA}(V_{\widehat{\h}}(1,0))=\on{Aut}(\h,\<\cdot,\cdot\>)\cong \on{O}(\h);$$
2) The automorphism group of the Heisenberg vertex operator algebra $(V_{\widehat{\h}}(1,0),\omega_h)$ for $h\in \h$  is 
\begin{equation}
\on{Aut}_{VOA}(V_{\widehat{\h}}(1,0),\omega_h)=\{\sigma\in \on{O}(\h)|\sigma(h)=h\},
\end{equation}
which is the isotropic subgroup  $\on{O}(\h)_h$ of $\on{O}(\h)$ for  the vector $h\in \h$.
In particular, the automorphism group of $(V_{\widehat{\h}}(1,0),\omega_h)$ for $h=0$
is $$\on{Aut}_{VOA}(V_{\widehat{\h}}(1,0),\omega_h)=\on{Aut}(\h,\<\cdot,\cdot\>)\cong \on{O}(\h);$$\end{theorem}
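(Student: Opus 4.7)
The plan is to reduce both statements to the weight-one subspace $V_{\widehat{\h}}(1,0)_1 = \h(-1)\mbf{1}$, which we identify with $\h$ itself, and then to exploit the fact that this subspace generates the whole vertex algebra.

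For part 1), I would first observe that any $\sigma \in \on{Aut}^{gr}_{VA}(V_{\widehat{\h}}(1,0))$ restricts to a linear automorphism $\bar\sigma : \h \to \h$ because $\sigma$ preserves the gradation and fixes $\mbf{1}$. The key step is to show that $\bar\sigma$ is orthogonal. This follows from the Heisenberg relation: for $a,b \in \h$ one has $a(1)\bigl(b(-1)\mbf{1}\bigr) = \langle a,b\rangle \mbf{1}$. Since a vertex algebra automorphism intertwines all modes (so $\sigma \circ a(1) \circ \sigma^{-1} = \bar\sigma(a)(1)$) and preserves the vacuum, applying $\sigma$ to this identity gives $\langle \bar\sigma(a), \bar\sigma(b)\rangle = \langle a,b\rangle$, hence $\bar\sigma \in \on{O}(\h)$. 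Conversely, any $\sigma \in \on{O}(\h)$ lifts to an automorphism of $\widehat{\h}$ fixing its central element, and then induces an automorphism of $V_{\widehat{\h}}(1,0) \cong U(\widehat{\h}^-)\mbf{1}$ that respects the vertex algebra structure and the standard gradation. These two constructions are mutually inverse.

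For part 2), any VOA automorphism of $(V_{\widehat{\h}}(1,0),\omega_h)$ must preserve the conformal gradation, which by Proposition \ref{pp3.1} coincides with the standard gradation; so it lies in the group computed in part 1) and may be viewed as $\sigma \in \on{O}(\h)$. The defining condition $\sigma(\omega_h) = \omega_h$ is then analyzed using the explicit form $\omega_h = \frac{1}{2}\sum_{i=1}^d u_i(-1)u_i(-1)\mbf{1} + h(-2)\mbf{1}$ relative to an orthonormal basis $\{u_i\}$ of $\h$. The first (quadratic, Casimir-type) summand is invariant under every $\sigma \in \on{O}(\h)$ because it is built from the bilinear form, while the second summand transforms as $h(-2)\mbf{1} \mapsto \sigma(h)(-2)\mbf{1}$. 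Therefore $\sigma(\omega_h) = \omega_h$ if and only if $\sigma(h) = h$, giving precisely the isotropy subgroup $\on{O}(\h)_h$; specializing at $h=0$ recovers the full orthogonal group.

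The main (though mild) obstacle is the well-definedness of the lift in part 1): one must verify that an orthogonal transformation of $\h$ genuinely extends to a vertex algebra automorphism, and not merely to a linear map commuting with the gradation. This amounts to checking compatibility with the Borcherds identity on the generators, which reduces to the observation that $\on{O}(\h)$ preserves the defining bracket $[a(m), b(n)] = m\langle a,b\rangle \delta_{m+n,0}$ of $\widehat{\h}$ and hence respects the induced PBW structure on $U(\widehat{\h}^-)\mbf{1}$.
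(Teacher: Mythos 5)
Your proposal is correct and follows essentially the same route as the paper: the paper's Lemma 3.2 establishes orthogonality of the restriction via the identity $\sigma(h_i)_1\sigma(h_j)\mbf{1}=\sigma((h_i)_1h_j)\mbf{1}=\<h_i,h_j\>\mbf{1}$ (your mode-intertwining argument), extends orthogonal maps using generation by $V_1$, and then reduces part 2) to the computation $\sigma(\omega_h)=\omega_0+\sigma(h)(-2)\mbf{1}$, exactly as you do. Your added care about the lift respecting the Borcherds identity via the Heisenberg bracket is a slightly more explicit version of a step the paper treats as routine.
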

\begin{cor} Two vertex operator algebras $(V_{\widehat{\h}}(1,0),\omega_h)$ and $(V_{\widehat{\h}}(1,0),\omega_{h'}) $ are isomorphic if and only if $h'=g(h)$ for some $g\in \on{O}(\h)$. In particular the moduli space of the conformal structures on $V_{\widehat{\h}}(1,0)$ preserving the standard gradation (\ref{e3.1}) in Sect.3 is $ \h/\on{O}(\h)$. 
\end{cor}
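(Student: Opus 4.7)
The plan is to derive this corollary directly from Theorem \ref{t1.1} together with Proposition \ref{pp3.1} (which parameterizes conformal vectors with the standard fixed conformal gradation by vectors $h \in \h$).

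For the ``if'' direction, suppose $h' = g(h)$ for some $g \in \on{O}(\h)$. By part 1) of Theorem \ref{t1.1}, $g$ extends uniquely to a vertex algebra automorphism $\tilde{g} \in \on{Aut}^{gr}_{VA}(V_{\widehat{\h}}(1,0))$ preserving the standard gradation (\ref{e3.1}). I would then verify that $\tilde{g}(\omega_h) = \omega_{g(h)}$: the formula for $\omega_h$ from Section 3 expresses it as a quadratic polynomial in the weight-one generators plus a linear term involving $h$, and since $\tilde{g}$ acts on the weight-one subspace $\h(-1)\mathbf{1}\cong \h$ via the orthogonal transformation $g$, this is a direct substitution that uses only the orthogonality of $g$. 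Hence $\tilde{g}$ is the required VOA isomorphism.

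For the ``only if'' direction, let $\phi : (V_{\widehat{\h}}(1,0),\omega_h) \to (V_{\widehat{\h}}(1,0),\omega_{h'})$ be a VOA isomorphism. By definition it is a vertex algebra automorphism of $V_{\widehat{\h}}(1,0)$ satisfying $\phi(\omega_h)=\omega_{h'}$. Since both conformal gradations coincide with the standard gradation (\ref{e3.1}), and $\phi$ intertwines $L(0)=(\omega_h)_1$ with $L'(0)=(\omega_{h'})_1$, it follows that $\phi$ preserves the standard gradation. Hence by part 1) of Theorem \ref{t1.1} one has $\phi \in \on{Aut}^{gr}_{VA}(V_{\widehat{\h}}(1,0))$, so $\phi = \tilde{g}$ for some $g \in \on{O}(\h)$. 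Applying the calculation of the ``if'' direction backwards, $\phi(\omega_h) = \omega_{g(h)}$, and comparing with $\phi(\omega_h) = \omega_{h'}$ together with the uniqueness statement of Proposition \ref{pp3.1} (distinct $h$ give distinct $\omega_h$) yields $h' = g(h)$.

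The moduli space description then follows formally: by Proposition \ref{pp3.1} the set of conformal structures on $V_{\widehat{\h}}(1,0)$ preserving the standard gradation is in bijection with $\h$ via $h \mapsto \omega_h$, and the equivalence relation of VOA isomorphism on this parameter space is, by what has just been proved, the relation of lying in the same $\on{O}(\h)$-orbit. Hence the moduli space is $\h/\on{O}(\h)$. The only nontrivial check in the whole argument is the identity $\tilde{g}(\omega_h)=\omega_{g(h)}$, which is a short symbolic computation once the explicit formula for $\omega_h$ from Section 3 is available; I do not anticipate any serious obstacle.
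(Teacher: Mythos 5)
Your proposal is correct and follows essentially the same route as the paper: the paper proves the biconditional directly as Lemma 3.2 (restricting a VOA isomorphism to $V_1$ to get an orthogonal map fixing up $h\mapsto h'$, and conversely extending $g\in \on{O}(\h)$ to the vertex algebra and computing $\tilde g(\omega_h)=\omega_{g(h)}$), then reads off Theorem \ref{t1.1} and the corollary from that argument, whereas you invoke Theorem \ref{t1.1}(1) first and then perform the identical computation. The only cosmetic difference is the order of deduction; the key verification $\tilde g(\omega_h)=\omega_{g(h)}$ and the injectivity of $h\mapsto\omega_h$ are exactly what the paper uses.
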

\subsection{}
Let $\h'\subset \h$ be a subspace of $\h$. If the restriction $\<\cdot, \cdot\>|_{\h'}$ of the bilinear form $\<\cdot, \cdot\>$ on $\h$ to $\h'$ is still nondegenerate, we say $\h'$ is a {\em regular subspace} of $\h$. If $ \h'$ is regular, then we have $ \h=\h' \oplus \h'^{\perp}$ and there is a standard projection map $P_{\h'}: \h\rightarrow \h'$. Set 
\begin{equation}\on{Reg}(\h)=\{\h'\;|\;\h'\; \text{is a regular subspace of } \; \h\}.\end{equation}
Fixing a vector $h\in \h$, we denote by
\begin{equation}\on{Reg}(\h)_{h}=\{(\h',h')|\h'\in \on{Reg}(\h), h'=P_{\h'}(h)\}.\end{equation}
 We can construct an one to one correspondence between $ \on{Sc}(V_{\widehat{\h}}(1,0),\omega_{h})$ and 
 $\on{Reg}(\h)_{h}$. That is, for  each $\omega'\in \on{Sc}(V_{\widehat{\h}}(1,0),\omega_{h})$, there exists   a unique pair $(\h',h')\in\on{Reg}(\h)_{h}$ corresponding to   $\omega'$. 
  
 Note that the automorphism group $G:=\on{Aut}(V_{\widehat{\h}}(1,0),\omega_{h})$  acts on $\on{Reg}(\h)_{h}$ naturally.  Also $\on{Reg}(\h)_{h}$ is a partially ordered set under the subspace inclusion relation. We have the following description of the variety $\on{Sc}(V_{\widehat{\h}}(1,0),\omega_{h})$.
 \begin{theorem} \label{t1.2}
\item [1)] There is an order preserving $G$-equivariant bijection  $\on{Sc}(V_{\widehat{\h}}(1,0),\omega_{h})\cong\on{Reg}(\h)_{h}$;

\item [2)] There exists a longest chain in $\on{Sc}(V_{\widehat{\h}}(1,0),\omega_{h})$
    such that the length of this chain  equals to $d$:
 there exist $\omega^1,\cdots,\omega^{d-1}\in \on{Sc}(V_{\widehat{\h}}(1,0),\omega_{h})$ such that
 $$0=\omega^0\prec \omega^1\prec\cdots\prec\omega^{d-1}\prec \omega^d=\omega_{h}.$$
\end{theorem}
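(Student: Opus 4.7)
The plan is to prove (1) by constructing mutually inverse, $G$-equivariant, order-preserving maps between $\on{Sc}(V_{\widehat{\h}}(1,0),\omega_{h})$ and $\on{Reg}(\h)_{h}$, and to deduce (2) from (1) by exhibiting an orthogonal flag in $\h$. Throughout I will use the orthogonal decomposition $\h=\h'\oplus\h'^{\perp}$ available when $\h'$ is regular, together with Proposition \ref{pp3.1}.

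For the forward map, send $(\h',h')\in\on{Reg}(\h)_{h}$ to $\omega_{h'}=\frac{1}{2}\sum_{i}u_{i}(-1)^{2}\mbf{1}+h'(-2)\mbf{1}$, where $\{u_{i}\}$ is an orthonormal basis of $\h'$. Regularity of $\h'$ ensures that $\h'(-1)\mbf{1}$ generates a sub vertex algebra $U\cong V_{\widehat{\h'}}(1,0)$ inside $V_{\widehat{\h}}(1,0)$, and by Proposition \ref{pp3.1} applied to $U$, the vector $\omega_{h'}$ is a conformal vector on $U$ with the standard gradation. To check semi-conformality I would use the orthogonal decomposition $\omega_{h}=\omega_{h'}+\omega^{\perp}_{h-h'}$, where $\omega^{\perp}_{h-h'}$ is the Heisenberg conformal vector on $\h'^{\perp}$ with shift $h-h'\in\h'^{\perp}$. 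Every mode $v(m)$ with $v\in\h'^{\perp}$ and $m\ge 0$ commutes with $\h'(-n)$ for all $n$ and kills the vacuum, so all nonnegative modes of $\omega^{\perp}_{h-h'}$ vanish on $U$, yielding $(\omega_{h})_{n}|_{U}=(\omega_{h'})_{n}|_{U}$ for $n\ge 0$.

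For the reverse map, given $\omega'\in\on{Sc}(V_{\widehat{\h}}(1,0),\omega_{h})$ with associated conformally closed semi-conformal subalgebra $U$, the equality $L^{\omega_{h}}(0)|_{U}=L^{\omega'}(0)|_{U}$ forces $U$ to inherit the standard gradation, so $U_{0}=\C\mbf{1}$ and $U_{1}=\h'(-1)\mbf{1}$ for a unique subspace $\h'\subseteq\h$. The crucial structural step is to show that $\h'$ is regular: the classification of Virasoro-type weight-two vectors $\omega'=\frac{1}{2}\sum_{i,j}A_{ij}u_{i}(-1)u_{j}(-1)\mbf{1}+k(-2)\mbf{1}$ in $V_{\widehat{\h}}(1,0)$ forces $A$ to be a symmetric projection onto a regular subspace (its image being regular since $\h=\im(A)\oplus\ker(A)$ is an orthogonal decomposition), and one checks this image coincides with $\h'$. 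Given this, $V_{\widehat{\h'}}(1,0)\subseteq U$, and Proposition \ref{pp3.1} applied to $U$ forces $\omega'=\omega_{\widetilde{h}'}$ for some $\widetilde{h}'\in\h'$. Comparing $L^{\omega_{h}}(1)u(-1)\mbf{1}$ with $L^{\omega'}(1)u(-1)\mbf{1}$ on each $u\in\h'$ produces $\<h-\widetilde{h}',u\>=0$, forcing $\widetilde{h}'=P_{\h'}(h)$. The two maps are inverse to each other by construction, order-preserving via $U_{1}\subseteq U_{2}\Longleftrightarrow\h'_{1}\subseteq\h'_{2}$, and $G$-equivariant since $G=\on{O}(\h)_{h}$ acts on $\on{Reg}(\h)_{h}$ through $g\cdot(\h',h')=(g(\h'),g(h'))$, preserving the projection condition because $g(h)=h$.

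For (2), diagonalize the nondegenerate symmetric form on $\h$ over $\C$ to obtain an orthonormal basis $\{e_{1},\ldots,e_{d}\}$, set $\h^{i}=\on{span}(e_{1},\ldots,e_{i})$ (each regular, with restricted form the identity) and $\omega^{i}=\omega_{P_{\h^{i}}(h)}$; by (1) this produces a strictly ascending chain $0=\omega^{0}\prec\omega^{1}\prec\cdots\prec\omega^{d}=\omega_{h}$, which is maximal since any chain of regular subspaces of $\h$ has length bounded by $\dim\h=d$. The principal obstacle lies in the reverse map, specifically in showing regularity of $\h'=U_{1}$: this requires the explicit classification of Virasoro vectors in the Heisenberg vertex algebra (which constrains the quadratic coefficient to be a symmetric projection) together with the matching of its image with the weight-one subspace of the semi-conformal subalgebra $U$ determined by $\omega'$.
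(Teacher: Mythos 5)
Your proposal is correct and is essentially the paper's argument in different packaging: both proofs come down to the fact that the quadratic part of a semi-conformal vector is a self-adjoint idempotent $A$, i.e.\ an orthogonal projection onto the regular subspace $\on{Im}A$, with linear part $A(h)=P_{\on{Im}A}(h)$, and both obtain part 2) from the flag of coordinate subspaces. Two remarks on where you diverge. First, the paper simply reads the pair $(A,B)$ off $\omega'\in V_2$ and imports the equations $A^{tr}=A$, $A^2=A$, $A\Lambda=B$ from \cite[Proposition 3.1]{CL} (Proposition \ref{p3.1} here); your backward map instead passes through $U=C_V(C_V(\langle\omega'\rangle))$, sets $\h'=U_1$, and applies Proposition \ref{pp3.1} to $U$ --- but knowing that $U_1=\on{Im}A$ and that $U$ is generated by $U_1$ is precisely the content of Proposition \ref{p3.7} and Theorem \ref{t1.5}, which the paper establishes \emph{after} Theorem \ref{t1.2}; defining $\h'=\on{Im}A$ directly from $\omega'$ avoids this detour. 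Second, order preservation is the one step you assert rather than prove: the relation $\preceq$ on $\on{Sc}$ is defined by the equations of \cite[Proposition 2.8]{CL}, and the paper must compute (Proposition \ref{p4.4}, Lemma \ref{l4.7}) that for pairs in $\on{SymId}_h$ these reduce to $A_2A_1=A_1$, $A_2B_1=B_1=A_1B_2$, and hence to $\on{Im}A_1\subseteq\on{Im}A_2$; your clause ``$U_1\subseteq U_2\Leftrightarrow \h_1'\subseteq\h_2'$'' does not engage with that definition. Both are gaps of detail rather than of substance; your forward-map verification via the splitting $\omega_h=\omega_{h'}+\omega^{\perp}_{h-h'}$ and the vanishing of the nonnegative modes of $\omega^{\perp}_{h-h'}$ on $\langle\h'(-1)\mbf{1}\rangle$ is a correct, self-contained replacement for the citation of \cite[Proposition 3.1]{CL} in that direction.
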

We denote by $y:=\<h',h'\>$ for $(\h',h')\in \on{Reg}(\h)_h$. By Theorem \ref{t1.2}, we have the $G$- orbit decomposition of $\on{Sc}(V_{\widehat{\h}}(1,0),\omega_{h})$ with respect to the following  two cases.

\begin{theorem}\label{t1.3} For the vector $0\neq h\in \h$ with  $\<h,h\>\neq 0$, all $G$-orbits of $\on{Reg}(\h)_h$ are  as follows
\begin{itemize}
\item[1)] $I_1(k):=\{(\h',h)|\on{dim}\h'=k\}$ for $k=1,\cdots,d$;
\item[2)] $I_2(k):=\{(\h',0)|\on{dim}\h'=k\}$ for $k=0,1,\cdots,d-1$;
\item[3)] When $y\neq 0, \<h,h\>$, $I_3(k,y):=\{(\h',h')| \on{dim}\h'=k,  h'\neq 0, h\}$ for $k=1,\cdots,d-1$;
\item[4)] When $y=\<h,h\>$, $I_4(k,\<h,h\>):=\{(\h',h')| \on{dim}\h'=k, h'\neq 0, h\}$ for $k=1,\cdots,d-2$;
\item[5)] When $y=0$, $I_5(k,0):=\{(\h',h')| \on{dim}\h'=k  , h'\neq 0\}$ for $k=2,\cdots,d-1$,
\end{itemize}
i.e.,  $$\on{Reg}(\h)_{h}=\bigcup_{k=1}^{d}I_1(k)\cup \bigcup_{k=0}^{d-1}I_2(k)\cup\bigcup_{k=1}^{d-1}\left(\bigcup_{y\neq 0,\<h,h\>}I_3(k,y)\right)\cup\bigcup_{k=1}^{d-2}I_4(k,\<h,h\>))\cup\bigcup_{k=2}^{d-1}I_5(k,0).$$
\end{theorem}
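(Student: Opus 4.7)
The plan is to combine Theorem~\ref{t1.1}, which identifies $G = \on{Aut}_{VOA}(V_{\widehat{\h}}(1,0),\omega_h)$ with the isotropy group $\on{O}(\h)_h$, with Witt's extension theorem for the complex orthogonal space $(\h,\<\cdot,\cdot\>)$. The action of $g\in G$ on $\on{Reg}(\h)_h$ is $g\cdot(\h',h')=(g\h',g h')$, which is well defined because $g(h)=h$ yields $g(P_{\h'}(h)) = P_{g\h'}(g h) = P_{g\h'}(h)$. First I would list the obvious $G$-invariants of a pair $(\h',h')$: the integer $\dim\h'$, the scalar $y=\<h',h'\>$ (since $G$ acts by isometries), and the two Boolean conditions $h'=h$ (equivalently $h\in\h'$) and $h'=0$ (equivalently $h\in\h'^{\perp}$). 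This already shows each of the listed sets $I_1(k),I_2(k),I_3(k,y),I_4(k,\<h,h\>),I_5(k,0)$ is a union of $G$-orbits.

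The heart of the argument is to promote this to: each listed set is a \emph{single} orbit. Given two pairs $(\h_1',h_1'),(\h_2',h_2')$ with matching invariants, write $h=h_i'+h_i''$ with $h_i''\in\h_i'^{\perp}$; then
\begin{equation}
\<h_1'',h_1''\> \;=\; \<h,h\>-\<h_1',h_1'\> \;=\; \<h,h\>-\<h_2',h_2'\> \;=\; \<h_2'',h_2''\>.
\end{equation}
Since $\h_1'$ and $\h_2'$ are regular subspaces of the same dimension $k$ in $\h$ they are isometric, and Witt's extension theorem provides an isometry $g_1\colon\h_1'\to\h_2'$ with $g_1(h_1')=h_2'$. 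The same argument applied to the regular subspaces $\h_1'^{\perp}\cong\h_2'^{\perp}$ of dimension $d-k$ produces an isometry $g_2$ with $g_2(h_1'')=h_2''$. Setting $g=g_1\oplus g_2$ relative to the orthogonal decompositions $\h=\h_i'\oplus\h_i'^{\perp}$, one obtains $g\in\on{O}(\h)$ with $g(h)=h_2'+h_2''=h$, so $g\in G$ and $g\cdot(\h_1',h_1')=(\h_2',h_2')$.

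Finally I would justify the precise index ranges and the union decomposition. The crucial structural fact used is that a regular complex orthogonal space of dimension $n$ contains a nonzero isotropic vector iff $n\geq 2$. This forces $k\geq 2$ in $I_5(k,0)$ (we need $h'\in\h'$ nonzero isotropic) and $d-k\geq 2$, i.e. $k\leq d-2$, in $I_4(k,\<h,h\>)$ (we need $h''\in\h'^{\perp}$ nonzero isotropic). The ranges in $I_1,I_2$ use that $\mathbb{C}h$ and $h^{\perp}$ are regular of dimensions $1$ and $d-1$ since $\<h,h\>\neq 0$, so regular subspaces of any admissible dimension containing $h$ (resp.\ contained in $h^{\perp}$) are readily produced. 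For $I_3(k,y)$ standard constructions realize any $y\notin\{0,\<h,h\>\}$ for every $1\leq k\leq d-1$. The union decomposition then follows by partitioning each pair according to which of $h'=h$, $h'=0$, or $h'\notin\{0,h\}$ holds, and in the last case according to whether $y$ equals $0$, $\<h,h\>$, or neither. The main obstacle is handling the boundary situations where $h_i'=0$ or $h_i''=0$, so that one of the pieces $g_1,g_2$ carries no constraint from $h$; these are absorbed into the Witt step by allowing the corresponding isometry to be chosen freely, and the independence of the two summands in the orthogonal decomposition is precisely what makes this gluing procedure automatic.
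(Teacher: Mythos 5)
Your proposal is correct and follows essentially the same route as the paper: identify the invariants $\dim\h'$, $y=\<h',h'\>$, and the conditions $h'=h$, $h'=0$; show each invariant class is a single orbit by building an isometry separately on $\h'$ and on $\h'^{\perp}$ and gluing along $\h=\h'\oplus\h'^{\perp}$; and derive the index ranges from the fact that a regular complex space has a nonzero isotropic vector iff its dimension is at least $2$. The only difference is presentational: where you invoke Witt's extension theorem once, the paper proves the needed special cases by hand (Lemma~\ref{l4.16} for transitivity on nonzero isotropic vectors and the explicit basis constructions in Lemmas~\ref{l4.15}--\ref{l4.20}).
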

\begin{theorem}\label{t1.4} For the vector $0\neq h\in \h$ with  $\<h,h\>= 0$, $\on{Reg}(\h)_{h}$ contains orbits under the action of 
$G$ as follows
\begin{itemize}

\item[1)] $J_1(k):=\{(\h',h)|\on{dim}\h'=k\}$ for $k=2,\cdots,d$;
\item[2)] $J_2(k):=\{(\h',0)|\on{dim}\h'=k\}$ for $k=0,1,\cdots,d-2$;
\item[3)] When $y\neq 0$, $J_3(k,y):=\{(\h',h')| \on{dim}\h'=k\}$ for $k=1,\cdots,d-1$;
\item[4)] When $y=0$, $J_4(k,0):=\{(\h',h')| \on{dim}\h'=k ,h'\neq 0,h\}$ for $k=2,\cdots,d-2$,
\end{itemize}
i.e., $$\on{Reg}(\h)_{h}= \bigcup_{k=2}^{d}J_1(k)\cup\bigcup_{k=0}^{d-2} J_2(k)\cup\bigcup_{k=1}^{d-1}\left(\bigcup_{y\neq 0}J_3(k,y)\right)\cup\bigcup_{k=2}^{d-2}J_4(k,0).$$\end{theorem}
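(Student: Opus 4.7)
The plan is to classify each pair $(\h',h')\in \on{Reg}(\h)_h$ by two numerical invariants — the dimension $k=\on{dim}\h'$ and the norm $y=\<h',h'\>$ — and to treat separately the degenerate values $h'=h$ and $h'=0$. Writing $h=h'+(h-h')$ with $h'\in\h'$ and $h-h'\in\h'^{\perp}$, isotropy of $h$ gives $\<h-h',h-h'\>=-y$. Since $G=\on{O}(\h)_h$ preserves both $k$ and $y$ as well as the conditions $h'=h$ and $h'=0$, each of the sets $J_1(k)$, $J_2(k)$, $J_3(k,y)$, $J_4(k,0)$ listed in the statement is automatically $G$-stable.

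Next, I would derive the stated dimension ranges from the interplay of regularity and isotropy. For $J_1(k)$ the regular subspace $\h'$ contains the nonzero isotropic vector $h$, forcing $\on{dim}\h'\geq 2$, hence $k\in\{2,\ldots,d\}$. For $J_2(k)$ the same reasoning applied to $\h'^{\perp}$ (which then contains $h$) gives $\on{dim}\h'^{\perp}\geq 2$, so $k\leq d-2$. For $J_3(k,y)$ with $y\neq 0$, the vectors $h'$ and $h-h'$ are both nonzero with nonzero norms $y$ and $-y$, hence $1\leq k\leq d-1$. For $J_4(k,0)$ both $h'\in\h'$ and $h-h'\in\h'^{\perp}$ are isotropic nonzero, so regularity forces $\on{dim}\h'\geq 2$ and $\on{dim}\h'^{\perp}\geq 2$, that is $2\leq k\leq d-2$. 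A routine case split on whether $y=0$ and whether $h'\in\{0,h\}$ then shows every element of $\on{Reg}(\h)_h$ lies in exactly one of the listed families.

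Transitivity of $G$ on each family reduces to Witt's extension theorem. Given two pairs $(\h'_1,h'_1),(\h'_2,h'_2)$ in the same family, I would first produce an isometry $\phi:\h'_1\to\h'_2$ sending $h'_1\mapsto h'_2$: both $\h'_i$ are nondegenerate quadratic spaces over $\C$ of equal dimension, hence isometric, and the vectors have identical norm $y$ and the same pattern of (non)vanishing. Independently I would construct an isometry $\psi:\h'^{\perp}_1\to\h'^{\perp}_2$ carrying $h-h'_1\mapsto h-h'_2$ by the same principle applied to norm $-y$. The composition $g=\phi\oplus\psi$ then satisfies $g\in\on{O}(\h)$, $g(\h'_1)=\h'_2$, and $g(h)=g(h'_1+(h-h'_1))=h'_2+(h-h'_2)=h$, so $g\in G$ realizes the required transitivity.

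The main obstacle is the family $J_4(k,0)$, where both matched vectors are \emph{isotropic} nonzero and ordinary orthonormal-basis extension does not apply; however, Witt's theorem still guarantees that any two nonzero isotropic vectors in a nondegenerate complex quadratic space of dimension $\geq 2$ are equivalent under its isometry group, and this is exactly the dimension lower bound extracted in the previous paragraph. The same observation is what pins down the lower index $k=2$ in $J_1$ and the upper bounds $k\leq d-2$ in $J_2$ and $J_4$.
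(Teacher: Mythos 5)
Your proposal is correct and follows essentially the same route as the paper: stratify $\on{Reg}(\h)_{h}$ by $\on{dim}\h'$, $y=\<h',h'\>$ and the degenerate cases $h'\in\{0,h\}$, obtain the dimension bounds from the fact that a regular subspace containing a nonzero isotropic vector must have dimension at least $2$ (applied to $\h'$ and to $\h'^{\perp}$ via the orthogonal decomposition $h=h'+(h-h')$ with $\<h-h',h-h'\>=-y$), and prove transitivity by constructing isometries separately on $\h'$ and $\h'^{\perp}$ and gluing them to an element of $\on{O}(\h)_{h}$. The only cosmetic difference is that you invoke Witt's extension theorem wholesale, whereas the paper proves the required special case by hand via hyperbolic pairs (Lemma~\ref{l4.16}) and packages the case analysis into Lemmas~\ref{l4.15}--\ref{l4.22}.
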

\subsection{}
For each $ \omega' \in \on{Sc}(V_{\widehat{\h}}(1,0),\omega_{h})$,
there is a pair $(\h',h')\in \on{Reg}(\h)_{h}$ corresponding to it. By Theorem 4.6 in Sect.4, we have a  linear transformation $\mathcal{A}_{\omega'}: \h\rightarrow \h $ such that  $\on{Im}\mathcal{A}_{\omega'}=\h', h'=\mathcal{A}_{\omega'}(h)$. And there is another  regular subspace $\on{Ker}\mathcal{A}_{\omega'}$ of $\h$ giving an orthogonal decomposition $\on{Im}\mathcal{A}_{\omega'}\oplus \on{Ker}\mathcal{A}_{\omega'} =\h$ such that $(\on{Ker}\mathcal{A}_{\omega'},h-h')\in  \on{Reg}(\h)_{h}. $ Each of the  abelian Lie algebras $\on{Im}\mathcal{A}_{\omega'}$ and $ \on{Ker}\mathcal{A}_{\omega'}$    generates a Heisenberg vertex operator subalgebra in $V_{\widehat{\h}}(1,0)$. In fact, they are semi-conforaml subalgebras of $(V_{\widehat{\h}}(1,0),\omega_{h})$ and can be both realized as commutant subalgebras of $(V_{\widehat{\h}}(1,0),\omega_{h})$. 
\begin{theorem}\label{t1.5}
For each  $\omega'\in \on{Sc}(V_{\widehat{\h}}(1,0),\omega_{h})$, the following assertions hold. 
\begin{itemize}
\item[1)] $\on{Im}\mathcal{A}_{\omega'}$ generates a Heisenberg vertex operator algebra $$ (V_{\widehat{\on{Im}\mathcal{A}_{\omega'}}}(1,0),\omega')=(C_{V_{\widehat{\h}}(1,0)}(\langle \omega_h-\omega'\rangle),\omega')$$
and $\on{Ker}\mathcal{A}_{\omega'}$ generates a Heisenberg vertex operator algebra $$(V_{\widehat{\on{Ker}\mathcal{A}_{\omega'}}}(1,0),\omega_{h}-\omega')=(C_{V_{\widehat{\h}}(1,0)}(\langle\omega'\rangle),\omega_{h}-\omega');$$

\item[2)]
$$(C_{V_{\widehat{\h}}(1,0)}(V_{\widehat{\on{Ker}\mathcal{A}_{\omega'}}}(1,0)),\omega')=(V_{\widehat{\on{Im}\mathcal{A}_{\omega'}}}(1,0),\omega');$$
$$(C_{V_{\widehat{\h}}(1,0)}(V_{\widehat{\on{Im}\mathcal{A}_{\omega'}}}(1,0))),\omega_{h}-\omega')=(V_{\widehat{\on{Ker}\mathcal{A}_{\omega'}}}(1,0),\omega_{h}-\omega'));$$

\item[3)] $( V_{\widehat{\h}}(1,0),\omega_{h})\cong (C_{V_{\widehat{\h}}(1,0)}(\langle\omega'\rangle),\omega_{h}-\omega')\otimes (C_{V_{\widehat{\h}}(1,0)}(C_{V_{\widehat{\h}}(1,0)}(\langle\omega'\rangle)),\omega').$
\end{itemize}
\end{theorem}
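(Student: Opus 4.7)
The plan is to reduce all three assertions to a tensor product factorization of $V_{\widehat{\h}}(1,0)$ along the orthogonal decomposition $\h = \on{Im}\mathcal{A}_{\omega'} \oplus \on{Ker}\mathcal{A}_{\omega'}$ furnished by Theorem~4.6, and then to read off each claim from this factorization combined with a single $L(0)$-grading argument.

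Writing $\h' := \on{Im}\mathcal{A}_{\omega'}$ and $\h'' := \on{Ker}\mathcal{A}_{\omega'}$, both subspaces are regular and orthogonal, so at the level of affine Heisenberg Lie algebras one has a natural surjection $\widehat{\h'} \oplus \widehat{\h''} \twoheadrightarrow \widehat{\h}$ identifying the two central elements, which produces the canonical vertex algebra isomorphism
$$V_{\widehat{\h}}(1,0) \;\cong\; V_{\widehat{\h'}}(1,0) \otimes V_{\widehat{\h''}}(1,0).$$
Under this identification the explicit formula for $\omega_h$ recalled in Section~3 splits additively along the decomposition $h = h' + (h-h')$ with $h' \in \h'$ and $h - h' \in \h''$; a short computation then yields $\omega' = \omega_{h'} \otimes \mathbf{1}$ and $\omega_h - \omega' = \mathbf{1} \otimes \omega_{h-h'}$, each being a conformal vector of the corresponding Heisenberg tensor factor in the family parameterized by Proposition~\ref{pp3.1}.

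For part~(1), the vertex subalgebra generated by $\on{Im}\mathcal{A}_{\omega'} = \h'$ is, by construction, the first tensor factor $V_{\widehat{\h'}}(1,0) \otimes \mathbf{1}$, which I will identify with $C_{V_{\widehat{\h}}(1,0)}(\langle \omega_h - \omega' \rangle)$ as follows. The inclusion $\supseteq$ is immediate because the two tensor factors mutually commute. For $\subseteq$, any element $v$ of the commutant must satisfy $L_{\omega_h - \omega'}(n)\, v = 0$ for all $n \geq -1$; in particular the vanishing at $n = 0$, together with Proposition~\ref{pp3.1} (which guarantees that $\omega_{h-h'}$ induces the standard $\Z_{\geq 0}$ conformal grading on $V_{\widehat{\h''}}(1,0)$ with one-dimensional weight-zero subspace $\C \mathbf{1}$), forces $v$ to lie in $V_{\widehat{\h'}}(1,0) \otimes \mathbf{1}$. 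The statement for $\on{Ker}\mathcal{A}_{\omega'}$ is symmetric. Part~(2) now follows by applying the same $L(0)$-grading argument once more with $\omega'$ (respectively $\omega_h - \omega'$) playing the role of the Virasoro generator, and part~(3) is a direct consequence of the vertex algebra tensor factorization combined with the matching $\omega_h \leftrightarrow \omega' + (\omega_h - \omega')$.

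The main technical obstacle lies in the commutant computation above: without the regularity information built into Section~3, one could not exclude the possibility that the $\omega_{h-h'}$-weight-zero subspace of $V_{\widehat{\h''}}(1,0)$ were larger than $\C \mathbf{1}$, in which case the inclusion $C_{V_{\widehat{\h}}(1,0)}(\langle \omega_h - \omega' \rangle) \subseteq V_{\widehat{\h'}}(1,0) \otimes \mathbf{1}$ would fail. Thus the crucial preparatory input is precisely the parametrization by Proposition~\ref{pp3.1}, which forces the conformal grading of $\omega_{h-h'}$ to be the standard one on each Heisenberg tensor factor.
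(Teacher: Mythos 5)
Your proposal is correct, and it shares the paper's overall skeleton (decompose $\h=\on{Im}\mathcal{A}_{\omega'}\oplus\on{Ker}\mathcal{A}_{\omega'}$, factor $V_{\widehat{\h}}(1,0)$ as a tensor product of the two Heisenberg subalgebras, and check that $\omega'$ and $\omega_h-\omega'$ are the conformal vectors of the respective factors via $A_{\omega'}^2=A_{\omega'}$). Where you genuinely diverge is in how the commutants are pinned down. The paper first proves Proposition~\ref{p3.7}: it invokes the Lepowsky--Li identifications $C_{V}(\langle\omega'\rangle)=\on{Ker}_V L'(-1)$ and $C_V(C_V(\langle\omega'\rangle))=\on{Ker}_V(L(-1)-L'(-1))$, writes $L'(-1)$ explicitly as a normal-ordered quadratic expression in the $h_i(k)$, and solves the resulting linear system $(I-A_{\omega'})X=0$ to identify the weight-one subspaces of the two commutants with $\on{Im}\mathcal{A}_{\omega'}$ and $\on{Ker}\mathcal{A}_{\omega'}$; only afterwards does it assemble the tensor decomposition and conclude. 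You instead put the tensor factorization first and compute the full commutant in one stroke: an element of $C_V(\langle\omega_h-\omega'\rangle)$ is killed by $L_{\omega_h-\omega'}(0)=\mathrm{Id}\otimes L^{V''}(0)$, and since the $\omega_{h-h'}$-grading on $V_{\widehat{\on{Ker}\mathcal{A}_{\omega'}}}(1,0)$ is the standard $\N$-grading with one-dimensional degree-zero piece (Proposition~\ref{pp3.1} and the remark opening Section~4), the kernel is exactly $V_{\widehat{\on{Im}\mathcal{A}_{\omega'}}}(1,0)\otimes\mathbf{1}$. Your route buys you a cleaner argument that avoids the normal-ordered-product computation and the linear system entirely; the paper's route buys the extra, independently useful description of the commutants as kernels of $L(-1)$-type operators on the weight-one space. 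You correctly identify the one point that must not be skipped, namely that the conformal grading induced by $\omega_{h-h'}$ on the $\on{Ker}\mathcal{A}_{\omega'}$-factor is the standard one with $\C\mathbf{1}$ in degree zero; with that in place your argument closes.
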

\subsection{}\label{sec1.5}
Based on  above results (See Theorem \ref{t1.2} -Theorem  \ref{t1.5}), we can give two characterizations of  Heisenberg vertex operator algebras $(V_{\widehat{\h}}(1,0),\omega_h)$ for $h\in \h$.

 In this paper, we consider  a vertex operator algebra $(V,\omega)$ satisfying the following conditions:
(1) $V$ is a {\em simple CFT type} vertex operator algebra  generated by $V_1$ (i.e., $V$ is $\N$-graded and $V_0=\C \mbf{1}$);
(2) The symmetric bilinear form $\<u,v\>=u_1v$ for $u,v\in V_1$ is  nondegenerate. For convenience, we call  such a vertex operator algebra $(V,\omega)$   {\em nondegenerate simple CFT type}. We note that for any vertex operator algebra $(V, \omega)$ and any $\omega' \in \on{Sc}(V,\omega)$, one has
$C_{V}(C_{V}\langle\omega'\rangle))\otimes C_{V}(\langle\omega'\rangle)\subseteq V$ as a conformal vertex operator subalgebra.
\begin{theorem} \label{thm1.6}
Let $(V,\omega)$ be a nondegenerate simple CFT type vertex operator algebra. If for
each $\omega'\in \on{Sc}(V,\omega)$, there are
\begin{equation} \label{e5.5}V\cong C_{V}(C_{V}\langle\omega'\rangle))\otimes C_{V}(\langle\omega'\rangle)\end{equation}
then $(V,\omega)$ is isomorphic to the Heisenberg vertex operator algebra $(V_{\hat{\h}}(1,0), \omega_{h})$ with $\h=V_1$ for some $ h\in V_1$. 
\end{theorem}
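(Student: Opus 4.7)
The plan is to reduce the theorem to showing that the weight-one space $V_1$ is abelian under the bracket $[u,v] := u_0 v$, so that $V_1$ generates a Heisenberg vertex subalgebra with the nondegenerate form $\langle u,v\rangle = u_1 v$; since $V$ is simple, of CFT type, and generated by $V_1$, this will force $V \cong V_{\widehat{V_1}}(1,0)$ as vertex algebras, and Proposition \ref{pp3.1} then identifies $\omega$ as $\omega_h$ for some $h \in V_1$. Before using the hypothesis (\ref{e5.5}), one records what is automatic on $V_1$: vertex-algebra skew-symmetry gives $u_0 v = -v_0 u$, so $v_0 v = 0$ and $V_1$ is a Lie algebra; the form $\langle u,v\rangle \mathbf{1} = u_1 v$ is symmetric, nondegenerate, and invariant under the bracket; and $L_1 : V_1 \to V_0 = \C\mathbf{1}$ is represented by a unique $h \in V_1$ with $L_1 u = \langle h, u\rangle \mathbf{1}$.

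Next, for each $v \in V_1$ with $c := \langle v,v\rangle \neq 0$, the identity $v_0 v = 0$ yields the OPE $Y(v,z)Y(v,w) \sim c(z-w)^{-2}$, so $v$ generates a Heisenberg subalgebra $H_v \subseteq V$ isomorphic to $V_{\widehat{\C v}}(1,0)$. Setting $h_v := \frac{\langle h,v\rangle}{c}\, v \in \C v$, I will use the element
\[
\omega_v := \frac{1}{2c}\, v_{-1}v + (h_v)_{-2}\mathbf{1} \in H_v
\]
and claim $\omega_v \in \on{Sc}(V,\omega)$. The required identity $\omega_n|_{H_v} = (\omega_v)_n|_{H_v}$ for $n \geq 0$ reduces to a check on the generator $v$: for $n=0$ both sides equal the vertex-algebra translation $L_{-1}$; for $n=1$ the degree-one grading makes both sides act as the identity on $v$; for $n=2$ the $L_1$-actions agree precisely by the choice $h_v = \frac{\langle h,v\rangle}{c}v$, using $L_1 v = \langle h,v\rangle\mathbf{1}$ and the standard Heisenberg calculation inside $H_v$; and for $n \geq 3$ both sides vanish by the CFT-type grading.

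Applying (\ref{e5.5}) with $\omega' = \omega_v$, write $V \cong A_v \otimes B_v$ with $A_v = C_V(C_V(\langle\omega_v\rangle))$ and $B_v = C_V(\langle\omega_v\rangle)$. Passing to weight one gives the orthogonal decomposition $V_1 = (A_v)_1 \oplus (B_v)_1$ with $[(A_v)_1,(B_v)_1] = 0$ and $(A_v)_1 \perp (B_v)_1$. The crucial rigidity is that $(A_v)_1 = \C v$: since $A_v$ is itself a nondegenerate simple CFT-type vertex operator algebra generated by $(A_v)_1$, with conformal vector $\omega_v$, the Sugawara/Heisenberg reconstruction of $\omega_v$ from $(A_v)_1$ must reproduce the explicit formula above, ruling out weight-one vectors in $A_v$ orthogonal to $v$. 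Consequently $(B_v)_1 = v^\perp$, and the bracket vanishing forces $v_0 u = 0$ for all $u \in v^\perp$; combined with $v_0 v = 0$, this gives $v_0 = 0$ on $V_1$. Since vectors $v$ with $\langle v,v\rangle \neq 0$ span $V_1$ over $\C$, we conclude $[V_1,V_1] = 0$, so $V_1$ generates a Heisenberg vertex algebra in $V$; simplicity and the fact that $V$ is generated by $V_1$ then force $V \cong V_{\widehat{V_1}}(1,0)$ as vertex algebras, and Proposition \ref{pp3.1} gives $\omega = \omega_h$ with $\h = V_1$.

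The principal obstacle is the rigidity claim $(A_v)_1 = \C v$ in the third paragraph. A priori the double commutant $A_v$ could contain weight-one vectors orthogonal to $v$ that happen to commute with every element of $C_V(\langle\omega_v\rangle)$; eliminating them requires iterating (\ref{e5.5}) for auxiliary semi-conformal vectors inside $A_v$ to force a minimality statement, and the argument rests essentially on the CFT-type, simplicity, and nondegeneracy of the form being inherited by the tensor factor $A_v$. A secondary but important technical point is that the shift by $(h_v)_{-2}\mathbf{1}$ in the definition of $\omega_v$ is essential: without it, $\omega_v$ would fail to be semi-conformal precisely when $L_1 v \neq 0$, which is the generic situation for the non-standard conformal vectors $\omega_h$ we are aiming to identify.
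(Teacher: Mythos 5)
Your strategy---proving that the Lie algebra $V_1$ is abelian by applying the hypothesis \eqref{e5.5} to a rank-one semi-conformal vector $\omega_v$ for every non-isotropic $v\in V_1$---is genuinely different from the paper's induction on $\dim V_1$, but it has a real gap, and it sits exactly where you flagged it: the rigidity claim $(A_v)_1=\C v$. The justification you offer (``the Sugawara/Heisenberg reconstruction of $\omega_v$ from $(A_v)_1$ must reproduce the explicit formula'') is circular: at that stage you do not yet know that $A_v=C_V(C_V(\langle\omega_v\rangle))$ is a Heisenberg vertex operator algebra---that is what the theorem is trying to establish. All you know is that $A_v$ is a nondegenerate simple CFT-type VOA generated by $(A_v)_1\ni v$ with conformal vector $\omega_v$, and the condition that $(\omega_v)_1$ act as the identity on a vector $u\in(A_v)_1$ with $u\perp v$ unwinds to $\frac{1}{2c}v_0v_0u-\lambda\, v_0u=u$, which does not visibly force $u=0$. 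This is the crux of the whole theorem, not a technicality. The paper confronts the same difficulty but only needs the statement for one well-chosen vector: it picks $h$ minimizing $\dim C_V(C_V(\langle\omega_h\rangle))_1$, shows that if this dimension were $\geq 2$ one could find $h'\perp h$ with $\omega_{h'}\preceq\omega_h$ and, by minimality, $\omega_{h'}=\omega_h$, and then derives a contradiction by producing an $\frak{sl}_2$-triple from $h''=[h',h]$ and checking inside $L_{\widehat{\frak{sl}}_2}(1,0)$ that the images of $\omega_h$ and $\omega_{h'}$ are distinct; induction on $\dim V_1$ then finishes. Your route needs the rigidity for a spanning set of non-isotropic $v$'s, a strictly stronger statement, and the ``iterated minimality'' you allude to is precisely the missing argument.

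A secondary but consequential point: your $\omega_v$ has the wrong coefficient on the derivative term. With $\langle v,v\rangle=c$ and $L(1)v=a\mbf{1}$ (so $a=\langle h,v\rangle$ in your notation), the semi-conformal vector is $\frac{1}{2c}v_{-1}v-\frac{a}{2c}v_{-2}\mbf{1}$ (see Proposition~\ref{p5.4}, where this is verified for $c=1$ via the computation $L(1)\omega'=0$ and $L(2)\omega'=\frac{1-3a^2}{2}\mbf{1}$); your term $+(h_v)_{-2}\mbf{1}=+\frac{a}{c}\,v_{-2}\mbf{1}$ is off by a factor of $-2$, and with it $L(1)\omega_v\neq 0$, so the element you feed into \eqref{e5.5} is not in $\on{Sc}(V,\omega)$. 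This is fixable, but as written the argument does not start. The remaining steps (abelianness of $V_1$ forcing a surjection $V_{\widehat{V_1}}(1,0)\to V$ which is an isomorphism by simplicity, followed by Proposition~\ref{pp3.1} to identify $\omega=\omega_h$) are sound and match the paper's endgame.
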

 For each  $\omega'\in \on{Sc}(V,\omega)$, we can define the height and depth of $\omega'$ in $\on{Sc}(V, \omega)$ analogous to those concepts of prime ideals in a commutative ring. This is also one of the motivations of studying the set of all semi-conformal vectors.  Considering the maximal length of  chains of semi-conformal vectors in $V$,  we can give another characterization of  Heisenberg vertex operator algebras.

\begin{theorem} \label{thm1.7}
Let $(V,\omega)$ be a nondegenerate simple CFT type vertex operator algebra. Assume $\on{\dim}V_1=d$.  If there exists a chain $0=\omega^0\prec\omega^1\prec\cdots \prec \omega^{d-1}\prec \omega^{d}=\omega$
in $\on{Sc}(V,\omega)$ such that $\on{dim}C_{V}(C_{V}(\langle\omega^{i}-\omega^{i-1}\rangle))_1\neq 0, ~for~i=1,\cdots, d$, then  $(V,\omega)$ is isomorphic to the Heisenberg vertex operator algebra $(V_{\widehat{\h}}(1,0), \omega_{h})$ with  $\h=V_1$ for some   $ h\in V_1$. 
\end{theorem}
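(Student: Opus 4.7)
The plan is to show that a chain of maximal possible length $d$ in $\on{Sc}(V,\omega)$ forces $V$ to split as a tensor product of $d$ rank-one Heisenberg vertex algebras, one for each ``gap''. Set $\tilde\omega^i := \omega^i - \omega^{i-1}$ for $i=1,\ldots,d$. The first step is to show these gap vectors are pairwise commuting Virasoro elements: the condition $\omega^{i-1}\prec\omega^i$ means $\omega^{i-1}$ is semi-conformal with respect to $\omega^i$, which forces $[(\tilde\omega^i)_m,(\omega^{i-1})_n]=0$ for all $m,n$; since $\omega^{i-1}=\tilde\omega^1+\cdots+\tilde\omega^{i-1}$, induction on $i$ yields $[(\tilde\omega^i)_m,(\tilde\omega^j)_n]=0$ for $i\neq j$.

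Next let $U_i:=C_V(C_V(\langle\tilde\omega^i\rangle))$, the conformally closed semi-conformal subalgebra with conformal vector $\tilde\omega^i$; by hypothesis $\dim(U_i)_1\geq 1$. The mutual commutativity of the $\tilde\omega^i$ combined with the order-reversing behavior of the commutant operation should promote to $U_j\subseteq C_V(U_i)$ for $i\neq j$. Granting this, for $u\in(U_i)_1$ and $v\in(U_j)_1$ we have $u_1 v=0$, so $(U_i)_1$ and $(U_j)_1$ are orthogonal under the form on $V_1$. The orthogonal direct sum $\bigoplus_i(U_i)_1\subseteq V_1$ together with $\sum_i\dim(U_i)_1\leq\dim V_1=d$ and $\dim(U_i)_1\geq 1$ forces $\dim(U_i)_1=1$ for every $i$ and $V_1=\bigoplus_{i=1}^d(U_i)_1$. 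Nondegeneracy of $\langle\cdot,\cdot\rangle$ on $V_1$ then restricts to nondegeneracy on each line $(U_i)_1$.

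For each $i$ pick $a_i$ spanning $(U_i)_1$. The semi-conformal condition gives $(\tilde\omega^i)_1 a_i=\omega_1 a_i=a_i$, so $a_i$ generates a rank-one Heisenberg vertex subalgebra $W_i\subseteq U_i$; the rank-one case of Proposition \ref{pp3.1} identifies $(W_i,\tilde\omega^i)\cong(V_{\widehat{\mathbb{C} a_i}}(1,0),\omega_{h_i})$ for some $h_i\in(W_i)_1$. Because the $W_i$ pairwise commute and $\bigoplus_i(W_i)_1=V_1$ generates $V$ (simple CFT type hypothesis), their product exhausts $V$ and is the tensor product $W_1\otimes\cdots\otimes W_d$; this is isomorphic to $(V_{\widehat{\h}}(1,0),\omega_h)$ with $\h=V_1$ and $h=\sum_i h_i\in V_1$, which is the desired conclusion.

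The main obstacle I anticipate is the promotion in the second step from the commutativity of the Virasoro generators $[(\tilde\omega^i)_m,(\tilde\omega^j)_n]=0$ to the inclusion $U_j\subseteq C_V(U_i)$, i.e., that the conformally closed semi-conformal subalgebras attached to mutually commuting Virasoro elements mutually commute. The inclusion $\langle\tilde\omega^j\rangle\subseteq C_V(\langle\tilde\omega^i\rangle)$ is immediate, but passing to double commutants requires the structural analysis of semi-conformal subalgebras developed in Sections 4--5, plus the fact that the form $\langle\cdot,\cdot\rangle$ on $V_1$ is nondegenerate so that orthogonal complements behave predictably. Without this step the orthogonality of the weight-one pieces fails and the dimension count collapses.
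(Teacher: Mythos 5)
Your overall strategy matches the paper's: extract the $d$ gap vectors $\widetilde\omega^i=\omega^i-\omega^{i-1}$, show that the subalgebras $U_i=C_V(C_V(\langle\widetilde\omega^i\rangle))$ have one-dimensional weight-one spaces summing to $V_1$, and identify $V$ with a tensor product of $d$ rank-one Heisenberg algebras. But the step you flag as the main obstacle is exactly where the two arguments diverge, and as written your proof is missing its decisive ingredient. The paper never proves that all $d$ subalgebras $U_i$ pairwise commute at once; it first gets the dimension count $\sum_i\dim(U_i)_1=\dim V_1$ and then invokes Lemma~\ref{lem5.7}: whenever $\dim C_V(\langle\omega'\rangle)_1$ and $\dim C_V(C_V(\langle\omega'\rangle))_1$ are both nonzero and add up to $\dim V_1$, one gets $V\cong C_V(\langle\omega'\rangle)\otimes C_V(C_V(\langle\omega'\rangle))$ with \emph{both factors generated by their weight-one subspaces}. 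Applying this with $\omega'=\omega^{d-1}$ peels off one rank-one factor, and induction on $d$ inside $C_V(C_V(\langle\omega^{d-1}\rangle))$ finishes the decomposition; Proposition~\ref{p5.4} (not Proposition~\ref{pp3.1}, which concerns the Heisenberg algebra itself) then identifies each rank-one factor. This route sidesteps both gaps in your sketch: the mutual commutativity of the $U_i$, and the implicit assumption that each $U_i$ is generated by $(U_i)_1$.

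Your step is nonetheless repairable. The inclusion $U_j\subseteq C_V(U_i)$ follows from the triple-commutant identity $C_V(C_V(C_V(X)))=C_V(X)$: once $\langle\widetilde\omega^j\rangle\subseteq C_V(\langle\widetilde\omega^i\rangle)$, one gets $U_j=C_V(C_V(\langle\widetilde\omega^j\rangle))\subseteq C_V(C_V(C_V(\langle\widetilde\omega^i\rangle)))=C_V(\langle\widetilde\omega^i\rangle)=C_V(U_i)$. Be careful, though, with how you obtain the mode commutativity in the first place: commuting with the sum $\omega^{i-1}=\widetilde\omega^1+\cdots+\widetilde\omega^{i-1}$ does not give commuting with each summand. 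The clean argument is that $\omega^{j-1}\preceq\omega^j\preceq\omega^{i-1}$ places $\widetilde\omega^j$ inside $C_V(C_V(\langle\omega^{i-1}\rangle))$, while $\widetilde\omega^i\in C_V(\langle\omega^{i-1}\rangle)$, and a double commutant commutes with the corresponding commutant by definition. With the inclusion in hand, your orthogonality and dimension count go through, and replacing $U_i$ by the rank-one Heisenberg $W_i=\langle a_i\rangle$ neatly avoids needing $U_i=\langle(U_i)_1\rangle$; you should only add a word on why the surjection $W_1\otimes\cdots\otimes W_d\to V$ is injective (each $W_i$ is a simple vertex algebra, so the tensor product is simple). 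In short: same plan as the paper, but the tensor-splitting step you could not complete is the heart of the proof, and the paper supplies it through Lemma~\ref{lem5.7} rather than through pairwise commutants.
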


\subsection{} 
One of the main  motivations of this work is to investigate the conformal structure on a vertex subalgebra of a vertex operator algebra.  In conformal field theory, the conformal vector completely determines the conformal structure (the module structure for the Virasoro Lie algebra).  In mathematical physics, a vertex operator algebra has been investigated extensively as a Virasoro module (see \cite{DMZ,D,KL,DLM,LY,M,LS,S,L,Sh}) by virtue of conformal vector.

\subsection{}
 This paper is organized as follows: In Sect.2, we review properties of semi-conformal vectors (subalgebras) of a vertex operator algebra according to \cite{JL2,CL}.  In Sect.3, we show that the set $\{\omega_{h}|h\in \h\}$ provides all conformal structures of the Heisenberg vertex algebra $V_{\widehat{\h}}(1,0)$ that  have the same conformal gradations and  determine the automorphism groups of the Heisenberg vertex algebra $V_{\widehat{\h}}(1,0)$ preserving the standard conformal gradation and Heisenberg vertex operator algebras  $(V_{\widehat{\h}}(1,0),\omega_{h})$.  Then we  study the moduli space of the conformal structures on  that have the standard conformal gradation. In Sect.4,  we describe the moduli spaces of semi-conformal vectors of the  Heisenberg vertex operator algebra $(V_{\widehat{\h}}(1,0),\omega_{h})$ with the vector $h\in \h$ and give the decompositions of orbits of the varieties $\on{Sc}(V_{\widehat{\h}}(1,0),\omega_{h})$. In Sect.5, we study  conformally closed semi-conformal subalgebras of  $(V_{\widehat{\h}}(1,0),\omega_{h})$. In Sect.6, we give two characterizations of  Heisenberg vertex operator algebras $(V_{\widehat{\h}}(1,0),\omega_{h})$ in terms of  properties of their semi-conformal vectors.

{\em Acknowledgement:}  This work started when the first author was visiting Kansas State University from September 2013 to September 2014. He thanks the support by Kansas State University and its hospitality.  The first author is supported by The Key Research Project of Institutions of Higher Education in Henan Province, P.R.China(No. 17A11003) and also thanks China Scholarship Council for their financial supports.  The second author thanks C. Jiang for many insightful discussions. This work was motivated from the joint work with her. The second author also thanks Henan University for the hospitality during his visit in the summer of 2018, during which this work was carried out. 
\section{ Semi-conformal vectors and semi-conformal subalgebras of a vertex operator algebra}
\setcounter{equation}{0}
\subsection{}
For basic notions and results associated with vertex operator algebras, one is
referred  to the books \cite{FLM,LL,FHL,BF}.  We will use $(V, Y, 1)$ to denote a vertex algebra and $(V, Y, 1, \omega)$ for a vertex operator algebra. When we deal with several different vertex algebras, we will use $ Y^V$, $1^{V}$, and $\omega^V$ to indicate the dependence of the vertex algebra or vertex operator algebra $V$.  For example $Y^V(\omega^V, z)=\sum_{n\in \Z} L^V(n)z^{-n-2}$.  To emphasize the presence of the conformal vector $\omega^V$, we will simply write $(V, \omega^V)$  or $(V,\omega) $ for a vertex operator algebra and $V$ simply for a vertex algebra (with $Y^V$ and $1^V$ understood). We refer \cite{BF} for the concept of vertex algebras. Vertex algebras need not be graded, while a vertex operator algebra $(V,\omega^V)$ is always $ \Z$-graded by the $L^V(0)$-eigenspaces $V_n$ with integer eigenvalues $ n \in \Z$. We assume that each $ V_n$ is finite dimensional over $\C$ and $ V_n=0$ if $ n<<0$.   
\subsection{}
 In this section, we shall first review semi-conformal vectors (subalgebras) of a vertex operator algebra (\cite{JL2,CL}).
For a vertex operator algebra $(W,\omega^W)$, we recall that a vertex operator algebra $ (V, \omega^V)$ is called semi-conformal if $ \omega^W_{n}|_V=\omega^V|_V$ for all $ n\geq 0$. We define
$$\begin{array}{lllll}
&\on{ScAlg}(W,\omega^W)=\{(V,\omega^V)\; |\; (V,\omega^V)~ \text{is~a~semi-conformal~subalgebra~of~}(W,\omega^W)\};\\
&\on{Sc}(W,\omega^W)=\{\omega'\in W\; |\;\omega'~\text{is~a~semi-conformal~vector~of}~(W,\omega^W)\};\\
&\overline{\on{S}(W,\omega^W)}=\{(V,\omega')\in \on{ScAlg}(W,\omega^W)|C_{W}(C_{W}(V))=V\},
\end{array}
$$
where $ C_{W}(V)$ is the commutant defined in \cite[3.11]{LL}.  A semi-conformal subalgebra $(U, \omega^U)$ of $W$ is called {\em conformally closed in $W$} if $ C_W(C_W(U))=U$ (see \cite{JL2}). So  the set $\overline{\on{S}(W,\omega^W)}$ consists of all conformally closed
semi-conformal subalgebras of $(W,\omega^W)$.

It follows from the definition that there is a surjective map $ \on{ScAlg}(W, \omega^W)\rightarrow \on{Sc}(W,\omega^W)$ by $(V, \omega^V)\mapsto \omega^V$.  There is also a surjective map $ \on{ScAlg}(W, \omega^W)\rightarrow \overline{\on{S}(W,\omega^W)}$ defined by $(V, \omega^V)\mapsto (C_W(C_W(V), \omega^V)$. Thus, 
 the restriction of the map $\on{ScAlg}(W,\omega^W)\rightarrow \on{Sc}(W, \omega^W)$ to the set $\overline{\on{S}(W,\omega^W)}$  is a bijection (\cite[Proposition 2.1]{CL}).
 

Let $(V,\omega^V)$ be a semi-conformal subalgebra of $(W,\omega^W)$.
 Then $(V,\omega^{V})$ has a unique maximal conformal extension
$(C_{W}(C_{W}(V)),\omega^{V})$ in $(W,\omega^{W})$ in the sense that
if $(V,\omega^V)\subset (U, \omega^V)$, then $(U, \omega^V)\subset  (C_{W}(C_{W}(V)),\omega^{V})$( see \cite[Corollary 3.11.14]{LL}).  



\subsection{}
Let  $(W,\omega^W)$ be a general $\Z$-graded vertex operator algebra. The set $\on{Sc}(W,\omega^W)$ forms an affine algebraic variety (\cite[Theorem 1.1]{CL}. In fact,  a semi-conformal  vector $\omega'\in W$ can be  characterized by algebraic  equations of degree at most $2$ as described in \cite[Proposition 2.2]{CL}. The algebraic variety $\on{Sc}(W,\omega^W)$ has also a partial order $\preceq$ (See \cite[Definition 2.7]{CL}), and this partial order  can be characterized by algebraic  equations  as described  in  \cite[Proposition 2.8]{CL}.
\subsection{}
In fact, the commutant of
$(W,\omega^W)$  can induce an involution $\omega^W-$ of $\on{Sc}(W,\omega^W)$ as follows
 $$
 \begin{array}{lllll}
 \omega^W-:\on{Sc}(W,\omega^W)\longrightarrow \on{Sc}(W,\omega^W)\\
 \hspace{2.8cm}\omega'\longmapsto \omega^W-\omega'.
 \end{array}
 $$
so for  $\omega^1,\omega^2\in \on{Sc}(W,\omega^W)$, we know 
$\omega^W-\omega^1$ and $\omega^W-\omega^2$ are  conformal vectors of commutants $C_{W}(\langle\omega^1\rangle)$ and  $C_{W}(\langle\omega^2\rangle)$, respectively. 
If $\omega^1\preceq \omega^2$, then $\omega^W-\omega^2\preceq \omega^W-\omega^1$.
\section{ Automorphism groups of the Heisenberg vertex algebra $V_{\widehat{\h}}(1,0)$ preserving  the standard gradation and  Heisenberg vertex operator algebras $(V_{\widehat{\h}}(1,0),\omega_h)$ }
\setcounter{equation}{0}
\subsection{} \label{sec3.1}
At first, we  recall  some results of Heisenberg vertex  algebras and refer to \cite{BF, LL} for
more details.

Let $\h$ be a $d$-dimensional vector space with a 
nondegenerate symmetric bilinear form $\<\cdot,\cdot\>$.
$\hat{\h}=\C[t,t^{-1}]\otimes\h\oplus\C C$ is the affiniziation of
the abelian Lie algebra $\h$ defined by
\begin{align*}
[h'\otimes t^{m},\,h''\otimes
t^{n}]=m\<h',h''\>\delta_{m,-n}C\hbox{ and }[C,\hat{\h}]=0
\end{align*}
for  any $h',h'' \in\h,\,m,\,n\in\Z$. Then $\hat{\h}^{\geq
0}=\C[t]\otimes\h\oplus\C C$ is an Abelian subalgebra. For
$\forall \lambda\in\h$, we can define an one-dimensional $\hat{\h}^{\geq
0}$-module $\C e^\lambda$ by the actions $(h\otimes
t^{m})\cdot e^\lambda=\<\lambda,h\>\delta_{m,0}e^\lambda$ and
$C\cdot e^\lambda=e^\lambda$ for $h\in\h$ and $m\geq0$.
Set
\begin{align*}
V_{\widehat{\h}}(1,{\lambda})=U(\hat{\h})\otimes_{U(\hat{\h}^{\geq 0})}\C
e^\lambda\cong S(t^{-1}\C[t^{-1}]\otimes \h),
\end{align*}
which is an $\hat{\h}$-module induced from $\hat{\h}^{\geq 0}$-module 
$\C e^\lambda$. When $ \lambda=0$,  let 
$\1=1\otimes e^0 \in V_{\widehat{\h}}(1,0)$. 
By the strong reconstruction theorem \cite[Theorem. 4.4.1]{BF}, 
there is a unique vertex algebra structure 
$Y:V_{\widehat{\h}}(1,0)\to(\End (V_{\widehat{\h}}(1,{0})))[[z,z^{-1}]]$ 
on $V_{\widehat{\h}}(1,0)$ such that 
\[ Y(h,z)=\sum_{n\in \Z}(h\otimes t^{n})z^{-n-1}\]
with $ h\otimes t^n \in \hat{\h}$ acting on $ V_{\hat{\h}}(1,0)$.

For an orthonormal basis $\{h_1,\cdots, h_d\}$   of $\h$,  
$V_{\widehat{\h}}(1,0)$ has a $\N$-gradation as follow
\begin{equation}\label{e3.1}
V_{\widehat{\h}}(1,0)=\bigoplus_{n\in \N}V_{\widehat{\h}}(1,0)_n,
\end{equation} 
where
$$V_{\widehat{\h}}(1,0)_n=\on{Span}_{\C}\{h_{i_1}(-n_1)\cdots h_{i_k}(-n_k)\cdot\mathbf{1}|k\in \N, 1\leq i_1\leq\cdots
\leq i_k\leq d,\;n_1,\cdots,n_k\geq 1\}.$$

For each $ h \in \h$, we define $\omega_h=\frac{1}{2}\sum\limits_{i=1}^{d}
h_i(-1)^2\cdot\mathbf{1}+h(-2)\mathbf{1}$, then $(V_{\widehat{\h}}(1,0),\,Y,\,\1,\,\w_{h})$ 
is  a  simple vertex operator algebra, with the conformal gradation being the same as defined above. When $h=0$, $(V_{\widehat{\h}}(1,0),\omega_0)$  is called  the {\em standard}
 Heisenberg vertex operator algebra, and it is independent of the choice of the orthonormal basis. When $h\neq 0$ we call $(V_{\widehat{\h}}(1,0),\omega_h)$ {\em non-standard} Heisenberg vertex operator algebra. For each $ \lambda \in \h^*=\h$, 
 $(V_{\widehat{\h}}(1,{\lambda}),Y)$ becomes an irreducible 
 $(V_{\widehat{\h}}(1,0),\omega_h)$-module  
 (see \cite{BF, FLM, LL}).

\begin{prop}  \label{pp3.1}If $ \omega'\in V_{\widehat{\h}}(1,0)_{2}$ is a conformal vector with the same conformal gradation (3.1), then $\omega'=\omega_h$ for some $ h\in \h$. 
\end{prop}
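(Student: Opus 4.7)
The plan is to pin down $\omega'$ by imposing that $L'(0)=\omega'_1$ acts as the identity on $V_{\widehat{\h}}(1,0)_1$. Fixing an orthonormal basis $\{h_1,\dots,h_d\}$ of $\h$, the weight-two subspace $V_{\widehat{\h}}(1,0)_2$ has basis $\{h_i(-1)h_j(-1)\mathbf{1}\mid i\leq j\}\cup\{h_i(-2)\mathbf{1}\}$, so I would write the most general candidate as
\begin{equation*}
\omega' = \frac{1}{2}\sum_{i,j=1}^{d} a_{ij}\, h_i(-1)h_j(-1)\mathbf{1} + \sum_{i=1}^{d} b_i\, h_i(-2)\mathbf{1},
\end{equation*}
with a symmetric matrix $A=(a_{ij})$ and scalars $b_i\in\C$. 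The hypothesis that the conformal gradation induced by $\omega'$ coincides with (\ref{e3.1}) reduces, on $V_{\widehat{\h}}(1,0)_1=\h(-1)\mathbf{1}$, to the single condition $\omega'_1 h_k(-1)\mathbf{1}=h_k(-1)\mathbf{1}$ for all $k$.

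The main calculation is then to evaluate $\omega'_1 h_k(-1)\mathbf{1}$ directly from the reconstruction $Y(h_i(-1)h_j(-1)\mathbf{1},z)={:}h_i(z)h_j(z){:}$ together with $Y(h_i(-2)\mathbf{1},z)=\partial_z h_i(z)$. The latter yields $(h_i(-2)\mathbf{1})_1=-h_i(0)$, which acts as zero on $V_{\widehat{\h}}(1,0)$, so the $b_i$ drop out of $\omega'_1$. For the quadratic part, expanding
\begin{equation*}
(h_i(-1)h_j(-1)\mathbf{1})_1 = \sum_{m+n=0}{:}h_i(m)h_j(n){:},
\end{equation*}
and noting $h_l(n)\mathbf{1}=0$ for $n\geq 0$ while $h_l(n)h_k(-1)\mathbf{1}=\delta_{n,1}\langle h_l,h_k\rangle\mathbf{1}$ for $n\geq 1$, only the pairs $(m,n)=(-1,1)$ and $(1,-1)$ survive. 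This gives $(h_i(-1)h_j(-1)\mathbf{1})_1 h_k(-1)\mathbf{1} = \delta_{jk}h_i(-1)\mathbf{1}+\delta_{ik}h_j(-1)\mathbf{1}$, and summing with the symmetry of $A$ produces $\omega'_1 h_k(-1)\mathbf{1}=\sum_i a_{ik}h_i(-1)\mathbf{1}$.

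Imposing $\omega'_1 h_k(-1)\mathbf{1}=h_k(-1)\mathbf{1}$ for every $k$ forces $a_{ik}=\delta_{ik}$, i.e.\ $A=I$; setting $h:=\sum_i b_i h_i\in\h$ then yields
\begin{equation*}
\omega' = \frac{1}{2}\sum_{i=1}^{d} h_i(-1)^2\mathbf{1} + h(-2)\mathbf{1} = \omega_h,
\end{equation*}
as required. The only obstacle is the mode bookkeeping in the normal-ordered sum, which is very mild. Notably, no use of the full Virasoro commutation relations or of the condition $L'(-1)=T$ is needed: the vector $h\in\h$ remains unconstrained by the single condition used, and by the discussion preceding the proposition each $\omega_h$ is already known to be a conformal vector whose gradation is (\ref{e3.1}), so both directions of the parametrization match up.
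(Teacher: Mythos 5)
Your proposal is correct and follows essentially the same route as the paper: both arguments come down to writing the general weight-two vector in the PBW basis and forcing $L'(0)=\omega'_1$ to act as the identity on $V_{\widehat{\h}}(1,0)_1$, which pins the quadratic part down to $\tfrac12\sum_i h_i(-1)^2\mathbf{1}$ while leaving the $h(-2)\mathbf{1}$ part free. Your version is slightly leaner, since the paper additionally derives the quadratic system coming from $L'(0)\omega'=2\omega'$ (its equations (3.3)) before discarding it in favor of the same weight-one computation you perform.
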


\begin{proof}
Note that 
$$V_{\widehat{\h}}(1,0)_2=\on{Span}_{\C}\{h_{i}(-1)^2\cdot\mathbf{1}, h_{j}(-1)h_{s}(-1)\cdot\mathbf{1}, h_{m}(-2)\cdot\mathbf{1}|1\leq i,m\leq d, 1\leq j<s\leq d\}.$$
Let $\omega'=\sum\limits_{i=1}^{d}a_ih_{i}(-1)^2\cdot\mathbf{1}+\sum\limits_{1\leq i<j\leq d}c_{ij}h_{i}(-1)h_{j}(-1)\cdot\mathbf{1}
+\sum\limits_{i=1}^{d}b_ih_{i}(-2)\cdot\mathbf{1}$.  It follows from\cite{M,L} that $\omega'$ is a conformal vector preserving the 
$\N$-gradation (3.1) of the Heisenberg vertex algebra $ V_{\widehat{\h}}(1,0)$
 if and only if $L'(0)\omega'=2\omega'$ and $L'(0)|_{V_{\widehat{\h}}(1,0)_n}=n\on{Id}_{V_{\widehat{\h}}(1,0)_n}$, where 
 $Y(\omega',z)=\sum\limits_{n\in \Z}L'(0)z^{-n-2}$.
Hence we have
 $$L'(0)=\sum\limits_{i}^{d}a_i\sum\limits_{k\in \Z}:h_i(k)h_i(-k):+\sum\limits_{1\leq i<j\leq d}c_{ij}\sum\limits_{k\in \Z}:h_i(k)h_j(-k):+\sum\limits_{i}^{d}b_ih_i(0),$$
 where  $:\cdots :$ is the normal order.
 
 The equation $L'(0)\omega'=2\omega'$ is equivalent to the equations 
 \begin{eqnarray}
\label{3.3}
\left\{
\begin{array}{llllllll}
4a_{1}^2+c_{12}^2+\cdots+c_{1d}^2=2a_{1};\\
\ \cdots\ \ \ \ \ \cdots\ \ \ \ \ \ \cdots\\
c_{1d}^2+\cdots+c_{d-1d}^2+4a_{d}^2=2a_{d};\\
2a_{1}b_1+c_{12}b_2+\cdots+c_{1d}b_d=b_1;\\
c_{12}b_1+a_{2}b_2+\cdots+c_{2d}b_d=b_2;\\
\cdots\ \ \ \ \ \cdots\ \ \ \ \ \cdots\ \ \ \ \ \cdots\\
c_{1d}b_1+\cdots+c_{d-1d}b_{r-1}+2a_{d}b_d=b_d;\\
c_{i1}c_{1j}+\cdots+c_{ii-1}c_{i-1j}+2a_{i}c_{ij}+c_{ii+1}c_{i+1j}+\cdots+c_{ij-1}c_{j-1j}\\
+2c_{ij}a_{j}+c_{ij+1}c_{j+1j}+\cdots+c_{id}c_{dj}=c_{ij},\mbox{for}~ 1\leq i< j\leq d.\end{array}
\right.
\end{eqnarray}
 Since  $V_{\widehat{\h}}(1,0)$ is generated by 
 $V_{\widehat{\h}}(1,0)_1$ as a vertex algebra, then  
 $L'(0)|_{V_{\widehat{\h}}(1,0)_n}=n\on{Id}_{V_{\widehat{\h}}(1,0)_n}$ 
 is equivalent to 
 $ L'(0)|_{V_{\widehat{\h}}(1,0)_1}=\on{Id}_{V_{\widehat{\h}}(1,0)_1} $. 
 So we have, for each $l$,
 \[ L'(0)h_l(-1)\cdot \mathbf{1}=2a_lh_{l}(-1)\cdot \mathbf{1}
 +\sum_{1\leq i<j\leq d} (c_{il}h_i(-1)\cdot \mathbf{1}
 +c_{lj}h_j(-1)\cdot \mathbf{1})=h_{l}(-1)\cdot \mathbf{1}.\]
 Noting that $ c_{ij}=0$ unless $ i<j$, we get $a_l=\frac{1}{2}$  and 
 $c_{il}=0=c_{lj}$ for $1\leq i<l<j\leq d$. 
 Thus $\omega'$ is a conformal vector preserving the $\N$-gradation (3.1) of  $V_{\widehat{\h}}(1,0)$ if and only if 
 $\omega'=\frac{1}{2}\sum\limits_{i=1}^{d}h_{i}(-1)^2\cdot\mathbf{1}+\sum\limits_{i=1}^{d}b_ih_{i}(-2)\cdot\mathbf{1}$, i.e.,
 there exists a vector $h=\sum\limits_{i=1}^{d}b_ih_i$ such that $\omega'=\omega_0+h(-2)\cdot\mathbf{1}$.
 \end{proof}

\subsection{} 
For each  $h\in \h$, the resulted vertex operator algebra $(V_{\widehat{\h}}(1,0),\omega_h)$ has central charges $  c_{\omega_{h}}=d-12\langle h,h\rangle $ (see \cite[Examples 2.5.9]{BF}) and  thus can be non-isomorphic,  although the underlying vertex algebra 
$(V_{\widehat{\h}}(1,0),\,Y,\,\1)$ is same and they have the same conformal gradation.   By Lemma 3.1, we know the vertex algebra $(V_{\widehat{\h}}(1,0),\,Y,\,\1)$ has infinitely many non-isomorphic conformal structures. More precisely, we classify such conformal vectors up to isomorphism of vertex operator algebras.
\begin{lem}
For $h,h'\in\h$, the vertex operator algebra $(V_{\widehat{\h}}(1,0),\omega_h)$ is isomorphic to the vertex operator algebra $(V_{\widehat{\h}}(1,0),\omega_{h'})$ if and only if  there exists a linear  automorphism $\sigma: \h\rightarrow \h$ preserving the bilinear form $\<\cdot,\cdot\>$  such that $\sigma(h)=h'$.
\end{lem}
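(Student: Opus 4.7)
The plan is to establish the two implications separately, each by reducing everything to the fact that $V_{\widehat{\h}}(1,0)$ is generated as a vertex algebra by its weight-one part $V_{\widehat{\h}}(1,0)_1 \cong \h$.

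For the direction $(\Leftarrow)$, suppose $\sigma\in\on{O}(\h)$ satisfies $\sigma(h)=h'$. I would first lift $\sigma$ to a Lie algebra automorphism of $\hat{\h}$ by $x\otimes t^n\mapsto \sigma(x)\otimes t^n$ and $C\mapsto C$; compatibility with the bracket holds precisely because $\sigma$ preserves $\langle\cdot,\cdot\rangle$. Since this lift fixes $\hat{\h}^{\geq 0}$ and acts trivially on $e^0$, it induces an isomorphism $\Phi_\sigma$ of the induced module $V_{\widehat{\h}}(1,0)$, and by the strong reconstruction theorem $\Phi_\sigma$ is a vertex algebra automorphism. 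To check $\Phi_\sigma(\omega_h)=\omega_{h'}$, write $\omega_h=\omega_0+h(-2)\mathbf{1}$. Then $\Phi_\sigma(h(-2)\mathbf{1})=\sigma(h)(-2)\mathbf{1}=h'(-2)\mathbf{1}$, and $\Phi_\sigma(\omega_0)=\tfrac{1}{2}\sum_i\sigma(h_i)(-1)^2\mathbf{1}=\omega_0$ since $\{\sigma(h_i)\}$ is again an orthonormal basis and $\omega_0$ is independent of the orthonormal basis chosen.

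For the direction $(\Rightarrow)$, let $\Phi:(V_{\widehat{\h}}(1,0),\omega_h)\to (V_{\widehat{\h}}(1,0),\omega_{h'})$ be a VOA isomorphism. Because each $\omega_h$ induces the conformal gradation equal to the standard gradation (\ref{e3.1}) (as recalled preceding Proposition \ref{pp3.1}), $\Phi$ preserves this gradation, so it restricts to a linear automorphism $\sigma$ of $V_{\widehat{\h}}(1,0)_1\cong\h$. The Borcherds-type identity $x_1 y=\langle x,y\rangle \mathbf{1}$ for $x,y\in V_1$, combined with $\Phi(\mathbf{1})=\mathbf{1}$, forces $\langle\sigma(x),\sigma(y)\rangle=\langle x,y\rangle$; hence $\sigma\in\on{O}(\h)$.

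To conclude $\sigma(h)=h'$, I would invoke that $V_{\widehat{\h}}(1,0)$ is generated by $V_1$, so any vertex algebra homomorphism is determined by its restriction there. Consequently $\Phi$ must agree with the canonical lift $\Phi_\sigma$ produced in the first step, and the computation there yields $\Phi_\sigma(\omega_h)=\omega_{\sigma(h)}$. Comparing with the hypothesis $\Phi(\omega_h)=\omega_{h'}$ and using the manifest injectivity of $x\mapsto \omega_x=\omega_0+x(-2)\mathbf{1}$, one obtains $\sigma(h)=h'$. The argument is essentially mechanical; the only step needing care is the identification of the $L_h(0)$-grading with the standard grading (\ref{e3.1}), without which one could not conclude that $\Phi$ preserves $V_1$. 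This is guaranteed by Proposition \ref{pp3.1}, so no serious obstacle is expected.
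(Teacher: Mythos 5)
Your proposal is correct and follows essentially the same route as the paper: both directions reduce to the fact that $V_{\widehat{\h}}(1,0)$ is generated by $V_1\cong\h$, with the bilinear form recovered from $x_1y=\<x,y\>\mathbf{1}$ and the conformal vectors compared via the decomposition $\omega_h=\omega_0+h(-2)\mathbf{1}$. Your treatment is slightly more explicit about lifting $\sigma$ through the affine Lie algebra and about why $\Phi$ preserves the weight-one space, but the substance matches the paper's argument.
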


\begin{proof}
For $h,h'\in\h$,   if $(V_{\widehat{\h}}(1,0),\omega_h)$ is isomorphic to  $(V_{\widehat{\h}}(1,0),\omega_{h'})$ as vertex operator algebras, i.e., there is an automorphism $\sigma$ of $V_{\widehat{\h}}(1,0)$ such that $\sigma(\mathbf{1})=\mathbf{1}$ and $\sigma (\omega_h)=\omega_{h'}$.  For the orthonormal basis $h_1,\cdots,h_d$ of $\h$, we have 
$$\<\sigma(h_i),\sigma(h_j)\>\mathbf{1}=\sigma(h_i)_{1}\sigma(h_j)\mathbf{1}=\sigma((h_i)_1h_j)\mathbf{1})=\sigma(\<h_i,h_j\>\mathbf{1})=\delta_{ij},$$
where $i,j=1,\cdots, d$ and $\delta_{ij}=1$, if $i=j$; $\delta_{ij}=0$, otherwise. 
Hence the restriction to $V_{\widehat{\h}}(1,0)_1\cong \h$ of  $\sigma$ is an automorphism preserving the bilinear  form $\<\cdot,\cdot\>$ on $\h$  of $\h$. We still denote it by $\sigma$.  It follows from $\sigma (\omega_h)=\omega_{h'}$ that 
we get $\sigma(h)=h'$.

Conversely,  if there exists an automorphism $\sigma$ preserving the bilinear form $\<\cdot,\cdot\>$ on $\h$  of $\h$ such that $\sigma(h)=h'$,  since $V_{\widehat{\h}}(1,0)$ is generated by the subspace $V_{\widehat{\h}}(1,0)_1\cong \h$  as a vertex algebra. By defining $\sigma(\mathbf{1})=\mathbf{1}$,  we can extend $\sigma$ to the whole vertex algebra $V_{\widehat{\h}}(1,0)$.  We still denote it by $\sigma$. By straightly verifying $\sigma(\omega_0)=\omega_0$, we can get $\sigma(\omega_{h})=\omega_{\sigma(h)}=\omega_{h'}$. Thus $\sigma$ is an isomorphism 
from $(V_{\widehat{\h}}(1,0),\omega_h)$ to  $(V_{\widehat{\h}}(1,0),\omega_{h'})$ as vertex operator algebras.
\end{proof}
\begin{proofof}{Proof of Theorem \ref{t1.1}.}  
1) By the proof of the above lemma, we get immediately
the automorphism group of the vertex algebra $V_{\widehat{\h}}(1,0)$ preserving the standard gradation (\ref{e3.1})  is as follow 
$$\on{Aut}^{gr}_{VA}(V_{\widehat{\h}}(1,0))=\on{Aut}\<\h,(\cdot,\cdot)\>\cong \on{O}(\h), $$
where $\on{O}(\h)$ is the group of orthogonal transformations of $\h$.

2) For the standard Heisenberg vertex operator algebra$(V_{\widehat{\h}}(1,0),\,Y,\,\1,\,\w_{0})$,  by 1), we know its automorphism  group  $\on{Aut}(V_{\widehat{\h}}(1,0),\w_{0})=\on{O}(\h)$.

3) Define a map
$$
\begin{array}{llll}
\varphi: \on{Aut}(V_{\widehat{\h}}(1,0),\omega_h)\longrightarrow \on{O}(\h)\\
\hspace{3.4cm} \sigma \mapsto \sigma|_{\h},
\end{array}
$$
then we can check  $\varphi$ is a group isomorphism. Since $(V_{\widehat{\h}}(1,0),\omega_{h})$ is a vertex operator algebra 
generated by its weight-one subspace $\h\mbf{1}=V_{\widehat{\h}}(1,0)_{1}$ (up to isomorphism), then any automorphism $\sigma$ satisfies $\sigma(\h\mbf{1})=\h\mbf{1}$ and is uniquely determined by $\sigma|_{\h}$. Hence different automorphisms have different image under the map $\varphi$. Thus we know $\varphi$ is injective.

On the other hand, any automorphism $\widehat{\sigma}$ can be obtained by extending a $\sigma\in \on{O}(\h)$ and preserving conformal vector $\omega_{h}$. For any $\sigma\in \on{O}(\h)$ and an orthonormal basis $\{h_1,\cdots,h_d\}$ of $\h$, we have 
 $$
\begin{array}{llllll}
\sigma(\omega_{h})=\sigma(\omega_0+h(-2)\mathbf{1})=\omega_0+\sigma(h)(-2)\mathbf{1}
=\w_h.\end{array}
$$Hence   $\sigma(\w_h)=\w_h$ if and only if $\sigma(h)=h$.\qed
\end{proofof}

\begin{lem} \label{l3.3}  For $h\in \h$ with $\<h,h\>=1$, we have $$\on{Aut}(V_{\widehat{\h}}(1,0),\omega_{h})=\on{O}(\h)_{h}\cong  \{T\in\on{O}_{d}(\C)|
T=
\left(\begin{array}{ll}
1 \ \ 0\ \ \\
0 \ \ T_1
\end{array}
\right), T_1\in \on{O}_{d-1}(\C)\}\cong  \on{O}_{d-1}(\C).
$$
\end{lem}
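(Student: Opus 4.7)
The plan is to deduce this lemma from Theorem \ref{t1.1}, which already identifies $\on{Aut}_{VOA}(V_{\widehat{\h}}(1,0),\omega_h)$ with the isotropy subgroup $\on{O}(\h)_h$. So the content of the lemma reduces to a purely linear-algebraic statement about the stabilizer of a unit-length vector in the complex orthogonal group, and the normalization $\langle h,h\rangle=1$ is exactly what makes this clean.

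First I would invoke Theorem \ref{t1.1}(2) to get the identification $\on{Aut}(V_{\widehat{\h}}(1,0),\omega_h)=\on{O}(\h)_h$. Next, because $\langle h,h\rangle=1\neq 0$, the vector $h$ is non-isotropic, so $\C h$ is a regular one-dimensional subspace and $\h=\C h\oplus h^{\perp}$ is an orthogonal decomposition with $h^{\perp}$ a regular subspace of dimension $d-1$. I would then extend $h$ to an orthonormal basis $\{h_1=h,h_2,\ldots,h_d\}$ of $\h$, where $\{h_2,\ldots,h_d\}$ is any orthonormal basis of $h^{\perp}$ (such a basis exists by the usual Gram–Schmidt argument for a nondegenerate symmetric bilinear form over $\C$).

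With respect to this basis I identify $\on{O}(\h)\cong\on{O}_d(\C)$. For $\sigma\in\on{O}(\h)_h$, the condition $\sigma(h_1)=h_1$ says the first column of the matrix of $\sigma$ is $e_1$. Orthogonality $\sigma^T\sigma=I$ then forces the first row to be $e_1^T$ as well, since the inner product of the first column with any other column must vanish. Consequently the matrix has block form
$$T=\begin{pmatrix} 1 & 0 \\ 0 & T_1 \end{pmatrix},\qquad T_1\in\on{O}_{d-1}(\C),$$
and conversely every such block matrix clearly lies in $\on{O}(\h)_h$. The map $T\mapsto T_1$ is then a group isomorphism $\on{O}(\h)_h\cong\on{O}_{d-1}(\C)$.

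There is no real obstacle here; the only point requiring a sentence of care is the existence of an orthonormal basis of $\h$ extending $h$, which uses that $\langle h,h\rangle=1$ (so $h$ is a unit vector) together with the nondegeneracy of $\langle\cdot,\cdot\rangle$ on $h^{\perp}$. Once that is noted, the block decomposition is immediate from orthogonality.
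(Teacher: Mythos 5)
Your proposal is correct and follows essentially the same route as the paper: both reduce to the stabilizer $\on{O}(\h)_h$ via Theorem \ref{t1.1}, extend $h$ to an orthonormal basis using $\<h,h\>=1$, and then use the orthogonality relation on the partitioned matrix to force the block form $\left(\begin{smallmatrix}1&0\\0&T_1\end{smallmatrix}\right)$ with $T_1\in\on{O}_{d-1}(\C)$. Your explicit remark about why the orthonormal extension exists (nondegeneracy on $h^{\perp}$) is a point the paper leaves implicit, but the argument is the same.
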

\begin{proof}
Since $\<h,h\>=1$, it can be extended to an orthonormal basis $h=h_1,h_2,\cdots,h_d$ of $\h$.
Let $T=\left(\begin{array}{llll}
t_{11}\  t_{12}\ \cdots\  t_{1d}\\
t_{21}\  t_{22}\ \cdots\  t_{2d} \\ 
\cdots\ \cdots\ \cdots\ \cdots\\
t_{d1}\ t_{d2}\ \cdots\  t_{dd}
\end{array}
\right)\in \on{Aut}(V_{\widehat{\h}}(1,0),\omega_{h})$. Then $T\in \on{O}_{d}(\C)$ and $$T
\left(\begin{array}{ll}
1\\
0\\
\vdots\\
0
\end{array}
\right)
=\left(\begin{array}{ll}
1\\
0\\
\vdots\\
0
\end{array}
\right).$$
We suppose that $T=\left(\begin{array}{llll}
t_{11}\ \alpha\\
\beta \  \ \ T_1 \end{array}
\right)
$ and $\Lambda=
\left(\begin{array}{llll}
1\\
O
\end{array}
\right)$, where $\alpha=(t_{12},\cdots,t_{1d}), \beta=(t_{21},\cdots,t_{d1})^{tr}$ and $ O=(0,\cdots,0)^{tr}$ is the $d-1$ dimensional column vector.
Using methods of partioned matrices, we have 
$T\Lambda=\left(\begin{array}{llll}
t_{11}\ \alpha\\
\beta \  \ \ t_1 \end{array}
\right)\left(\begin{array}{llll}
1\\
O
\end{array}
\right)=\left(\begin{array}{llll}
t_{11}\\
\beta
\end{array}
\right)=\left(\begin{array}{llll}
1\\
O
\end{array}
\right)$, then $t_{11}=1,\beta=O$.
Since $T\in \on{O}_{d}(\C)$, then $\alpha=O^{tr}$. Hence $T=\left(\begin{array}{llll}
1 \ \ \ O\\
O\  \ \ T_1 \end{array}
\right)
$ and $T_1\in \on{O}_{d-1}(\C)$.
\end{proof}
\begin{cor}
For any $h\in \h $ with $\<h,h\>\neq 0$,  we have $\on{Aut}(V_{\widehat{\h}}(1,0),\omega_{h})
\cong  \on{O}_{d-1}(\C)$.
\end{cor}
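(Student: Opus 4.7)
The plan is to reduce the statement to Lemma \ref{l3.3} by a simple rescaling argument, since that lemma already handles the normalized case $\langle h,h\rangle = 1$. By Theorem \ref{t1.1}(2), $\on{Aut}_{VOA}(V_{\widehat{\h}}(1,0),\omega_{h})$ is the isotropy subgroup $\on{O}(\h)_{h}$, so the claim reduces to identifying this isotropy subgroup when $\langle h,h\rangle\neq 0$.

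First I would observe that for any nonzero scalar $\mu\in\C^{\times}$ and any $\sigma\in\on{O}(\h)$, linearity of $\sigma$ gives $\sigma(h)=h$ if and only if $\sigma(\mu h)=\mu h$; hence the isotropy subgroups of $h$ and of $\mu h$ coincide in $\on{O}(\h)$, that is, $\on{O}(\h)_{h}=\on{O}(\h)_{\mu h}$.

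Next, since $\langle h,h\rangle\in\C^{\times}$ and $\C$ is algebraically closed, I can choose $\mu\in\C^{\times}$ with $\mu^{2}\langle h,h\rangle =1$, and put $h'=\mu h$. Then $\langle h',h'\rangle=\mu^{2}\langle h,h\rangle=1$ while $\on{O}(\h)_{h}=\on{O}(\h)_{h'}$ by the previous step. Applying Lemma \ref{l3.3} to $h'$ yields $\on{O}(\h)_{h'}\cong\on{O}_{d-1}(\C)$, and combining with Theorem \ref{t1.1}(2) gives the desired conclusion.

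No serious obstacle arises here: the only substantive point is the existence of a complex square root of $\langle h,h\rangle^{-1}$, which is automatic over $\C$. One should however note that this argument does \emph{not} assert that the vertex operator algebras $(V_{\widehat{\h}}(1,0),\omega_{h})$ and $(V_{\widehat{\h}}(1,0),\omega_{h'})$ are isomorphic as VOAs (they are not, in general, since their central charges $d-12\langle h,h\rangle$ and $d-12\langle h',h'\rangle$ differ), but only that their automorphism groups, viewed as subgroups of $\on{O}(\h)$ via the identification of Theorem \ref{t1.1}, coincide.
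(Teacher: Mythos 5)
Your proof is correct and follows essentially the same route as the paper: both rescale $h$ to a vector of unit square-norm and invoke Lemma \ref{l3.3}, the only substantive point being that the $\on{O}(\h)$-stabilizers of $h$ and of $\mu h$ coincide. Your write-up is in fact slightly more careful than the paper's, since you make that stabilizer identification explicit and correctly flag that the underlying vertex operator algebras $(V_{\widehat{\h}}(1,0),\omega_h)$ and $(V_{\widehat{\h}}(1,0),\omega_{h'})$ need not be isomorphic even though their automorphism groups agree inside $\on{O}(\h)$.
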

\begin{proof} 
We first choose  an orthonormal basis $\{h_1,\cdots,h_d\}$ of $\h$. For any $h\in \h $ with $\<h,h\>\neq 0$, 
we denote $\xi_1=\frac{h}{\sqrt{\<h,h\>}}$ and $\<\xi_1,\xi_1\>=1$. Then $\xi_1$ can be extended  to an orthonormal basis $\xi_1,\xi_2, \cdots,\xi_d$ of $\h$.  Thus, by  Lemma \ref{l3.3}, we know that $\on{Aut}(V_{\widehat{\h}}(1,0),\omega_{h})=\on{Aut}(V_{\widehat{\h}}(1,0),\omega_{h_1})\cong  \on{O}_{d-1}(\C)$.
\end{proof}

The following lemma is well known in linear algebra, we include a proof for convenience. 
\begin{lem}\label{l4.16}
Assume that $\on{dim}\h>1$,  if two non-zero vectors $\beta,\gamma\in \h$ satisfy  $\< \beta,\beta\>=\<\gamma,\gamma\>=0$, then there exists an orthogonal transformation $\sigma$ of $\h$ such that $\sigma(\beta)=\gamma$.
\end{lem}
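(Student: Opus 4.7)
The plan is to exhibit $\sigma$ via the standard hyperbolic-plane construction: enlarge each of $\beta$ and $\gamma$ to an isotropic basis of a hyperbolic plane, send one plane to the other by the obvious isometry, and extend by an isometry between the orthogonal complements (which exists since over $\C$ any two nondegenerate symmetric forms of equal dimension are equivalent).

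More concretely, I would first produce a ``hyperbolic partner'' $\beta^{*}$ for $\beta$ as follows. Since $\langle\cdot,\cdot\rangle$ is nondegenerate and $\beta\neq 0$, there is some $u\in\h$ with $\langle \beta,u\rangle\neq 0$; rescaling, assume $\langle\beta,u\rangle=1$. Then the vector
\[
\beta^{*}:=u-\tfrac{1}{2}\langle u,u\rangle\,\beta
\]
satisfies $\langle \beta,\beta^{*}\rangle=1$ and $\langle\beta^{*},\beta^{*}\rangle=0$. So $H_{\beta}:=\mathrm{span}\{\beta,\beta^{*}\}$ is a regular $2$-dimensional subspace (a hyperbolic plane). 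Repeat this construction for $\gamma$ to obtain $\gamma^{*}$ with the analogous properties and a hyperbolic plane $H_{\gamma}=\mathrm{span}\{\gamma,\gamma^{*}\}$.

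Next, since $H_{\beta}$ and $H_{\gamma}$ are regular, we get orthogonal decompositions $\h=H_{\beta}\oplus H_{\beta}^{\perp}=H_{\gamma}\oplus H_{\gamma}^{\perp}$, and both $H_{\beta}^{\perp}$ and $H_{\gamma}^{\perp}$ are regular subspaces of dimension $d-2$. Over $\C$ any two nondegenerate symmetric bilinear forms of the same dimension are isometric (they are both equivalent to the standard form via diagonalization and rescaling by square roots), so there exists an isometry $\tau:H_{\beta}^{\perp}\to H_{\gamma}^{\perp}$. Now define $\sigma\in\mathrm{End}(\h)$ by
\[
\sigma(\beta)=\gamma,\qquad \sigma(\beta^{*})=\gamma^{*},\qquad \sigma|_{H_{\beta}^{\perp}}=\tau.
\]
A direct check on the basis $\{\beta,\beta^{*}\}\cup(\text{basis of }H_{\beta}^{\perp})$ shows that $\sigma$ preserves the form: the block on $H_{\beta}$ is preserved because the Gram matrices of $\{\beta,\beta^{*}\}$ and $\{\gamma,\gamma^{*}\}$ are both $\bigl(\begin{smallmatrix}0&1\\1&0\end{smallmatrix}\bigr)$, the block on $H_{\beta}^{\perp}$ is preserved by construction, and the cross terms vanish because $\sigma(H_{\beta})=H_{\gamma}$ and $\sigma(H_{\beta}^{\perp})=H_{\gamma}^{\perp}$ are orthogonal. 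Hence $\sigma\in\mathrm{O}(\h)$ with $\sigma(\beta)=\gamma$, as desired.

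The hypothesis $\dim\h>1$ is used exactly to guarantee the existence of a vector $u$ pairing nontrivially with the isotropic vector $\beta$ (a one-dimensional space carrying a nondegenerate symmetric form cannot contain any nonzero isotropic vector). No real obstacle arises; the only mild care is to verify that $\beta^{*}\notin\C\beta$ so that $H_{\beta}$ is genuinely two-dimensional, which follows immediately from $\langle\beta,\beta^{*}\rangle=1\neq0=\langle\beta,\beta\rangle$. Alternatively, one may bypass the explicit construction and simply invoke Witt's extension theorem for the obvious isometry $\C\beta\to\C\gamma$, $\beta\mapsto\gamma$; the argument above is in effect a hands-on proof of that special case.
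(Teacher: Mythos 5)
Your proof is correct and follows essentially the same route as the paper's: both arguments produce a hyperbolic partner for each isotropic vector, split off the resulting hyperbolic plane, and extend the obvious isometry of planes by an isometry of the orthogonal complements (the paper does this last step by passing to orthonormal bases, you by invoking the equivalence of nondegenerate complex forms of equal dimension, which is the same fact). Your explicit formula $\beta^{*}=u-\tfrac{1}{2}\langle u,u\rangle\beta$ supplies a detail the paper only asserts, but the underlying argument is identical.
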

\begin{proof} 
  For the nonzero vector $\beta$ with $\<\beta,\beta\>=0$, there exists a vector $\beta'\in \h$  such that $\beta$ and $\beta'$ are linearly independent and $\<\beta',\beta'\>=0, \<\beta,\beta'\>=1$.
Then we get two vectors $\xi_1=\frac{\beta+\beta'}{\sqrt{2}},\xi_2=\frac{\beta-\beta'}{\sqrt{-2}}\in \h$ with  $\<\xi_i,\xi_j\>=\delta_{ij}$ for $i,j=1,2$.  So $\xi_1$ and $\xi_2$ can be extended to an orthonormal basis  $\{\xi_1,\xi_2, \cdots, \xi_d\}$ of $\h$. Similarly, there exists a vector $\gamma'\in \h$ such that $\gamma$ and $\gamma'$ are linearly independent and $\<\gamma',\gamma'\>=0, \<\gamma,\gamma'\>=1$ for the nonzero vector $\gamma$ with $\<\gamma,\gamma\>=0.$ Then we get two vectors $\eta_1=\frac{\gamma+\gamma'}{\sqrt{2}},\eta_2=\frac{\gamma-\gamma'}{\sqrt{-2}}\in \h$ with  $\<\eta_i,\eta_j\>=\delta_{ij}$ for $i,j=1,2$. So $\eta_1$ and $\eta_2$ can be also extended to an orthonormal basis  $\{\eta_1,\eta_2, \cdots, \eta_d\}$ of $\h$. According to the standard theory of linear algebra, we can define an orthogonal transformation $\rho$ of $\h$ such that  $\rho(\xi_s)=\eta_s$ for $s=1,\cdots,d$.  In particular,  $\rho(\beta)=\gamma$.
\end{proof}

Let $C(\h)$ be the cone of all isotropic vectors in $ \h$ (i.e., $ h\in \h$ such that $\< h, h\>=0$). Then $ \on{O}(\h)$ acts on $ C(\h)$ with exactly two orbits $\{0\}$ and $\{c\in C(\h)|c\neq 0\}$. 

$\on{O}(\h)$ acts on the complement $ \h\setminus C(\h)$ and there is an $ \on{O}(\h)$-equivariant map $  \h\setminus C(\h)\rightarrow \on{Reg}^1(\h)$ which is a trivial $ \mathbb{C}^*$-bundle.  Here $\on{Reg}^1(\h)$ is the space of all one-dimensional regular subspaces of $\h$, on which $\on{O}(\h)$ acts transitively. Hence  $\on{Reg}^1(\h)$ is one to one correspondence to $\mathbb{C}^*/\{\pm1\}$. Thus we have the following conclusion 
\begin{theorem} Its isomorphism classes of conformal structures on $V_{\widehat{\h}}(1,0)$ is in one-to-on correspondence to the 
$\{0, c\}\cup \mathbb{C}^*/\{\pm 1\}$. Here $c$ is an nonzero isotropic vector in $\h$. 
\end{theorem}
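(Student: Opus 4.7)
The plan is to derive the theorem as a direct consequence of the Corollary to Theorem~\ref{t1.1}, which already identifies the moduli space of conformal structures preserving the standard gradation with $\h/\on{O}(\h)$. Hence it suffices to enumerate the $\on{O}(\h)$-orbits in $\h$ and repackage them as $\{0,c\}\cup \C^*/\{\pm 1\}$.

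First I would split $\h$ into three $\on{O}(\h)$-stable pieces: the origin $\{0\}$, the punctured isotropic cone $C(\h)\setminus\{0\}$, and the open complement $\h\setminus C(\h)$. The origin contributes the point $0$. For the isotropic piece, Lemma~\ref{l4.16} shows that any two nonzero isotropic vectors are conjugate under an orthogonal transformation, so $C(\h)\setminus\{0\}$ is a single orbit, contributing the point $c$.

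For the non-isotropic piece I would use the $\on{O}(\h)$-equivariant line map $\h\setminus C(\h)\to \on{Reg}^1(\h)$, $h\mapsto \C h$. Any one-dimensional regular subspace is spanned by a unit vector, and unit vectors form a single $\on{O}(\h)$-orbit (any orthonormal basis can be mapped to any other), so $\on{O}(\h)$ acts transitively on $\on{Reg}^1(\h)$. Thus orbits of $\on{O}(\h)$ on $\h\setminus C(\h)$ are in bijection with orbits on a fixed regular line $L=\C e$ (with $\<e,e\>=1$) under the stabilizer $\on{Stab}_{\on{O}(\h)}(L)\supseteq \on{O}(L)\times \on{O}(L^{\perp})$. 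Since $\on{O}(L^{\perp})$ fixes $L$ pointwise while $\on{O}(L)=\{\pm 1\}$ acts on $L\setminus\{0\}\cong \C^*$ by $\lambda\mapsto\pm\lambda$, one obtains the identification
\[
(\h\setminus C(\h))/\on{O}(\h)\;\cong\; \C^*/\{\pm 1\},
\]
equivalently parameterized by the scalar invariant $\<h,h\>=\lambda^2$.

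Combining the three pieces gives the disjoint union $\{0,c\}\cup \C^*/\{\pm 1\}$, and the bijection with isomorphism classes of conformal structures on $V_{\widehat{\h}}(1,0)$ is then immediate from the Corollary to Theorem~\ref{t1.1}. There is no substantive obstacle: the three strata are separated by the invariants $\<h,h\>=0$ vs $\<h,h\>\neq 0$ and, within the first, by $h=0$ vs $h\neq 0$, so no identification between strata can occur. The only verification worth writing out in the body of the proof is the transitivity of $\on{O}(\h)$ on unit vectors (extending each to an orthonormal basis and letting $\sigma$ permute them), since the rest is either a direct reference to Lemma~\ref{l4.16} or a standard computation of the stabilizer action on $L$.
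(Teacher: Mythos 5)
Your proposal is correct and follows essentially the same route as the paper: the paper's own justification is the paragraph preceding the theorem, which likewise stratifies $\h$ into $\{0\}$, the punctured isotropic cone (a single orbit by Lemma~\ref{l4.16}), and $\h\setminus C(\h)$ fibered $\on{O}(\h)$-equivariantly over $\on{Reg}^1(\h)$, yielding $\C^*/\{\pm 1\}$. Your only addition is spelling out the stabilizer computation on a fixed regular line, which the paper leaves implicit.
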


\section{Semi-conformal vectors of the Heisenberg vertex operator algebra  $(V_{\widehat{\h}}(1,0),\omega_h)$}
\subsection{}
 Note that the computations in \cite[2.5.9]{BF} shows 
that $L^{\omega_{h}}(0)=(\omega_{h})_1$ is always the degree operator. Hence the gradation on $V_{\widehat{\h}}(1,0)$ are the same for all $h\in \h$ and $ V_{\widehat{\h}}(1,0)_{n}$ and  is independent of the choice of $h$. 
Fixing an orthonormal basis $h_1,\cdots,h_d$ of $\h$, we know that $V_{\widehat{\h}}(1,0)_2=\h[-1]\otimes \h[-1]\oplus \h[-2]$  has a basis
\begin{equation}
\label{f3.1}\{h_{i}(-1)h_{j}(-1)\cdot\mathbf{1}; h_{k}(-2)\cdot\mathbf{1}|1 \leq i\leq j\leq d, k=1,\cdots, d\}.\end{equation}
Let \begin{equation}\label{f3.2}\omega'=\sum_{1 \leq i\leq j\leq d}a_{ij}h_{i}(-1)h_{j}(-1)\cdot\mathbf{1}+\sum_{i=1}^{d}b_ih_{i}(-2)\cdot\mathbf{1}\in V_{\widehat{\h}}(1,0)_2.\end{equation} Then there exists a unique symmetric matrix
\begin{align}\label{f3.3}
A_{\omega'}=
\left(\begin{array}{cccccc}
&2a_{11} &a_{12} &\cdots &a_{1d}\\
&a_{12}& 2a_{22}&\cdots& a_{2d}\\
&\cdots&\cdots&\cdots&\cdots\\
&a_{1d}& \cdots &a_{d-1d} &2a_{dd}
\end{array}\right)
\end{align}
and a column vector
$B_{\omega'}=(b_1,\cdots,b_d)^{tr}$ with entries in $\C$ such that
\begin{eqnarray}
\begin{array}{llll}
\omega'&=\frac{1}{2}(h_1(-1),\cdots, h_d(-1))A_{\omega'}
\left(\begin{array}{c}
h_1(-1)\\
~~~~~~\vdots\\
 h_d(-1)
\end{array}\right)\cdot\mathbf{1}+(h_1(-2),\cdots,h_d(-2)) B_{\omega'}
\1.
\end{array}
\end{eqnarray}
Assume that $$h=(h_1,\cdots, h_d)
\left(\begin{array}{ll}
\lambda_1\\
\vdots\\
\lambda_d
\end{array}
\right)
=(h_1,\cdots, h_d)\Lambda,
$$we have 
\begin{prop}\label{p3.1}\cite[Proposition 3.1]{CL}
$\omega'\in \on{Sc}(V_{\widehat{\h}}(1,0),\omega_{h})$ if and only if $A_{\omega'},B_{\omega'}$ satisfy
$$A_{\omega'}^{tr}=A_{\omega'},\; A_{\omega'}^2=A_{\omega'},\; A_{\omega'}\Lambda=B_{\omega'},
$$ where $A^{tr}$ is the transpose of  $A$.
\end{prop}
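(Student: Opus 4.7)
The plan is to invoke the algebraic characterization of semi-conformal vectors recalled from \cite{CL}: $\omega'\in V_2$ is semi-conformal with respect to $\omega_h$ if and only if $\omega'$ is itself a conformal vector and the compatibilities $(\omega_h)_n\omega' = (\omega')_n\omega'$ hold for all $n\geq 0$. Since both $\omega_h$ and $\omega'$ lie in $V_{\widehat{\h}}(1,0)_2$, the product $(\omega_h)_n\omega'$ lives in $V_{3-n}$ and vanishes for $n\geq 4$. So the semi-conformal condition reduces to a finite list of identities at $n = 0, 1, 2, 3$, each of which becomes an equality between explicit elements of $V_{\widehat{\h}}(1,0)$ that can be decoded coordinate-wise in the basis (\ref{f3.1}).

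The symmetry $A_{\omega'}^{tr} = A_{\omega'}$ is built into the normalization (\ref{f3.3}) once one uses $h_i(-1)h_j(-1)\cdot\1 = h_j(-1)h_i(-1)\cdot\1$. The $n = 0$ identity $(\omega_h)_0\omega' = (\omega')_0\omega'$ is automatic since for any conformal vector $\omega'$ of a subalgebra one has $(\omega')_0\omega' = \partial \omega' = (\omega_h)_0\omega'$, so it imposes no new condition. The key identity is the $n = 1$ equation $L'(0)\omega' = 2\omega' = L(0)\omega'$. Computing $(\omega')_1\omega'$ directly from the Heisenberg brackets $[h_i(m), h_j(n)] = m\delta_{ij}\delta_{m+n,0}$ and matching coefficients of the basis (\ref{f3.1}) separates into two matrix equations: the quadratic part (the coefficients of $h_i(-1)h_j(-1)\cdot\1$) yields exactly $A_{\omega'}^2 = A_{\omega'}$, while the linear part (coefficients of $h_k(-2)\cdot\1$) yields $A_{\omega'} B_{\omega'} = B_{\omega'}$.

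The coupling $A_{\omega'}\Lambda = B_{\omega'}$ then emerges when one carefully tracks the contribution of the $h(-2)\cdot\1$ summand of $\omega_h$ to the cross-term in $(\omega_h)_n\omega' - (\omega')_n\omega'$ at $n = 1$: the requirement that the $h_k(-2)\cdot\1$ coefficients agree on both sides forces $B_{\omega'}$ to coincide with $A_{\omega'}\Lambda$, which then makes $A_{\omega'}B_{\omega'} = B_{\omega'}$ redundant with $A_{\omega'}^2 = A_{\omega'}$. Finally one checks the remaining $n=2, 3$ identities: both sides vanish or collapse to multiples of $\mathbf{1}$ whose equality follows from the relations $A_{\omega'}^{tr} = A_{\omega'}$, $A_{\omega'}^2 = A_{\omega'}$, $A_{\omega'}\Lambda = B_{\omega'}$ already obtained, confirming that this list is both necessary and sufficient.

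The principal obstacle is the bookkeeping in the computation of $(\omega')_1\omega'$, which, before normal-ordering reductions, is a quartic expression in Heisenberg modes. The cleanest strategy is to package $\omega'$ matricially as
\[
\omega' = \tfrac{1}{2}\,\mathbf{h}(-1)^{tr} A_{\omega'}\,\mathbf{h}(-1)\cdot\1 + \mathbf{h}(-2)^{tr} B_{\omega'}\cdot\1,
\]
where $\mathbf{h}(-k) = (h_1(-k),\dots,h_d(-k))^{tr}$, and exploit the Wick-type contraction formula for the bracket of two quadratic Heisenberg expressions. This reduces the verification entirely to linear algebra on the matrix $A_{\omega'}$ and the vectors $B_{\omega'}, \Lambda$, after which the stated three conditions fall out transparently.
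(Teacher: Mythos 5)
The paper itself gives no proof of this proposition --- it is quoted verbatim from \cite[Proposition 3.1]{CL} --- so your proposal has to be judged against the standard mode computation, whose framework you have set up correctly: reduce semi-conformality to the finitely many identities $(\omega_h)_n\omega'=(\omega')_n\omega'$ for $n=0,1,2,3$ (all higher products vanish by the grading), together with the Virasoro conditions on $\omega'$, and decode each identity in the basis (\ref{f3.1}). Your analysis of the $n=1$ level is also correct: $L'(0)\omega'=2\omega'$ gives $A_{\omega'}^2=A_{\omega'}$ from the $h_i(-1)h_j(-1)\mbf{1}$ coefficients and $A_{\omega'}B_{\omega'}=B_{\omega'}$ from the $h_k(-2)\mbf{1}$ coefficients.

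The gap is in your derivation of $A_{\omega'}\Lambda=B_{\omega'}$. You claim it emerges at $n=1$ by tracking the contribution of the $h(-2)\mbf{1}$ summand of $\omega_h$, and you then dismiss the $n=2,3$ identities as automatic consequences of the relations already obtained. But the $h(-2)\mbf{1}$ summand of $\omega_h$ contributes to $L(m)$ the operator $-(m+1)\sum_k\lambda_k h_k(m)$, which for $m=0$ is a combination of the zero modes $h_k(0)$ and hence annihilates all of $V_{\widehat{\h}}(1,0)$. So $\Lambda$ is invisible at the $n=1$ level, and that level can only ever produce $A_{\omega'}B_{\omega'}=B_{\omega'}$, which is strictly weaker than $A_{\omega'}\Lambda=B_{\omega'}$: already for $d=1$, taking $A=(1)$, $B=(b)$, $\Lambda=(\lambda)$ with $b\neq\lambda$ satisfies $A^2=A$, $A^{tr}=A$, $AB=B$, yet the corresponding $\omega'$ is not semi-conformal in $(V_{\widehat{\h}}(1,0),\omega_h)$. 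The missing equation actually comes from the $n=2$ identity $L(1)\omega'=L'(1)\omega'$: a direct bracket computation gives $L'(1)\omega'=0$ identically, while $L(1)\omega'=2\sum_k\bigl(B_{\omega'}-A_{\omega'}\Lambda\bigr)_k h_k(-1)\mbf{1}$, because the term $-2\sum_k\lambda_k h_k(1)$ of $L(1)$ acts nontrivially on the quadratic part of $\omega'$. Thus the $n=2$ identity is not redundant --- it is the sole source of $A_{\omega'}\Lambda=B_{\omega'}$, which in turn makes $A_{\omega'}B_{\omega'}=B_{\omega'}$ redundant; only the $n=3$ identity genuinely collapses to a scalar relation implied by the others. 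As written, following your plan literally yields only $A^2=A$, $A^{tr}=A$, $AB=B$ and the proof fails; relocating the derivation of $A\Lambda=B$ to the $L(1)$ level repairs it.
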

Let $\on{SymId}_{h}=\{(A, B)|A^2=A,A^{tr}=A, A\Lambda=B\}$. Then we have 
\begin{cor}\label{c3.5}
The map $\omega'\mapsto (A,B)$ gives a bijection  between $\on{Sc}(V_{\widehat{\h}}(1,0),\omega_h)$ and $\on{SymId}_{h}$ as sets.
\end{cor}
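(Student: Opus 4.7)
The plan is to deduce this corollary directly from Proposition \ref{p3.1} once we know that every semi-conformal vector sits in the weight-two subspace, and observe that the coordinate assignment $\omega'\mapsto (A_{\omega'},B_{\omega'})$ already establishes a bijection between $V_{\widehat{\h}}(1,0)_{2}$ and the space of pairs $(A,B)$ with $A$ symmetric and $B\in\C^{d}$ arbitrary.

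First I would verify the containment $\on{Sc}(V_{\widehat{\h}}(1,0),\omega_{h})\subseteq V_{\widehat{\h}}(1,0)_{2}$. If $\omega'$ is semi-conformal, it is the conformal vector of a vertex operator subalgebra $U$ with $\omega_{n}|_{U}=\omega'_{n}|_{U}$ for all $n\geq 0$. Taking $n=1$ and applying to $\omega'\in U$ gives
\begin{equation*}
L(0)\omega'=\omega_{1}\omega'=\omega'_{1}\omega'=L'(0)\omega'=2\omega',
\end{equation*}
so $\omega'$ lies in the $L(0)$-eigenspace of eigenvalue $2$, which is $V_{\widehat{\h}}(1,0)_{2}$.

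Second, since (\ref{f3.1}) is a basis of $V_{\widehat{\h}}(1,0)_{2}$, every element $\omega'\in V_{\widehat{\h}}(1,0)_{2}$ has the unique expansion (\ref{f3.2}), from which the symmetric matrix $A_{\omega'}$ in (\ref{f3.3}) and the column vector $B_{\omega'}$ are uniquely determined. Conversely, given any symmetric $d\times d$ matrix $A$ and any $B\in\C^{d}$, formula (\ref{f3.3}) produces a unique $\omega'\in V_{\widehat{\h}}(1,0)_{2}$ with $A_{\omega'}=A$ and $B_{\omega'}=B$. Thus the assignment is already a set-theoretic bijection between $V_{\widehat{\h}}(1,0)_{2}$ and
\begin{equation*}
\{(A,B)\mid A\in M_{d\times d}(\C),\; A^{tr}=A,\; B\in\C^{d}\}.
\end{equation*}

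Finally, restricting this bijection to the subset $\on{Sc}(V_{\widehat{\h}}(1,0),\omega_{h})\subseteq V_{\widehat{\h}}(1,0)_{2}$, Proposition \ref{p3.1} tells us that the image is precisely the pairs $(A,B)$ additionally satisfying $A^{2}=A$ and $A\Lambda=B$, i.e., the set $\on{SymId}_{h}$. This yields the desired bijection. There is no real obstacle here; the corollary is essentially a bookkeeping consequence of Proposition \ref{p3.1}, with the only non-formal step being the weight-two confinement above.
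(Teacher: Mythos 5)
Your proof is correct and follows the same route the paper intends: the corollary is an immediate bookkeeping consequence of Proposition \ref{p3.1} together with the bijection between weight-two vectors and pairs $(A,B)$ with $A$ symmetric. Your explicit verification that $\on{Sc}(V_{\widehat{\h}}(1,0),\omega_h)\subseteq V_{\widehat{\h}}(1,0)_2$ via $L(0)\omega'=L'(0)\omega'=2\omega'$ is a point the paper leaves implicit, and it is a worthwhile addition.
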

\begin{prop}
 $\on{Sc}(V_{\widehat{\h}}(1,0),\omega_{h})$ is $\on{Aut}(V_{\widehat{\h}}(1,0),\omega_h)$-invariant.
\end{prop}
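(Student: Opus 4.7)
The plan is to verify the invariance directly from the definition of a semi-conformal vector and the fact that $\sigma$ fixes $\omega_{h}$. Given $\sigma \in \on{Aut}(V_{\widehat{\h}}(1,0),\omega_h)$ and $\omega' \in \on{Sc}(V_{\widehat{\h}}(1,0),\omega_h)$, I would first unpack the semi-conformality of $\omega'$: by definition there is a vertex operator subalgebra $(U,\omega')$ of $V:=V_{\widehat{\h}}(1,0)$ such that $(\omega_h)_n u = \omega'_n u$ for every $u\in U$ and every $n\geq 0$.

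Next, I would push this data forward under $\sigma$. Because $\sigma$ is a vertex algebra automorphism, $\sigma(U)$ is again a vertex subalgebra of $V$, and $\sigma(\omega')$ serves as its conformal vector: the Virasoro relations for the modes of $\omega'$ on $U$, the grading relation $L'(-1)=\omega'_0$, and the non-negative-mode identities all transfer under $\sigma$ via $\sigma(a_n b)=\sigma(a)_n\sigma(b)$. To check the semi-conformal condition for $\sigma(\omega')$ with respect to $\sigma(U)$, I would compute, for any $v=\sigma(u)\in \sigma(U)$ and any $n\geq 0$,
\[
(\omega_h)_n v \;=\; \sigma(\omega_h)_n\sigma(u) \;=\; \sigma\bigl((\omega_h)_n u\bigr) \;=\; \sigma(\omega'_n u) \;=\; \sigma(\omega')_n v,
\]
using crucially that $\sigma(\omega_h)=\omega_h$ since $\sigma\in \on{Aut}(V,\omega_h)$. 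This says exactly that $\sigma(\omega')\in \on{Sc}(V,\omega_h)$, so the set is $\on{Aut}(V,\omega_h)$-stable.

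There is no real obstacle here; the argument is entirely formal once one writes down the definitions, and the only input used about $\sigma$ is that it is an automorphism of vertex algebras fixing $\omega_h$. As a sanity check I would also verify the conclusion matrix-theoretically via Corollary \ref{c3.5}: the restriction $\sigma|_{\h}\in \on{O}(\h)_h$ sends the pair $(A_{\omega'},B_{\omega'})$ to the pair $(\sigma A_{\omega'}\sigma^{-1},\sigma B_{\omega'})$, and since orthogonal conjugation preserves symmetry and idempotency and $\sigma\Lambda=\Lambda$, the three defining equations $A^{tr}=A$, $A^{2}=A$, $A\Lambda=B$ are preserved. This redundantly confirms the $G$-invariance.
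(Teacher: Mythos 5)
Your argument is correct, but your primary route is genuinely different from the paper's. The paper proves this proposition purely matrix-theoretically: it invokes the bijection of Corollary~\ref{c3.5} between $\on{Sc}(V_{\widehat{\h}}(1,0),\omega_h)$ and $\on{SymId}_{h}$, computes $\sigma(\omega_{A,B})=\omega_{gAg^{tr},gB}$ for $g$ the matrix of $\sigma|_{\h}$, and checks that $(gAg^{tr},gB)$ again satisfies $A^{tr}=A$, $A^2=A$, $A\Lambda=B$ --- exactly the computation you relegate to a ``sanity check'' (note $g^{-1}=g^{tr}$ for $g\in\on{O}(\h)$, so your $\sigma A\sigma^{-1}$ agrees with the paper's $gAg^{tr}$; and $g\Lambda=\Lambda$ follows from $\sigma(h)=h$, which in turn gives $g^{tr}\Lambda=\Lambda$ as needed for $gAg^{tr}\Lambda=gB$). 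Your main argument instead works directly from the definition of a semi-conformal vector, pushing the witnessing subalgebra $U$ and its conformal vector forward under $\sigma$ via $\sigma(a_nb)=\sigma(a)_n\sigma(b)$ and $\sigma(\omega_h)=\omega_h$. This is a valid and in fact more general proof: it shows $\on{Sc}(W,\omega^W)$ is $\on{Aut}(W,\omega^W)$-stable for \emph{any} vertex operator algebra, with no reliance on the Heisenberg structure or on Proposition~\ref{p3.1}. What the paper's approach buys is an explicit formula for the action on the parameters $(A,B)$, which is then reused in the proof of Theorem~\ref{t1.2} to transport the action to $\on{Reg}(\h)_h$; your formal argument establishes invariance but would still need that explicit description later. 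Both are complete; including both, as you do, is the most informative presentation.
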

\begin{proof}
From Corollary \ref{c3.5}, we can suppose a vector $\omega_{A,B}\in \on{Sc}(V_{\widehat{\h}}(1,0),\omega_{h})$, where 
$(A,B)\in \on{SymId}_{h}$. For arbitrary $\sigma\in \on{Aut}(V_{\widehat{\h}}(1,0),\omega_h)$,  we have 
$\sigma(\omega_{A,B})=\omega_{gAg^{tr}, gB}$, where $g$ is the matrix of  the restriction of $\sigma$  on $\h$ with respect to the basis $\{h_1,h_2,\cdots,h_d\}$.
We can check $(gAg^{tr}, gB)\in \on{SymId}_{h}$, i.e.,  $\omega_{gAg^{tr}, gB}\in \on{Sc}(V_{\widehat{\h}}(1,0),\omega_{h})$.
\end{proof}
\subsection{}
By Proposition \ref{p3.1}, let $ G_{h}=\on{Aut}(V_{\widehat{\h}}(1,0),\omega_h)$ be the automorphism group of the vertex operator algebra $(V_{\widehat{\h}}(1,0),\omega_h)$. Then $G_{h}$ is a subgroup of $\on{O}(\h)$ (See Theorem \ref{t1.1}) and $G_{h}$ acts on $ V_{\widehat{\h}}(1,0)_n$ for all $n$. In particular, $G_{h}$ acts on the algebraic variety $ \on{Sc}(V_{\widehat{\h}}(1,0),\omega_h)$. One of the questions is to determine the $G_h$-orbits in $\on{Sc}(V_{\widehat{\h}}(1,0),\omega_h)$. In this paper, we will concentrate on the general cases for $h$. 

 With respect to a fixed orthonormal basis $\{ h_1, \cdots, h_d\}$ of $\h$, the symmetric matrix $A_{\omega'}$ defines  a self adjoint  (with respect to the symmetric bilinear form on $\h$)  linear transformation $\mathcal{A}_{\omega'}$ of $\h$. 
 By Proposition \ref{p3.1},  we know that a vector  $\omega'\in \on{Sc}(V_{\widehat{\h}}(1,0),\omega_h)$ is determined by the pair $(A,B)$. Let $\beta$ be the vector of $\h$  with the coordinate $B$ with respect to  the fixed orthonormal basis $\{ h_1, \cdots, h_d\}$ of $\h$.  Then such  a semi-conformal vector $\omega_{A,B}$ is one to one correspondence to a self adjoint  idempotent linear transformation $\mathcal{A}$ of $\h$ satisfying $\mathcal{A}(h)=\beta$. Thus, the set $\on{Sc}(V_{\widehat{\h}}(1,0),\omega_h)$ can be described as the set of  self adjoint idempotent linear transformations of $\h$. Let 
 $$\on{SymId}(\h)=\{\mathcal{A}\in \on{End}(\h)\mid\mathcal{A}^2=\mathcal{A},  \<\mathcal{A}(u),v\>=\<u,\mathcal{A}(v)\>,  \forall u,v, \in \h\}.$$
 $$\on{SymId}(\h)_{h}=\{(\mathcal{A},\beta)\in \on{SymId}(\h)\times \h \mid \mathcal{A}(h)=\beta\}.$$
 Then we have
\begin{prop}\label{p4.1}
The map $\omega_{A,B}\mapsto  ( \mathcal{A},\beta) $ is a bijection from $\on{Sc}(V_{\widehat{\h}}(1,0),\omega_{h})$  to $\on{SymId}(\h)_{h}$.
\end{prop}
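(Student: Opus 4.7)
\medskip

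\noindent\textbf{Proof plan for Proposition \ref{p4.1}.}

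The plan is to recognize this proposition as a coordinate-free reformulation of Corollary \ref{c3.5}, so the work reduces to verifying that the pair $(A,B)$ appearing in the matrix description corresponds canonically to a pair $(\mathcal{A},\beta)$ with $\mathcal{A}$ a self-adjoint idempotent on $\h$ and $\beta = \mathcal{A}(h)$. First, I would use the orthonormal basis $\{h_1,\dots,h_d\}$ to identify $\on{End}(\h)$ with $d\times d$ matrices via $\mathcal{A}(h_j)=\sum_i A_{ij}h_i$, and identify $\h$ with column vectors of length $d$. Under this identification the semi-conformal vector $\omega_{A,B}$ produced via (\ref{f3.3}) corresponds to the pair $(A,B)$, and conversely, every pair $(A,B)$ gives back $\omega_{A,B}$. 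Thus by Corollary \ref{c3.5} the map $\omega_{A,B}\mapsto (A,B)$ is already a set-theoretic bijection onto $\on{SymId}_{h}$.

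The remaining step is to translate the three defining conditions on $(A,B)$ in $\on{SymId}_{h}$ into the three intrinsic conditions defining $\on{SymId}(\h)_{h}$. Since the chosen basis is orthonormal, the Gram matrix of $\langle\cdot,\cdot\rangle$ is the identity, so for any vectors $u,v\in\h$ with coordinate columns $X,Y$ one has $\langle\mathcal{A}(u),v\rangle = (AX)^{tr}Y = X^{tr}A^{tr}Y$ and $\langle u,\mathcal{A}(v)\rangle = X^{tr}AY$; hence $A^{tr}=A$ is equivalent to $\mathcal{A}$ being self-adjoint. The identity $A^2=A$ is plainly equivalent to $\mathcal{A}^2=\mathcal{A}$ since matrix multiplication represents composition of endomorphisms in the fixed basis. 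Finally, because $h$ has coordinate column $\Lambda$ and $\beta$ is defined by declaring its coordinate column to be $B$, the equation $A\Lambda = B$ becomes precisely $\mathcal{A}(h)=\beta$.

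Putting these three equivalences together, the bijection $\omega_{A,B}\leftrightarrow (A,B)$ from Corollary \ref{c3.5} restricts to a bijection $\omega_{A,B}\leftrightarrow (\mathcal{A},\beta)$ between $\on{Sc}(V_{\widehat{\h}}(1,0),\omega_h)$ and $\on{SymId}(\h)_{h}$. No step here is a genuine obstacle; the only thing to be careful about is that the dictionary between symmetric matrices and self-adjoint operators genuinely uses the orthonormality of the chosen basis, and that the intrinsic definition of $\on{SymId}(\h)_{h}$ is manifestly independent of the basis, so the correspondence constructed above does not depend on the choice made to write down $A_{\omega'}$ and $B_{\omega'}$.
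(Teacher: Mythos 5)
Your proposal is correct and follows essentially the same route as the paper, which presents this proposition as an immediate coordinate-free restatement of Corollary \ref{c3.5} via the standard dictionary (using the orthonormal basis) between symmetric idempotent matrices $A$ with $A\Lambda=B$ and self-adjoint idempotent operators $\mathcal{A}$ with $\mathcal{A}(h)=\beta$. Your explicit verification of the three equivalences, and the remark that orthonormality is what makes $A^{tr}=A$ correspond to self-adjointness, fills in exactly the details the paper leaves implicit.
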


Recall the notations  
$\on{Reg}(\h)$ and  $\on{Reg}(\h)_{h}$ as defined in (1.2) and (1.3). When $h=0$, $\on{Reg}(\h)_{h}=\on{Reg}(\h).$ By the standard theory of linear algebra, we have
\begin{lem}\label{l4.2} For each $\mathcal{A}\in \on{SymId}(\h)$,  $\on{Im}\mathcal{A}$ is a regular subspace of $\h$, i.e., $\on{Im}\mathcal{A}\in \on{Reg}(\h)$.
\end{lem}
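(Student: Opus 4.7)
The plan is to verify directly that $\langle\cdot,\cdot\rangle$ restricts nondegenerately to $\on{Im}\mathcal{A}$, using only the two defining properties of $\mathcal{A}\in\on{SymId}(\h)$ (self-adjointness and idempotency) together with the nondegeneracy of $\langle\cdot,\cdot\rangle$ on $\h$.

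First I would record the elementary consequence of $\mathcal{A}^2=\mathcal{A}$: every $u\in\on{Im}\mathcal{A}$ is fixed by $\mathcal{A}$, i.e., $\mathcal{A}(u)=u$ (write $u=\mathcal{A}(u')$ and apply $\mathcal{A}$ once more). Next, suppose $u\in\on{Im}\mathcal{A}$ lies in the radical of the restricted form, so that $\langle u,v\rangle=0$ for every $v\in\on{Im}\mathcal{A}$. For an arbitrary $w\in\h$, the vector $\mathcal{A}(w)$ belongs to $\on{Im}\mathcal{A}$, hence
\[
0=\langle u,\mathcal{A}(w)\rangle=\langle \mathcal{A}(u),w\rangle=\langle u,w\rangle,
\]
where the middle equality uses self-adjointness and the last uses $\mathcal{A}(u)=u$. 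Since $\langle\cdot,\cdot\rangle$ is nondegenerate on $\h$, this forces $u=0$, so $\on{Im}\mathcal{A}$ is regular, i.e., $\on{Im}\mathcal{A}\in\on{Reg}(\h)$.

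There is no real obstacle here; the entire argument is a two-line linear algebra computation. The only thing to be careful about is not to invoke anything like orthogonal diagonalizability (the form is complex symmetric, not Hermitian), and instead to deduce regularity purely from the identity $\langle\mathcal{A}(u),w\rangle=\langle u,\mathcal{A}(w)\rangle$ together with $\mathcal{A}|_{\on{Im}\mathcal{A}}=\on{Id}$. As a by-product, the same argument shows $\on{Ker}\mathcal{A}=(\on{Im}\mathcal{A})^{\perp}$ and therefore $\on{Ker}\mathcal{A}$ is also regular and $\h=\on{Im}\mathcal{A}\oplus\on{Ker}\mathcal{A}$ is an orthogonal decomposition, which will be useful for the sequel (in particular for Theorem \ref{t1.5}).
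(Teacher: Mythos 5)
Your proof is correct. The paper itself gives no argument for this lemma, simply asserting it ``by the standard theory of linear algebra,'' and your two-line computation --- $\mathcal{A}|_{\on{Im}\mathcal{A}}=\on{Id}$ from idempotency, then $\langle u,w\rangle=\langle \mathcal{A}(u),w\rangle=\langle u,\mathcal{A}(w)\rangle=0$ for $u$ in the radical of the restricted form --- is precisely the standard argument being invoked, and it correctly avoids any appeal to diagonalizability of the complex symmetric form. Your by-product observation that $\on{Ker}\mathcal{A}=(\on{Im}\mathcal{A})^{\perp}$ and $\h=\on{Im}\mathcal{A}\oplus\on{Ker}\mathcal{A}$ is likewise what the paper uses later (e.g.\ in Proposition \ref{c4.3} and Theorem \ref{t1.5}) without separate justification.
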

\begin{prop} \label{c4.3}
 The map $\omega_{A,B}\mapsto (\on{Im}\mathcal{A},\mathcal{A}(h)) $ is a bijection from $\on{Sc}(V_{\widehat{\h}}(1,0),\omega_{h})$ to  $\on{Reg}(\h)_{h}$.
\end{prop}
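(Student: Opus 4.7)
The plan is to combine Proposition~\ref{p4.1}, which gives a bijection $\on{Sc}(V_{\widehat{\h}}(1,0),\omega_h)\cong \on{SymId}(\h)_{h}$, with a standard identification: self-adjoint idempotents on $\h$ are exactly the orthogonal projections onto regular subspaces. So it suffices to produce a natural bijection $\on{SymId}(\h)_{h}\cong \on{Reg}(\h)_{h}$ via $(\mathcal{A},\beta)\mapsto(\on{Im}\mathcal{A},\mathcal{A}(h))$, and compose.

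First I would verify that the map is well-defined. Given $\mathcal{A}\in\on{SymId}(\h)$, Lemma~\ref{l4.2} says $\h':=\on{Im}\mathcal{A}\in\on{Reg}(\h)$, so I only need $\mathcal{A}(h)=P_{\h'}(h)$. Because $\mathcal{A}^2=\mathcal{A}$, one has $\h=\on{Im}\mathcal{A}\oplus\on{Ker}\mathcal{A}$. Using self-adjointness of $\mathcal{A}$, for any $x=\mathcal{A}(u)\in\on{Im}\mathcal{A}$ and $y\in\on{Ker}\mathcal{A}$,
\[
\langle x,y\rangle=\langle\mathcal{A}(u),y\rangle=\langle u,\mathcal{A}(y)\rangle=0,
\]
so $\on{Ker}\mathcal{A}\subseteq(\on{Im}\mathcal{A})^{\perp}$; dimension count (using regularity of $\h'$) gives equality. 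Hence $\mathcal{A}$ agrees with $P_{\h'}$ on both summands of $\h=\h'\oplus\h'^{\perp}$, so $\mathcal{A}=P_{\h'}$ and in particular $\mathcal{A}(h)=P_{\h'}(h)$. This places the image of our map in $\on{Reg}(\h)_{h}$.

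For surjectivity, I would start from an arbitrary $(\h',h')\in\on{Reg}(\h)_{h}$, define $\mathcal{A}:=P_{\h'}$, and observe that $\mathcal{A}$ is self-adjoint and idempotent with $\on{Im}\mathcal{A}=\h'$ and $\mathcal{A}(h)=P_{\h'}(h)=h'$; thus $(\mathcal{A},h')\in\on{SymId}(\h)_{h}$ maps to $(\h',h')$. For injectivity, if two elements $(\mathcal{A},\beta),(\mathcal{A}',\beta')\in\on{SymId}(\h)_{h}$ have $\on{Im}\mathcal{A}=\on{Im}\mathcal{A}'=\h'$, then by the previous paragraph each equals $P_{\h'}$, hence $\mathcal{A}=\mathcal{A}'$ and $\beta=\mathcal{A}(h)=\mathcal{A}'(h)=\beta'$. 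Composing with Proposition~\ref{p4.1} yields the claimed bijection.

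The proof is essentially a translation, so I do not expect a serious obstacle; the only point requiring care is the step $\on{Ker}\mathcal{A}=(\on{Im}\mathcal{A})^{\perp}$, because $\langle\cdot,\cdot\rangle$ on $\h$ is symmetric but need not be positive definite. However the argument above only uses self-adjointness of $\mathcal{A}$, and the equality of dimensions follows because $\h'$ is regular (so that $(\h')^{\perp}$ has complementary dimension to $\h'$). Once this is secured, the rest is formal.
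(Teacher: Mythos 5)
Your proposal is correct and follows essentially the same route as the paper: both reduce to the bijection of Proposition~\ref{p4.1} with $\on{SymId}(\h)_{h}$ and then identify self-adjoint idempotents with orthogonal projections onto regular subspaces, using Lemma~\ref{l4.2} for well-definedness and the projection $P_{\h'}$ for the inverse. Your write-up is in fact a bit more careful than the paper's at the one delicate point, namely verifying $\on{Ker}\mathcal{A}=(\on{Im}\mathcal{A})^{\perp}$ from self-adjointness rather than asserting it, which also yields the injectivity cleanly.
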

\begin{proof}
For $\omega_{A,B}\in \on{Sc}(V_{\widehat{\h}}(1,0),\omega_h)$, there is a unique $(\mathcal{A},\beta)\in \on{SymId}(\h)_{h}$.
Hence we have $\h=\on{Im}\mathcal{A}\oplus \on{Ker}\mathcal{A}$ and $\beta=\mathcal{A}(h)\in \on{Im}\mathcal{A}$, where $\on{Ker}\mathcal{A}=\on{Im}\mathcal{A}^{\bot}$. Note that $h=\mathcal{A}(h)+(h-\mathcal{A}(h))$ uniquely in $\h=\on{Im}\mathcal{A}\oplus \on{Ker}\mathcal{A}$. By Lemma \ref{l4.2},  the restriction $\<\cdot,\cdot\>$ on $\h$ to $\on{Im}\mathcal{A}$ is still nondegenerate, then by Proposition \ref{p4.1}, we know that $\omega_{A,B}\mapsto
(\mathcal{A},\beta)$ gives $(\on{Im}\mathcal{A}, \mathcal{A}(h))\in \on{Reg}(\h)_{h}$. Conversely,  for any  $\<\h',h'\>\in \on{Reg}(\h)_{h}$, there exists a projection $\mathcal{A}\in \on{End}(\h)$ such that $\on{Im}(\mathcal{A})=\h'$, $\<\mathcal{A}(u),v\>=\<u,\mathcal{A}(v)\>~for~u,v\in \h$ and $\mathcal{A}(h)=h'$.
Thus, $(\mathcal{A},\mathcal{A}(h))\in \on{SymId}(\h)_{h}$.  By Proposition \ref{p4.1}, we know that there exists a unique
$\omega'\in \on{Sc}(V_{\widehat{\h}}(1,0),\omega_{h})$ such that $(\mathcal{A},\mathcal{A}(h))$ is correspondence to  $\omega'$. Thus, $(\h',h')\mapsto \omega'$ gives the
inverse of the map $\omega'\mapsto (\on{Im}\mathcal{A}, \mathcal{A}(h))$.
\end{proof}

Next, we study a partial order on $\on{SymId}_{h}$.
\begin{prop}\label{p4.4}
Let $\omega_{A_1, B_1},\omega_{A_2,B_2}\in \on{Sc}(V_{\widehat{\h}}(1,0), \omega_{h})$. Here $ (A_1,B_1), (A_2,B_2)\in \on{SymId}_{h}$.  Then $\omega_{A_1, B_1}\preceq \omega_{A_2,B_2}$  if and only if $A_1,B_1,A_2, B_2$ satisfy the following relations
\begin{equation}
A_2A_1=A_1; A_2B_1=B_1=A_1B_2.
\end{equation}
\end{prop}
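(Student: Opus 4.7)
The plan is to invoke the algebraic characterization of the partial order $\preceq$ from Proposition 2.8 of \cite{CL}, which expresses $\omega^1 \preceq \omega^2$ as a finite system of identities on the modes $(\omega^2)_n \omega^1$ (saying essentially that $\omega^1$ is semi-conformal inside the VOA $(V, \omega^2)$ and that $\omega^2 - \omega^1$ commutes in the appropriate sense with $\omega^1$), and then to translate these identities into matrix equations in $A_1, A_2, B_1, B_2$ using the explicit shape of $\omega_{A_i, B_i}$.

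Concretely, I would substitute
\[ \omega_{A_i, B_i} = \tfrac12 (h_1(-1),\ldots,h_d(-1))\, A_i\, (h_1(-1),\ldots,h_d(-1))^{tr}\cdot\mathbf{1} + \sum_j (B_i)_j\, h_j(-2)\mathbf{1} \]
into each of the defining identities and expand $(\omega_{A_2,B_2})_n \omega_{A_1,B_1}$ using the bracket relations $[h_i(m), h_j(n)] = m\langle h_i, h_j\rangle \delta_{m+n,0}$. The resulting vectors live in weight two and decompose in the basis $\{h_i(-1)h_j(-1)\mathbf{1}, h_k(-2)\mathbf{1}\}$ of (\ref{f3.1}), so equating them with the intrinsic values $(\omega_{A_1,B_1})_n \omega_{A_1,B_1}$ (which are known since $\omega_{A_1, B_1}$ is itself semi-conformal) reduces everything to coefficient-matching. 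The quadratic $\times$ quadratic interaction forces $A_2 A_1 = A_1$ (which, together with the symmetry of both matrices from Proposition \ref{p3.1}, yields $A_1 A_2 = A_1$ automatically); the quadratic-linear cross terms involving $A_2$ and the $B_1$-component of $\omega^1$ deliver $A_2 B_1 = B_1$; and the linear-quadratic cross terms involving $B_2$ and the $A_1$-part of $\omega^1$ deliver $A_1 B_2 = B_1$. For the converse, one verifies directly that these three matrix relations, together with the standing constraints $A_i^2 = A_i$, $A_i^{tr} = A_i$, $A_i\Lambda = B_i$ from Proposition \ref{p3.1}, are already enough to recover every identity in Proposition 2.8 of \cite{CL}.

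The main obstacle will be the bookkeeping: a priori Proposition 2.8 of \cite{CL} produces a large system of coefficient equations, and one must see that most of them are either automatic from $A_i^2 = A_i$ and $A_i\Lambda = B_i$, or redundant once $A_2A_1 = A_1$ is established. The subtlest identity to extract is $A_1 B_2 = B_1$, since it appears only after carefully separating the contribution of the $B_2$-piece of $\omega_{A_2,B_2}$ acting on the $A_1$-piece of $\omega_{A_1,B_1}$; here the relation $A_1\Lambda = B_1$ must be invoked to prevent this condition from collapsing into the already obtained $A_2 B_1 = B_1$, and to show that exactly three independent matrix identities emerge.
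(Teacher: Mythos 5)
Your proposal is correct and follows essentially the same route as the paper: the paper likewise invokes \cite[Proposition 2.8]{CL}, computes the mode conditions $L^2(0)\omega_{A_1,B_1}=2\omega_{A_1,B_1}$, $L^2(1)\omega_{A_1,B_1}=0$, $L^2(2)\omega_{A_1,B_1}=L^1(2)\omega_{A_1,B_1}$, $L^2(-1)\omega_{A_1,B_1}=L^1(-1)\omega_{A_1,B_1}$ to obtain the system $A_2A_1=A_1$, $A_2B_1=B_1=A_1B_2$, $B_2^{tr}B_1=B_1^{tr}B_1$, $A_2A_1=A_1^2$, $A_2B_1=A_1B_1$, and then reduces it to the three stated identities using the constraints defining $\on{SymId}_h$, exactly as you predict in your redundancy analysis.
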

\begin{proof}
According to \cite[Proposition 2.8]{CL}, we  compute  relations \cite[(2.6)]{CL} to get the following relations:
$L^2(0)\omega_{A_1,B_1}=2\omega_{A_1,B_1}$ can give
\begin{equation}
A_2A_1=A_1, A_2B_1=B_1;
\end{equation}
$L^2(1)\omega_{A_1,B_1}=0$ can give
\begin{equation}
A_2B_1=A_1B_2;
\end{equation}
$L^2(2)\omega_{A_1,B_1}=L^1(2)\omega_{A_1,B_1}$ can give
\begin{equation}
B_2^{tr}B_1=B_1^{tr}B_1;
\end{equation}
$L^2(-1)\omega_{A_1,B_1}=L^1(-1)\omega_{A_1,B_1}$ can give
\begin{equation}
A_2A_1=A_1^2, A_2B_1=A_1B_1;
\end{equation}
The conditions $L^2(n)\omega_{A_1,B_1}=0$ for $n\geq 3$  are satisfied naturally for a CFT-type vertex operator algebra. 
Thus we can obtain $\omega_{A_1, B_1}\preceq \omega_{A_2,B_2}$  if and only if  the following relations hold:
\begin{equation}\label{r4.6}
 A_2A_1=A_1, A_2B_1=B_1,A_2B_1=A_1B_2,B_2^{tr}B_1=B_1^{tr}B_1,A_2A_1=A_1^2, A_2B_1=A_1B_1.
  \end{equation}
 Since $ (A_1,B_1), (A_2,B_2)\in \on{SymId}_{h}$, then  relations (\ref{r4.6}) can be reduced to the following relations
 \begin{equation}
 A_2A_1=A_1, A_2B_1=B_1=A_1B_2.
 \end{equation}
  \end{proof}
  According to the above Proposition, we can get the following conclusions immediately.
 \begin{cor}\label{c4.5}
 Let $(A_1,B_1), (A_2,B_2)\in \on{SymId}_{h}$. Then we can define $ (A_1,B_1)\leq (A_2,B_2) $
 if the relations $ 
 A_2A_1=A_1, A_2B_1=B_1=A_1B_2$ hold. Thus, $\leq$ give a partial order on  $\on{SymId}_{h}$.
 \end{cor}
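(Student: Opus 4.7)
The plan is to verify the three axioms of a partial order (reflexivity, antisymmetry, transitivity) for the relation $\leq$ directly from the defining conditions $A_2A_1 = A_1$ and $A_2 B_1 = B_1 = A_1 B_2$, making essential use of the fact that elements of $\on{SymId}_h$ consist of symmetric idempotents $A$ together with vectors $B = A\Lambda$.

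For reflexivity, I need to check $(A,B) \leq (A,B)$, which amounts to $A^2 = A$ and $AB = B$. The first holds because $A$ is idempotent, and the second follows from $AB = A(A\Lambda) = A^2\Lambda = A\Lambda = B$. The key observation that drives antisymmetry and transitivity is that since $A_i^{tr} = A_i$, the relation $A_2 A_1 = A_1$ automatically yields $A_1 A_2 = A_1$ by taking transposes: $(A_2 A_1)^{tr} = A_1^{tr} A_2^{tr} = A_1 A_2$, while $A_1^{tr} = A_1$. I would highlight this symmetry trick up front, since it is what makes the antisymmetric and transitivity arguments go through.

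For antisymmetry, assuming $(A_1,B_1)\leq (A_2,B_2)$ and $(A_2,B_2)\leq (A_1,B_1)$, the two relations $A_2 A_1 = A_1$ and $A_1 A_2 = A_2$ combined with the transpose trick give $A_1 = A_1 A_2 = A_2$; then $B_1 = A_1 B_2 = A_2 B_2 = B_2$ (using again that $A_2 B_2 = B_2$, which we noted above). For transitivity, starting from $(A_1,B_1)\leq (A_2,B_2)\leq (A_3,B_3)$, I compose: $A_3 A_1 = A_3 A_2 A_1 = A_2 A_1 = A_1$, and $A_3 B_1 = A_3 A_2 B_1 = A_2 B_1 = B_1$. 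The last relation $A_1 B_3 = B_1$ is the most delicate; here I would invoke $A_1 A_2 = A_1$ (from the transpose trick applied to $A_2 A_1 = A_1$) to write $A_1 B_3 = A_1 A_2 B_3 = A_1 B_2 = B_1$, using $A_2 B_3 = B_2$ and $A_1 B_2 = B_1$.

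The main (and only) obstacle is the verification of the cross-identity $A_1 B_3 = B_1$ in transitivity, because the hypotheses mix multiplication on the left by $A_2$ with multiplication on the left by $A_1$; without the symmetry of $A_i$ (which converts $A_2 A_1 = A_1$ into $A_1 A_2 = A_1$), this step would fail. Everything else is a short formal manipulation, so the whole proof should be compact.
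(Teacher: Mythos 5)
Your verification is correct: reflexivity follows from $A^2=A$ and $AB=A^2\Lambda=A\Lambda=B$; the transpose observation $A_2A_1=A_1\Rightarrow A_1A_2=(A_2A_1)^{tr}=A_1^{tr}=A_1$ (using $A_i^{tr}=A_i$) is exactly what is needed for antisymmetry and for the cross-identity $A_1B_3=A_1A_2B_3=A_1B_2=B_1$ in transitivity, and each step you give checks out. Your route is, however, different from the paper's. The paper offers no direct verification at all: it obtains the corollary ``immediately'' from Proposition 4.4, which identifies the relations $A_2A_1=A_1$, $A_2B_1=B_1=A_1B_2$ with the already-established partial order $\preceq$ on $\on{Sc}(V_{\widehat{\h}}(1,0),\omega_{h})$ (from \cite[Definition 2.7]{CL}) under the bijection $\omega_{A,B}\mapsto(A,B)$ of Corollary 4.2 --- i.e., a transport-of-structure argument. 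Your direct axiom check is more self-contained, since it does not presuppose that $\preceq$ is a partial order on the variety of semi-conformal vectors, and it isolates the role of the symmetry hypothesis $A^{tr}=A$ (without which the step $A_1B_3=B_1$ would indeed fail); the paper's approach is shorter given the machinery already in place and makes the order-preserving bijection of Theorem 1.3 transparent. Both are valid proofs of the statement.
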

  
\begin{cor}\label{c4.6} 
Let $(\mathcal{A}_1,\beta_1), (\mathcal{A}_2,\beta_2)\in \on{SymId}(\h)_{h}$. Then we can define $ (\mathcal{A}_1,\beta_1)\leq (\mathcal{A}_2,\beta_2) $
 if the relations $  \mathcal{A}_2\mathcal{A}_1=\mathcal{A}_1, \mathcal{A}_2(\beta_1)=\beta_1=\mathcal{A}_1(\beta_2)$ hold. Thus $\leq$ gives a partial order on  $\on{SymId}(\h)_{h}$.
\end{cor}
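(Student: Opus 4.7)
The plan is to deduce Corollary~\ref{c4.6} from Corollary~\ref{c4.5} by transporting the partial order along the natural bijection between matrices and linear transformations induced by the fixed orthonormal basis $\{h_1,\dots,h_d\}$ of $\h$. Concretely, with respect to this basis the bilinear form $\<\cdot,\cdot\>$ has identity Gram matrix, so a symmetric matrix $A$ corresponds exactly to a self-adjoint linear operator $\mathcal{A}\in\on{End}(\h)$, the condition $A^2=A$ corresponds to $\mathcal{A}^2=\mathcal{A}$, and the identity $A\Lambda=B$ corresponds to $\mathcal{A}(h)=\beta$, where $\beta\in\h$ is the vector whose coordinate column in the basis is $B$. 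This yields a bijection $\on{SymId}_h\longleftrightarrow\on{SymId}(\h)_h$ given by $(A,B)\mapsto(\mathcal{A},\beta)$.

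Next I would translate the defining relations. Under the above correspondence, matrix multiplication $A_2A_1$ corresponds to composition $\mathcal{A}_2\mathcal{A}_1$, and a matrix-column product $A_iB_j$ corresponds to the vector $\mathcal{A}_i(\beta_j)$. Hence the system
\[
A_2A_1=A_1,\qquad A_2B_1=B_1=A_1B_2
\]
is equivalent to
\[
\mathcal{A}_2\mathcal{A}_1=\mathcal{A}_1,\qquad \mathcal{A}_2(\beta_1)=\beta_1=\mathcal{A}_1(\beta_2).
\]
So the relation on $\on{SymId}(\h)_h$ defined in the statement is literally the pull-back along the above bijection of the relation $\leq$ on $\on{SymId}_h$ from Corollary~\ref{c4.5}.

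Finally, because the pull-back of a partial order under a bijection is a partial order, it suffices to invoke Corollary~\ref{c4.5}. For completeness one may also verify the three axioms directly in the operator language: reflexivity follows from $\mathcal{A}^2=\mathcal{A}$ and $\mathcal{A}(\beta)=\mathcal{A}^2(h)=\mathcal{A}(h)=\beta$; antisymmetry follows from $\mathcal{A}_2\mathcal{A}_1=\mathcal{A}_1$ and $\mathcal{A}_1\mathcal{A}_2=\mathcal{A}_2$ together with self-adjointness (each gives $\on{Im}\mathcal{A}_1=\on{Im}\mathcal{A}_2$, and a self-adjoint idempotent is determined by its image); transitivity is immediate from $\mathcal{A}_3\mathcal{A}_2=\mathcal{A}_2$, $\mathcal{A}_2\mathcal{A}_1=\mathcal{A}_1$ yielding $\mathcal{A}_3\mathcal{A}_1=\mathcal{A}_3(\mathcal{A}_2\mathcal{A}_1)=\mathcal{A}_2\mathcal{A}_1=\mathcal{A}_1$, and similarly for the vector conditions.

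There is no real obstacle here: the only mild subtlety is to remember that the bijection between matrices and operators depends on the chosen orthonormal basis, but since the conditions $\mathcal{A}^2=\mathcal{A}$, self-adjointness, and $\mathcal{A}(h)=\beta$ are basis-free, the resulting partial order on $\on{SymId}(\h)_h$ is intrinsic and independent of that choice.
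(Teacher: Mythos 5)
Your proposal is correct and follows essentially the route the paper intends: the paper states Corollary \ref{c4.6} as an immediate consequence of Proposition \ref{p4.4} and Corollary \ref{c4.5}, i.e., by transporting the relation along the basis-induced correspondence $(A,B)\leftrightarrow(\mathcal{A},\beta)$, exactly as you do. Your additional direct verification of reflexivity, antisymmetry, and transitivity (using that a self-adjoint idempotent is determined by its image, and that $\mathcal{A}_2\mathcal{A}_1=\mathcal{A}_1$ together with self-adjointness yields $\mathcal{A}_1\mathcal{A}_2=\mathcal{A}_1$) is sound and in fact supplies detail the paper leaves implicit.
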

\begin{lem}\label{l4.7}
For $(\mathcal{A}_1,\beta_1),(\mathcal{A}_2,\beta_2)\in \on{SymId}(\h)_{h}$, $(\mathcal{A}_1,\beta_1)\leq(\mathcal{A}_2,\beta_2)$ if and only if $\on{Im}\mathcal{A}_1\subset \on{Im}\mathcal{A}_2$ and $\mathcal{A}_2(\beta_1)=\beta_1=\mathcal{A}_1(\beta_2)$.
\end{lem}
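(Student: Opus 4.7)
The plan is to reduce the stated equivalence to a purely linear-algebraic fact about self-adjoint idempotents on $\h$, namely that for $\mathcal{A}_1, \mathcal{A}_2 \in \on{SymId}(\h)$ one has
\[
\mathcal{A}_2\mathcal{A}_1 = \mathcal{A}_1 \iff \on{Im}\mathcal{A}_1 \subset \on{Im}\mathcal{A}_2.
\]
Once this is established, the lemma follows immediately by unpacking the definition of $\leq$ from Corollary \ref{c4.6}, since the two auxiliary equations $\mathcal{A}_2(\beta_1) = \beta_1 = \mathcal{A}_1(\beta_2)$ appear verbatim on both sides of the claimed biconditional.

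For the forward direction of the auxiliary equivalence, assume $\mathcal{A}_2\mathcal{A}_1 = \mathcal{A}_1$. Then for any $v \in \h$ we have $\mathcal{A}_1(v) = \mathcal{A}_2(\mathcal{A}_1(v)) \in \on{Im}\mathcal{A}_2$, so $\on{Im}\mathcal{A}_1 \subset \on{Im}\mathcal{A}_2$. For the reverse direction, the crucial observation is that since $\mathcal{A}_2$ is idempotent, it acts as the identity on $\on{Im}\mathcal{A}_2$: indeed, any $w \in \on{Im}\mathcal{A}_2$ can be written $w = \mathcal{A}_2(u)$, and then $\mathcal{A}_2(w) = \mathcal{A}_2^2(u) = \mathcal{A}_2(u) = w$. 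Assuming $\on{Im}\mathcal{A}_1 \subset \on{Im}\mathcal{A}_2$, we then get $\mathcal{A}_2(\mathcal{A}_1(v)) = \mathcal{A}_1(v)$ for every $v$, i.e.\ $\mathcal{A}_2\mathcal{A}_1 = \mathcal{A}_1$.

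Combining this with Corollary \ref{c4.6} completes the proof: by definition, $(\mathcal{A}_1,\beta_1) \leq (\mathcal{A}_2,\beta_2)$ is the conjunction of $\mathcal{A}_2\mathcal{A}_1 = \mathcal{A}_1$ and $\mathcal{A}_2(\beta_1) = \beta_1 = \mathcal{A}_1(\beta_2)$, which by the equivalence above is the same as $\on{Im}\mathcal{A}_1 \subset \on{Im}\mathcal{A}_2$ together with the same two equations on $\beta_1, \beta_2$. There is no real obstacle here; the proof is essentially a one-line application of the idempotence of self-adjoint projections, and the self-adjointness property of the elements of $\on{SymId}(\h)$ is not even needed for this particular statement (it only enters through the earlier identification $\on{Ker}\mathcal{A} = (\on{Im}\mathcal{A})^\perp$ used in Lemma \ref{l4.2} and Proposition \ref{c4.3}).
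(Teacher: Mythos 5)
Your proof is correct and follows essentially the same route as the paper: both reduce the lemma to the equivalence $\mathcal{A}_2\mathcal{A}_1=\mathcal{A}_1 \iff \on{Im}\mathcal{A}_1\subset\on{Im}\mathcal{A}_2$, with the forward direction read off from $\mathcal{A}_1(v)=\mathcal{A}_2(\mathcal{A}_1(v))$ and the converse resting on the fact that the idempotent $\mathcal{A}_2$ acts as the identity on its image (the paper phrases this via the decomposition $\h=\on{Ker}\mathcal{A}_1\oplus\on{Im}\mathcal{A}_1$, but the operative fact is the same). Your observation that self-adjointness is not needed for this particular equivalence is also accurate.
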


\begin{proof} According to Corollary \ref{c4.6}, for
$(\mathcal{A}_1,\beta_1),(\mathcal{A}_2,\beta_2)\in \on{SymId}(\h)_{h}$,  
$(\mathcal{A}_1,\beta_1)\leq(\mathcal{A}_2,\beta_2)$ if and only if the relations $  \mathcal{A}_2\mathcal{A}_1=\mathcal{A}_1, \mathcal{A}_2(\beta_1)=\beta_1=\mathcal{A}_1(\beta_2)$ hold.  Since $\mathcal{A}_2\mathcal{A}_1=\mathcal{A}_1$, then we have
$\on{Im}\mathcal{A}_1=\on{Im}\mathcal{A}_2\mathcal{A}_1.$
And since $\on{Im}\mathcal{A}_2\mathcal{A}_1\subset \on{Im}\mathcal{A}_2$, then $\on{Im}\mathcal{A}_1\subset \on{Im}\mathcal{A}_2$.

Conversely, for  $(\mathcal{A}_1,\beta_1),(\mathcal{A}_2,\beta_2)\in \on{SymId}(\h)_{h}$,
 we have $\h=\on{Ker}\mathcal{A}_1\oplus \on{Im}\mathcal{A}_1=\on{Ker}\mathcal{A}_2\oplus \on{Im}\mathcal{A}_2.$
For $\forall \gamma\in \h$, there is
$$\gamma=\alpha_1+\alpha_2=\gamma_1+\gamma_2,$$
where $\alpha_1\in \on{Ker}\mathcal{A}_1,\alpha_2\in\on{Im}\mathcal{A}_1;\gamma_1\in \on{Ker}\mathcal{A}_2,\gamma_2\in \on{Im}\mathcal{A}_2.$  Since  $\on{Im}\mathcal{A}_1\subset \on{Im}\mathcal{A}_2$, then we have 
$\mathcal{A}_2(\alpha_2)=\alpha_2$. And since $\mathcal{A}_1(\gamma)=\mathcal{A}_1(\alpha_2)=\alpha_2$, 
so we  have
$$
\mathcal{A}_2\mathcal{A}_1(\gamma)=\mathcal{A}_2(\mathcal{A}_1(\alpha_1+\alpha_2))
=\mathcal{A}_2(\mathcal{A}_1(\alpha_2))=\mathcal{A}_2(\alpha_2)=\alpha_2=\mathcal{A}_1(\alpha_2)=\mathcal{A}_1(\gamma).
$$
Hence we get $\mathcal{A}_2\mathcal{A}_1=\mathcal{A}_1$. Since $\mathcal{A}_2(\beta_1)=\beta_1=\mathcal{A}_1(\beta_2)$. Thus we get $(\mathcal{A}_1,\beta_1)\leq (\mathcal{A}_2,\beta_2)$.\end{proof}
Next we can get a partial order on  $\on{Reg}(\h)_{h}$.
 \begin{prop}\label{p4.8}
For any two elements $(\h',h'),(\h'',h'')\in \on{Reg}(\h)_{h}$, we can define 
$(\h',h')\leq (\h'',h'')$ if $\h'\subset \h''$ and $P_{\h''}(h')=h'=P_{\h'}(h'')$, where $P_{\h''}$ is the projection of $\h$ into $\h''$. This gives a partial order on $\on{Reg}(\h)_{h}$.
\end{prop}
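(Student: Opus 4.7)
The plan is to transport the partial order already established on $\on{SymId}(\h)_h$ in Corollary \ref{c4.6} to $\on{Reg}(\h)_h$ along the bijection of Proposition \ref{c4.3}, and to verify that the transported order is exactly the one defined in the statement. The first step is to record that, under the bijection $(\mathcal{A},\beta)\leftrightarrow(\on{Im}\mathcal{A},\mathcal{A}(h))$, the self-adjoint idempotent $\mathcal{A}$ is precisely the orthogonal projection $P_{\h'}$ onto the regular subspace $\h'=\on{Im}\mathcal{A}$, and $\beta=P_{\h'}(h)=h'$. This identification follows from the decomposition $\h=\on{Im}\mathcal{A}\oplus\on{Ker}\mathcal{A}$ with $\on{Ker}\mathcal{A}=(\on{Im}\mathcal{A})^{\perp}$, which in turn uses self-adjointness of $\mathcal{A}$ together with Lemma \ref{l4.2}.

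With this identification in place, Lemma \ref{l4.7} reads: $(P_{\h_1},h_1)\leq(P_{\h_2},h_2)$ if and only if $\h_1\subset\h_2$, $P_{\h_2}(h_1)=h_1$, and $P_{\h_1}(h_2)=h_1$, which is exactly the relation $(\h_1,h_1)\leq(\h_2,h_2)$ defined on $\on{Reg}(\h)_h$ in the statement. Since reflexivity, antisymmetry, and transitivity are preserved under bijection, the relation on $\on{Reg}(\h)_h$ inherits all three properties from the partial order on $\on{SymId}(\h)_h$, and the claim follows.

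I expect no significant obstacle beyond the short linear-algebra identification $\mathcal{A}=P_{\on{Im}\mathcal{A}}$. As a self-contained sanity check one can also verify the axioms directly: reflexivity is immediate from $h'\in\h'$; antisymmetry follows because $\h'\subset\h''$ and $\h''\subset\h'$ force $\h'=\h''$, whence $h'=P_{\h'}(h)=P_{\h''}(h)=h''$; and transitivity rests on the compositional identity $P_{\h'}\circ P_{\h'''}=P_{\h'}$ for nested regular subspaces $\h'\subset\h'''$, applied to $h'''=P_{\h'''}(h)$ to give $P_{\h'}(h''')=P_{\h'}(h)=h'$, with $P_{\h'''}(h')=h'$ trivial since $h'\in\h'\subset\h'''$.
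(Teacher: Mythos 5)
Your proposal is correct and follows essentially the same route the paper intends: the paper states Proposition \ref{p4.8} without a separate proof precisely because it is the transport of the partial order on $\on{SymId}(\h)_{h}$ (Corollary \ref{c4.6}) along the bijection of Proposition \ref{c4.3}, with Lemma \ref{l4.7} supplying the translation $\on{Im}\mathcal{A}_1\subset\on{Im}\mathcal{A}_2$ and $\mathcal{A}_2(\beta_1)=\beta_1=\mathcal{A}_1(\beta_2)$ into the stated conditions on $(\h',h')$ and $(\h'',h'')$. Your added direct verification of the axioms, including the identity $P_{\h'}\circ P_{\h'''}=P_{\h'}$ for nested regular subspaces, is a sound and welcome supplement but not a departure from the paper's argument.
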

\begin{lem}\label{l4.9}
For any two elements $(\h',h'),(\h'',h'')\in \on{Reg}(\h)_{h}$, if $(\h',h')<(\h'',h'')$, then $\on{dim}\h'<\on{dim}\h''$.
\end{lem}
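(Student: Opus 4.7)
The plan is to argue by contradiction and exploit the defining constraint of $\on{Reg}(\h)_{h}$, namely that the second coordinate of any pair is forced to be the orthogonal projection of the fixed vector $h$ onto the first coordinate.

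First I would suppose, for contradiction, that $(\h',h') < (\h'',h'')$ yet $\dim\h' = \dim\h''$. Since by definition of the partial order in Proposition \ref{p4.8} we have $\h' \subset \h''$, the equality of dimensions forces $\h' = \h''$. This already makes the two pairs have identical first coordinates, so the strict inequality must come from the second coordinates.

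Next I would compare the second coordinates. Because $(\h',h'),(\h'',h'')\in \on{Reg}(\h)_{h}$, by the definition in (1.3) we have $h' = P_{\h'}(h)$ and $h'' = P_{\h''}(h)$. But $\h' = \h''$ implies $P_{\h'} = P_{\h''}$, hence
\begin{equation*}
h' = P_{\h'}(h) = P_{\h''}(h) = h''.
\end{equation*}
Therefore $(\h',h') = (\h'',h'')$, contradicting the assumption $(\h',h') < (\h'',h'')$. Hence $\dim\h' \ne \dim\h''$, and combined with $\h' \subset \h''$ this yields $\dim\h' < \dim\h''$.

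There is essentially no obstacle here; the only subtle point is realizing that the strictness of the inequality in $\on{Reg}(\h)_{h}$ is entirely controlled by the subspace part, because the second coordinate is a function of the first. One could alternatively transport the statement through the bijection of Proposition \ref{c4.3} and read it off from Lemma \ref{l4.7} by noting $\on{Im}\mathcal{A}_1 \subsetneq \on{Im}\mathcal{A}_2$, but the direct argument above is shorter and self-contained.
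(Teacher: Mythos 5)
Your proof is correct and follows essentially the same route as the paper's: the paper's (terser) proof simply asserts that strictness of the order forces $\h'\subset\h''$ with $\h'\neq\h''$, which is exactly the point you justify in detail by observing that the second coordinate $P_{\h'}(h)$ is determined by the first. Your version fills in the one step the paper leaves implicit; nothing more is needed.
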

\begin{proof}
According to Proposition \ref{p4.8}, $(\h',h')<(\h'',h'')$ gives $\h'\subset \h''$ and $\h'\neq \h''$, so $\on{dim}\h'<\on{dim}\h''$. 
\end{proof}

\begin{proofof}{\bf Proof of Theorem \ref{t1.2}}  
According to  Corollary \ref{c4.6} and Lemma \ref{l4.7}, we have a bijection preserving orders between $\on{Sc}(V_{\widehat{\h}}(1,0),\omega_{h})$ and $\on{Reg(\h)}_{h}$
by $\omega'\mapsto (\mathcal{A}(\h), \mathcal{A}(h))$.

As we know, $\on{Aut}(V_{\widehat{\h}}(1,0),\omega_{h})$ acts on  $\on{Sc}(V_{\widehat{\h}}(1,0),\omega_{h})$ as follows:
 for  any $\omega_{A,B}\in \on{Sc}(V_{\widehat{\h}}(1,0),\omega_{h})$ and $\sigma\in  \on{Aut}(V_{\widehat{\h}}(1,0),\omega_{h})$,  we have $ \sigma(V_2)=V_2$ and $ \sigma $ preserves the bilinear form $\<\cdot,\cdot\>$ on $\h$. Let $o\in \on{O}_d(\C)$ be the matrix  of $ \sigma$ with respect to a fixed orthonormal basis, then we have $\sigma(\omega_{A,B})=\omega_{oAo^{tr}, oB}\in \on{Sc}(V_{\widehat{\h}}(1,0),\omega_{h})$. Thus $ \on{Im}(oAo^{tr})=\sigma(\on{Im}(A))$.

For a $(\h',h')\in \on{Reg}(\h)_{h}$ and $\sigma\in  \on{Aut}(V_{\widehat{\h}}(1,0),\omega_{h})$, $(\sigma(\h'),\sigma(h'))\in \on{Reg}(\h)_{h}$.
 This gives an action of $\on{Aut}(V_{\widehat{\h}}(1,0),\omega_{h})$ on $\on{Reg}(\h)_{h}$.

According to the actions of  $\on{Aut}(V_{\widehat{\h}}(1,0),\omega_{h})$ on $\on{Sc}(V_{\widehat{\h}}(1,0),\omega_{h})$ and $\on{Reg}(\h)_{h}$, we know that the above bijection also preserves the actions of  $\on{Aut}(V_{\widehat{\h}}(1,0),\omega_{h})$ on $\on{Sc}(V_{\widehat{\h}}(1,0),\omega_{h})$ and $\on{Reg}(\h)_{h}$. So the first assertion 1) holds.


Let $\h_k=\on{Span}_{\C}\{h_1,\cdots,h_k\}$  and $\alpha_k$ is the projection of $h$ into $\h_k$ for $k=1,\cdots,d$. Then $(\h_1,\alpha_1),\cdots, (\h_1,\alpha_d)\in \on{Reg}(\h)_{h}$ and they satisfy the partial order as follows
$$
(\h_1,\alpha_1)<(\h_2,\alpha_2)<\cdots< (\h_{d-1},\alpha_{d-1})<(\h_{d},\alpha_{d})=(\h,h).
$$According to Lemma \ref{l4.9}, we know this is a longest  chain in  $\on{Reg}(\h)_{h}$ with respect to the partial order in Proposition \ref{p4.8}. Hence the second  assertion holds. \qed
\end{proofof}

\begin{defn}\label{d4.13}
For any two elements $(\h',h'),(\h'',h'')\in \on{Reg}(\h)_{h}$, we say 
 $(\h',h'),(\h'',h'')$ are equivalent, denoted by $(\h',h')\cong(\h'',h'')$ if
 there exists an orthogonal transformation   $\sigma$ of $\h$  fixing the vector $h$ such that $\sigma(\h')=\h''$ and $\sigma(h')=h''$.
\end{defn}
To get the orbits of $\on{Reg}(\h)_{h}$ under the action of $\on{Aut}(V_{\widehat{\h}}(1,0),\omega_{h})$, we need the following lemmas.

According to decomposition theory of orthogonal space, we have 
\begin{lem} \label{l4.21}
If $\h=\h'\oplus\h'^{\bot}$ and $h=h'+h'^{\bot}$ with respect to the nondegenrate bilinear form $\<\cdot,\cdot\>$ on $\h$, then  $(\h',h')\in \on{Reg}(\h)_h$ if and only if  $(\h'^{\bot},h'^{\bot})\in \on{Reg}(\h)_h$. 
\end{lem}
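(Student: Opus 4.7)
The plan is to reduce the lemma to two standard facts about orthogonal decompositions: regularity of $\h'$ is equivalent to regularity of $\h'^{\bot}$, and the decomposition $h=h'+h'^{\bot}$ automatically realizes $h'$ and $h'^{\bot}$ as the projections of $h$ onto $\h'$ and $\h'^{\bot}$ respectively.

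First I would invoke the standard equivalence from the theory of orthogonal spaces: for a subspace $\h'\subset \h$ with respect to the nondegenerate symmetric bilinear form $\langle\cdot,\cdot\rangle$ on $\h$, the restriction $\langle\cdot,\cdot\rangle|_{\h'}$ is nondegenerate if and only if $\h=\h'\oplus\h'^{\bot}$ as an internal direct sum; moreover, in that case $(\h'^{\bot})^{\bot}=\h'$ and the restriction $\langle\cdot,\cdot\rangle|_{\h'^{\bot}}$ is also nondegenerate. Thus $\h'\in\on{Reg}(\h)$ if and only if $\h'^{\bot}\in\on{Reg}(\h)$, and the hypothesis $\h=\h'\oplus\h'^{\bot}$ already guarantees both.

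Next I would observe that the hypothesis $h=h'+h'^{\bot}$ with $h'\in\h'$ and $h'^{\bot}\in\h'^{\bot}$ is precisely the unique decomposition of $h$ with respect to the direct sum $\h=\h'\oplus\h'^{\bot}$. By the definition of the orthogonal projection map, this forces $P_{\h'}(h)=h'$ and $P_{\h'^{\bot}}(h)=h'^{\bot}$ simultaneously. Combining this with the first step, the pair $(\h',h')$ satisfies the two conditions defining membership in $\on{Reg}(\h)_h$ (namely $\h'$ regular and $h'=P_{\h'}(h)$) exactly when the pair $(\h'^{\bot},h'^{\bot})$ does, proving the biconditional.

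There is essentially no obstacle here; the statement is a direct translation of two standard facts about regular subspaces. The only point requiring care is to make sure that the same direct sum decomposition of $\h$ is used to extract both projections, which is guaranteed by the assumption that the two given vectors sum to $h$ and lie in complementary pieces.
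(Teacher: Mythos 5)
Your proof is correct and coincides with what the paper intends: the paper states this lemma without proof, merely citing ``decomposition theory of orthogonal space,'' and your argument supplies exactly the two standard facts being invoked (regularity of $\h'$ is equivalent to regularity of $\h'^{\bot}$ via the internal direct sum, and the given decomposition of $h$ identifies both projections at once). You also correctly observe that under the stated hypotheses both memberships hold simultaneously, so the biconditional is immediate.
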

\begin{lem}\label{l4.15}
Let  $(\h',h),(\h'',h)\in \on{Reg}(\h)_{h}$.   Then  $(\h',h)\cong(\h'',h)$ if and only if $\on{dim}\h'=\on{dim}\h''$.
\end{lem}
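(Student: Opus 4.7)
\begin{proofof}{Proof plan for Lemma \ref{l4.15}.}
The forward direction is immediate: an orthogonal transformation $\sigma$ with $\sigma(\h')=\h''$ restricts to a linear isomorphism $\h'\to\h''$, hence $\on{dim}\h'=\on{dim}\h''$. The substance is the converse, for which I plan to construct explicit orthonormal bases of $\h'$ and $\h''$ that agree on $h$ and then extend to orthonormal bases of $\h$ via the regular decompositions $\h=\h'\oplus\h'^\perp=\h''\oplus\h''^\perp$ given by Lemma \ref{l4.21}. Since $(\h',h),(\h'',h)\in\on{Reg}(\h)_h$ means $h=P_{\h'}(h)=P_{\h''}(h)$, the vector $h$ lies in both $\h'$ and $\h''$. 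The plan is to split into the three cases $h=0$, $\<h,h\>\neq 0$, and $h\neq 0$ with $\<h,h\>=0$, matching the style used in Lemma \ref{l4.16} and Corollary 3.5.

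If $h=0$, then any linear bijection $\tau\colon \h'\to\h''$ carrying an orthonormal basis of $\h'$ to an orthonormal basis of $\h''$ (which exists since $\on{dim}\h'=\on{dim}\h''$ and both forms are nondegenerate over $\C$) is an isometry; one also chooses an isometry $\h'^\perp\to\h''^\perp$ (same dimension, both regular), and the direct sum $\sigma$ fixes $h=0$ and sends $\h'$ to $\h''$. If $\<h,h\>\neq 0$, set $\xi_1=h/\sqrt{\<h,h\>}$, extend to orthonormal bases $\xi_1,\xi_2',\dots,\xi_k'$ of $\h'$ and $\xi_1,\xi_2'',\dots,\xi_k''$ of $\h''$, and further extend each to orthonormal bases of $\h$ by picking orthonormal bases of the respective orthogonal complements. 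The linear map $\sigma$ that pairs up these two orthonormal bases is orthogonal, fixes $\xi_1$ (hence fixes $h$), and sends $\h'$ to $\h''$.

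The remaining case, $h\neq 0$ with $\<h,h\>=0$, is the one I expect to be most delicate. Here $\h'$ must have dimension at least $2$ since a regular subspace cannot consist of a single isotropic line. I would imitate the trick in Lemma \ref{l4.16}: since $\h'$ is regular and $h\in\h'$, there exists a vector $h'_*\in\h'$ with $\<h'_*,h'_*\>=0$ and $\<h,h'_*\>=1$ (pair $h$ with any vector of $\h'$ not orthogonal to it, modify inside $h^\perp\cap\h'$ to kill its own norm). Forming $\xi_1=(h+h'_*)/\sqrt 2,\ \xi_2=(h-h'_*)/\sqrt{-2}$ gives an orthonormal pair in $\h'$ with $h=(\sqrt 2\,\xi_1+\sqrt{-2}\,\xi_2)/2$; extend to an orthonormal basis of $\h'$. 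Apply the same construction inside $\h''$ to obtain an orthonormal basis beginning with $\eta_1,\eta_2$ with the same hyperbolic relation to $h$. The map sending the $\h'$-basis to the $\h''$-basis is an isometry fixing $h$; extending it by any isometry $\h'^\perp\to\h''^\perp$ gives the required $\sigma$.

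The main obstacle is ensuring in the isotropic case that the pair $(\xi_1,\xi_2)$ can actually be chosen so that $h$ has the same coordinate expression in both the $\xi$-basis of $\h'$ and the $\eta$-basis of $\h''$; this reduces to verifying the existence of a partner vector $h'_*\in\h'$ with the prescribed pairings, which uses only that $\h'$ is regular of dimension $\geq 2$ and contains the isotropic $h$, and then the construction is symmetric on the $\h''$ side. Once this is in place, gluing via Lemma \ref{l4.21} is straightforward. \qed
\end{proofof}
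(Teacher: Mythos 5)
Your proposal is correct and follows essentially the same route as the paper: in the anisotropic case both extend $h/\sqrt{\<h,h\>}$ to orthonormal bases of $\h'$ and $\h''$ and pair them, and in the isotropic case both invoke the hyperbolic-pair construction of Lemma \ref{l4.16} before extending by an isometry of the orthogonal complements. Your version merely spells out more explicitly that $h$ has the same coordinates in the two hyperbolic bases (and adds the trivial $h=0$ case), which is a welcome but not essentially different refinement.
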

\begin{proof}
When $\<h,h\>\neq 0$, let $\xi_1=\frac{h}{\sqrt{\<h,h\>}}$.  If $\on{dim}\h'=\on{dim}\h''=k$, then  $\xi_1\in \h'$ can be extended  to an orthonormal basis $\xi_1,\xi_2, \cdots,\xi_k$ of $\h'$ and  $\xi_1\in \h''$ can be extended  to an orthonormal basis $\xi_1,\eta_2,\cdots,\eta_k$ of $\h''$. Define $\rho:\h'\rightarrow \h''$ by $\xi_1\mapsto \xi_1, \xi_i\mapsto \eta_i$ for $i=2,\cdots,k$. It can be extended to  an orthogonal transformation of $\h$ by linearity preserving
non-degenerate bilinear form $\<\cdot,\cdot\>$ on $\h$ and  $\rho(h)=h$. And $\rho$ can generate an automorphism $\widehat{\rho}$ of $(V_{\widehat{\h}}(1,0),\omega_{h}
)$ such that $\widehat{\rho}((\h',h))=(\h'',h)$. Hence $(\h',h)\cong(\h'',h)$.

When $\<h,h\>= 0$ and $h\neq 0$, then we have $\on{dim}\h'=\on{dim}\h''>1$. By Lemma \ref{l4.16}, we get an orthogonal  linear isomorphism $\rho$ from $\h'$ to $\h''$ such that $\rho(h)=h$. It can be extended to  an orthogonal transformation of $\h$ by linearity preserving
non-degenerate bilinear form $\<\cdot,\cdot\>$ on $\h$ and  $\rho(h)=h$. Thus $\rho$ can  generate an automorphism $\widehat{\rho}$ of $(V_{\widehat{\h}}(1,0),\omega_{h}
)$ such that $\widehat{\rho}((\h',h))=(\h'',h)$. Hence $(\h',h)\cong(\h'',h)$.

Conversely,  if $(\h',h)\cong(\h'',h)$,   there exists an automorphism $\widehat{\rho}$ of $(V_{\widehat{\h}}(1,0),\omega_{h})$ such that $\widehat{\rho}((\h',h))=(\h'',h)$, then we have $\on{dim}\h'=\on{dim}\h''$.\end{proof}
\begin{lem}\label{l4.22}
 For  $(\h',h'),(\h'',h)\in \on{Reg}(\h)_{h}$,
when $\on{dim}\h'= \on{dim}\h''$, if $h'\neq h$,   then $(\h',h')\ncong(\h'',h)$. \end{lem}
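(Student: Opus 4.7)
\medskip

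\noindent\textbf{Proof proposal for Lemma \ref{l4.22}.}
My plan is to argue by contradiction using the orthogonal decomposition associated with a regular subspace. Suppose, towards a contradiction, that $(\h',h')\cong(\h'',h)$. By Definition \ref{d4.13}, there exists an orthogonal transformation $\sigma\in\on{O}(\h)$ with $\sigma(h)=h$, $\sigma(\h')=\h''$, and $\sigma(h')=h$ (the latter because the second component of the equivalent pair is $h$).

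The key step is to exploit the fact that $\sigma$, being orthogonal, preserves orthogonal complements. Since $(\h',h')\in\on{Reg}(\h)_{h}$ the subspace $\h'$ is regular, so $\h=\h'\oplus\h'^{\perp}$, and by definition $h'=P_{\h'}(h)$; hence the unique orthogonal decomposition of $h$ is
\begin{equation*}
h = h' + (h-h'), \qquad h'\in\h',\ h-h'\in\h'^{\perp}.
\end{equation*}
Applying $\sigma$ and using $\sigma(\h'^{\perp})=\h''^{\perp}$ gives
\begin{equation*}
h = \sigma(h) = \sigma(h') + \sigma(h-h') = h + \sigma(h-h'),
\end{equation*}
so $\sigma(h-h')=0$. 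Since $\sigma$ is invertible, this forces $h=h'$, contradicting the hypothesis $h'\neq h$. Therefore $(\h',h')\ncong(\h'',h)$.

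I expect no serious obstacle: the whole argument is a one-line application of orthogonality of $\sigma$ combined with uniqueness of the decomposition $h=P_{\h'}(h)+(h-P_{\h'}(h))$ into components in $\h'$ and $\h'^{\perp}$. The dimension hypothesis $\on{dim}\h'=\on{dim}\h''$ is used only implicitly to make the equivalence conceivable (without it, $\sigma$ could not send $\h'$ to $\h''$ in the first place); it plays no role in the contradiction itself, which turns solely on the condition $h'\neq h$.
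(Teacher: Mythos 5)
Your proof is correct and is essentially the paper's own (one-line) argument: everything hinges on the fact that any equivalence $\sigma$ must fix $h$ while also sending $h'$ to $h$, which is impossible for an injective map when $h'\neq h$. The detour through $\h'^{\perp}$ is unnecessary --- from $\sigma(h')=h=\sigma(h)$ and injectivity of $\sigma$ you get $h'=h$ directly.
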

\begin{proof}
 When $\on{dim}\h'= \on{dim}\h''$,  if $h'\neq h$, then $(\h',h')\ncong(\h'',h)$ since each automorphism $\widehat{\rho}$ of $(V_{\widehat{\h}}(1,0),\omega_{h})$  satisfies $\widehat{\rho}(h)=h$.
\end{proof}
\begin{lem}\label{l4.17}
For  $(\h',h'),(\h'',h'')\in \on{Reg}(\h)_{h}$, if $h', h''\neq 0,h$ and $\on{dim}\h'= \on{dim}\h''$, then  $(\h',h')\cong (\h'',h'')$ if and only if $(h',h')=(h'',h'')$.
\end{lem}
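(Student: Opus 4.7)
The forward direction requires no work: any orthogonal transformation of $\h$ preserves $\<\cdot,\cdot\>$, so $\sigma(h')=h''$ forces $\<h',h'\>=\<h'',h''\>$ at once. For the converse, the plan is to construct an element $\sigma\in\on{O}(\h)$ with $\sigma(h)=h$, $\sigma(\h')=\h''$ and $\sigma(h')=h''$, after which Theorem~\ref{t1.1} lifts $\sigma$ to an automorphism of $(V_{\widehat{\h}}(1,0),\omega_h)$ realising the equivalence $(\h',h')\cong(\h'',h'')$.

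To set up, I would first use regularity to write $h=h'+(h-h')$ and $h=h''+(h-h'')$ with $h-h'\in(\h')^\perp$ and $h-h''\in(\h'')^\perp$; both residuals are nonzero because $h',h''\neq h$. The identity $\<h-h',h-h'\>=\<h,h\>-\<h',h'\>$ (which follows from $h'\perp(h-h')$) combined with $\<h',h'\>=\<h'',h''\>$ gives $\<h-h',h-h'\>=\<h-h'',h-h''\>$, the only numerical compatibility we will need. The construction of $\sigma$ then proceeds in three stages. \emph{Stage one:} build an isometry $\phi_1\colon\h'\to\h''$ with $\phi_1(h')=h''$. If $\<h',h'\>\neq 0$, normalise $h'$ and $h''$ and extend each to an orthonormal basis of its respective subspace, exactly as in Lemma~\ref{l4.15}. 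If $\<h',h'\>=0$, regularity of $\h'$ forces $\on{dim}\h'\geq 2$ (a regular line cannot contain a nonzero isotropic vector), and the hyperbolic-pair argument used in the proof of Lemma~\ref{l4.16} produces adapted orthonormal bases of $\h'$ and of $\h''$ whose first two members encode $h'$ and $h''$ respectively; matching these bases delivers $\phi_1$. \emph{Stage two:} extend $\phi_1$ to $\phi_2\colon\h'\oplus\C(h-h')\to\h''\oplus\C(h-h'')$ by $\phi_2(h-h')=h-h''$; this is a well-defined isometry because the two summands are mutually orthogonal on each side and $\<h-h',h-h'\>=\<h-h'',h-h''\>$, and by construction $\phi_2(h)=\phi_1(h')+(h-h'')=h$. \emph{Stage three:} the orthogonal complements of $\h'\oplus\C(h-h')$ and $\h''\oplus\C(h-h'')$ in $\h$ are regular subspaces of the same dimension, hence isometric over $\C$ since any two nondegenerate symmetric bilinear forms of fixed dimension on a complex vector space are equivalent; gluing $\phi_2$ with any such isometry of the complements produces the required $\sigma$.

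The main obstacle will be stage one in the isotropic subcase, where I must rerun the Witt-style argument of Lemma~\ref{l4.16} inside the two possibly small subspaces $\h'$ and $\h''$ rather than inside the ambient $\h$, and carefully verify that the hypothesis $h',h''\neq 0$ together with regularity genuinely rules out $\on{dim}\h'=1$, leaving enough room to produce the hyperbolic pair that matches $h'$ with $h''$. Once $\phi_1$ is in hand, stages two and three are routine bookkeeping with regular orthogonal decompositions.
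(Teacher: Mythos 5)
Your overall strategy---build $\sigma\in\on{O}(\h)$ fixing $h$ by matching adapted bases of $\h'$, $\h''$ and of suitable complements, then lift via Theorem~\ref{t1.1}---is the same as the paper's, and your stage one is sound, including the observation that regularity forces $\on{dim}\h'\geq 2$ when $h'$ is a nonzero isotropic vector. The gap is in stages two and three, in the subcase where the residual $h-h'$ is isotropic, i.e.\ $\<h-h',h-h'\>=\<h,h\>-\<h',h'\>=0$. This subcase is not vacuous: it occurs exactly when $\<h',h'\>=\<h,h\>$ (the orbits $I_4$ of Theorem~\ref{t1.3}) and always when $\<h,h\>=\<h',h'\>=0$ (the orbits $J_4$ of Theorem~\ref{t1.4}). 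In that situation $W:=\h'\oplus\C(h-h')$ is a \emph{degenerate} subspace: $h-h'$ is orthogonal to $\h'$ and to itself, so it lies in the radical of $W$ and hence also in $W^{\perp}$. Consequently $W^{\perp}$ is not regular, $W+W^{\perp}\neq\h$, and ``gluing $\phi_2$ with an isometry of the complements'' does not even define a map on all of $\h$, let alone an orthogonal one. So stage three, as written, fails precisely in the cases the lemma most needs.

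The repair is either to invoke the full Witt extension theorem over $\C$ (an isometry defined on an arbitrary, possibly degenerate, subspace of a nondegenerate quadratic space extends to the whole space), or to do what the paper does: never isolate the line $\C(h-h')$, but work instead with the entire regular complement $(\h')^{\perp}$. One needs an isometry $(\h')^{\perp}\to(\h'')^{\perp}$ carrying $h-h'$ to $h-h''$; both are regular spaces of the same dimension, and in the problematic subcase each contains a nonzero isotropic vector (which forces $\on{dim}(\h')^{\perp}\geq 2$, consistent with the bound $k\leq d-2$ in $I_4$ and $J_4$), so the hyperbolic-pair argument of Lemma~\ref{l4.16}, run inside $(\h'')^{\perp}$, produces such a map. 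Gluing it with your $\phi_1$ along the genuine orthogonal decomposition $\h=\h'\oplus(\h')^{\perp}$ then yields the required $\sigma$ with $\sigma(h)=\sigma(h')+\sigma(h-h')=h''+(h-h'')=h$.
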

\begin{proof}
If $\<h,h\>\neq 0$, we have the following cases

a) When $\<h',h'\>=\<h'',h''\>\neq 0, \<h,h\>$,
let $\xi_1,\cdots, \xi_k$ be an orthonormal basis of $\h'$ and $\eta_1,\cdots,\eta_k$ be an orthonormal basis 
of $\h''$, respectively. Here $\xi_1=\frac{h'}{\sqrt{\<h',h'\>}}, \eta_1=\frac{h''}{\sqrt{\<h'',h''\>}}$. 
 Define $\rho$ by $\xi_i\mapsto \eta_i$ for $i=1,\cdots,k$. Then it can be extended to a linear map from
 $\h'$ to $\h''$. 
Since  $\h=\h'\oplus \h'^{\bot}=\h''\oplus \h''^{\bot}$, where $\h'^{\bot}$ is the orthonormal complement space of $\h'$ in $\h$, then we can suppose that $h=h'+h'^{\bot}=h''+h''^{\bot}$. Since  $\<h',h'\>=\<h'',h''\>\neq 0, \<h,h\>$, then 
 $\<h'^{\bot},h'^{\bot}\>=\<h''^{\bot},h''^{\bot}\>\neq 0$. Let $\xi_{k+1},\cdots, \xi_d$ be an orthonormal basis of $\h'^{\bot}$ and $\eta_{k+1},\cdots, \eta_d$ be an orthonormal basis of $\h''^{\bot}$. Here $\xi_{k+1}=\frac{h'^{\bot}}{\<h'^{\bot},h'^{\bot}\>}$ and $\eta_{k+1}=\frac{h''^{\bot}}{\<h''^{\bot},h''^{\bot}\>}$. By defining $\xi_{i}\mapsto\eta_i$ for $i=k+1,\cdots,d$, we can extend $\rho$ to an automorphism $\rho$ of $\h$ with preserving the nondegenerate bilinear form $\<\cdot,\cdot\>$. We can find $\rho(h)=h$. For such an automorphism $\rho$
of $\h$, it generates an automorphism $\widehat{\rho}$ of $(V_{\widehat{\h}}(1,0),\omega_{h}
)$,  and $\widehat{\rho}((\h',h'))=(\h'',h'')$. Hence $(\h',h')\cong(\h'',h'')$.

b) When $\<h',h'\>=\<h'',h''\>=\<h,h\>$, 
let $\xi_1,\cdots, \xi_k$ be an orthonormal basis of $\h'$ and $\eta_1,\cdots,\eta_k$ be an orthonormal basis 
of $\h''$, respectively. Here $\xi_1=\frac{h'}{\sqrt{\<h',h'\>}}, \eta_1=\frac{h''}{\sqrt{\<h'',h''\>}}$. 
 Define $\rho'$ by $\xi_i\mapsto \eta_i$ for $i=1,\cdots,k$. Then it can be extended to a linear map from $\h'$ to $\h''$ and $\rho'(h')=h''$. 
Since  $\h=\h'\oplus \h'^{\bot}=\h''\oplus \h''^{\bot}$, where $\h'^{\bot}(\h''^{\bot})$ is the orthonormal complement space of $\h'(\h'')$ in $\h$, then we can suppose that $h=h'+h'^{\bot}=h''+h''^{\bot}$. 
Since  $\<h',h'\>=\<h'',h''\>=\<h,h\>$, then 
 $\<h'^{\bot},h'^{\bot}\>=\<h''^{\bot},h''^{\bot}\>=0$. By Lemma \ref{l4.16},  $\rho'$ can be extended to an orthogonal transformation $\rho$ of $\h$ such that $\rho(h'^{\bot})=h''^{\bot}$.  So $\rho(h)=h$, and it  generates an automorphism $\widehat{\rho}$ of $(V_{\widehat{\h}}(1,0),\omega_{h})$,  and $\widehat{\rho}((\h',h'))=(\h'',h'')$. Hence $(\h',h')\cong(\h'',h'')$.

c) When $\<h',h'\>=\<h'',h''\>=0$,
where $\h'^{\bot}(resp. ~\h''^{\bot})$ is the orthonormal complement space of $\h'(resp. ~\h'')$ in $\h$, then we can suppose that $h=h'+h'^{\bot}=h''+h''^{\bot}$. 
Since  $\<h',h'\>=\<h'',h''\>= 0$, then 
 $\<h'^{\bot},h'^{\bot}\>=\<h''^{\bot},h''^{\bot}\>=\<h,h\>$. 
 As similar as the case b), there exists  an automorphism $\widehat{\rho}$ of $(V_{\widehat{\h}}(1,0),\omega_{h})$,  and $\widehat{\rho}((\h'^{\bot},h'^{\bot}))=(\h''^{\bot},h''^{\bot})$. Hence $(\h',h')\cong(\h'',h'')$.
  
  If $\<h,h\>=0$, we have the following cases
 
 When  $\<h',h'\>=\<h'',h''\>\neq 0$, as similar as the case a),  we can prove  that $(\h',h')\cong(\h'',h'')$. 
 
 When  $\<h',h'\>=\<h'',h''\>=0$,  by Lemma \ref{l4.16}, there exists an orthogonal linear map $\rho_1$ from $\h'$ to $\h''$ such that $\rho_1(h')=h''$. Since $\<h,h\>=0$, then $\<h'^{\bot},h'^{\bot}\>=\<h''^{\bot},h''^{\bot}\>=0$, so by  Lemma \ref{l4.16}, we know that there exists  an orthogonal linear map $\rho_2$ from $\h'^{\bot}$ to $\h''^{\bot}$ such that $\rho_1(h'^{\bot})=h''^{\bot}$. Hence there is an orthogonal transformation  $\rho$ of $\h$ such that  its restriction to $\h'$ is $\rho_1$ and its restriction to $\h'^{\bot}$ is $\rho_2$.
 Hence $\rho(h)=h$ and it can be extended to an automorphism $\widehat{\rho}$ of $(V_{\widehat{\h}}(1,0),\omega_{h})$ such that $\widehat{\rho}((\h',h'))=(\h'',h'')$. Thus, 
 $(\h',h')\cong(\h'',h'')$

Conversely,  when $(\h',h')\cong (\h'',h'')$,   we  get  $\<h',h'\>=\<h'',h''\>$ by Definition \ref{d4.13}. 
\end{proof}
By the standard theory of linear algebra, we have the following lemmas
\begin{lem}\label{l4.18}
For $(\h',0),(\h'',h'')\in \on{Reg}(\h)_{h}$, when $\on{dim}\h'=\on{dim}\h''$,
we have  $(\h',0)\cong (\h'',h'')$ if and only if $h''= 0$.
\end{lem}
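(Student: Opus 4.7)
The plan is to split Lemma 4.18 into its two implications and handle them separately, exploiting the fact that orthogonal complements interact well with the setup $\on{Reg}(\h)_h$ via Lemma 4.21, and then reducing the nontrivial direction to Lemma 4.15. The forward implication will be essentially a one-line consequence of linearity, so the substance of the argument lies in the reverse direction.

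For the implication ($\Rightarrow$), I would unpack Definition 4.13: an equivalence $(\h',0)\cong(\h'',h'')$ provides an orthogonal transformation $\sigma\in\on{O}(\h)$ with $\sigma(h)=h$, $\sigma(\h')=\h''$, and $\sigma(0)=h''$. Since $\sigma$ is linear, $\sigma(0)=0$, forcing $h''=0$. No further work is required.

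For the implication ($\Leftarrow$), assume $h''=0$. The defining condition for $(\h',0),(\h'',0)\in\on{Reg}(\h)_h$ is $P_{\h'}(h)=0=P_{\h''}(h)$, which says exactly $h\in \h'^{\perp}\cap \h''^{\perp}$. Applying Lemma 4.21 to each orthogonal decomposition yields $(\h'^{\perp},h),(\h''^{\perp},h)\in\on{Reg}(\h)_h$, and by assumption these pairs have the same dimension $d-\on{dim}\h'=d-\on{dim}\h''$. Lemma 4.15 now produces an orthogonal transformation $\sigma\in\on{O}(\h)$ satisfying $\sigma(h)=h$ and $\sigma(\h'^{\perp})=\h''^{\perp}$. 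Because $\sigma$ preserves the bilinear form, it sends orthogonal complements to orthogonal complements, so $\sigma(\h')=\sigma((\h'^{\perp})^{\perp})=(\h''^{\perp})^{\perp}=\h''$; together with $\sigma(0)=0=h''$, this gives $(\h',0)\cong(\h'',h'')$.

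There is no serious obstacle here: the only subtlety is the observation that an orthogonal map carrying $\h'^{\perp}$ to $\h''^{\perp}$ automatically carries $\h'$ to $\h''$, which is immediate from $\sigma(V^{\perp})=\sigma(V)^{\perp}$. The work is thus entirely concentrated in translating between a regular subspace and its complement and invoking Lemma 4.15.
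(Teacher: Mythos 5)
Your proof is correct. Note first that the paper does not actually supply an argument for this lemma: it is stated (together with Lemma \ref{l4.20}) under the heading ``by the standard theory of linear algebra,'' so there is no written proof to compare against. Your forward direction ($\sigma(0)=0$ by linearity, hence $h''=0$) is the intended triviality. For the reverse direction, your reduction is a clean way to make the ``standard linear algebra'' precise: the condition $P_{\h'}(h)=0=P_{\h''}(h)$ is exactly $h\in\h'^{\perp}\cap\h''^{\perp}$, Lemma \ref{l4.21} then places $(\h'^{\perp},h)$ and $(\h''^{\perp},h)$ in $\on{Reg}(\h)_{h}$ with equal dimensions, and Lemma \ref{l4.15} hands you an orthogonal $\sigma$ fixing $h$ with $\sigma(\h'^{\perp})=\h''^{\perp}$; the observation $\sigma(\h')=\sigma((\h'^{\perp})^{\perp})=(\h''^{\perp})^{\perp}=\h''$ is valid because $\sigma$ preserves the form and both subspaces are regular. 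What this route buys is that all case analysis on $\<h,h\>$ (nonzero, isotropic nonzero, or $h=0$) is already absorbed into the proof of Lemma \ref{l4.15}, rather than being redone via Witt's extension theorem from scratch; the only caveat is that Lemma \ref{l4.15} as written in the paper only treats $h\neq 0$ explicitly, so for $h=0$ (where the statement degenerates to Lemma \ref{l4.20}) you are implicitly relying on the standard fact that $\on{O}(\h)$ acts transitively on regular subspaces of a fixed dimension. This is a minor point and does not affect the correctness of your argument in the cases where the lemma is actually used (Theorems \ref{t1.3} and \ref{t1.4}, where $h\neq 0$).
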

\begin{lem}\label{l4.20}
For $(\h',0),(\h'',0)\in \on{Reg}(\h)_{h}$, then  $(\h',0)\cong (\h'',0)$ if and only if  $\on{dim}\h'=\on{dim}\h''$.
\end{lem}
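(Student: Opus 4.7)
The plan is to prove the forward direction by a triviality and the reverse direction by constructing the required orthogonal transformation of $\h$ piecewise on the decompositions $\h=\h'\oplus \h'^{\perp}$ and $\h=\h''\oplus \h''^{\perp}$. The hypothesis $(\h',0),(\h'',0)\in \on{Reg}(\h)_h$ says exactly that $h\in \h'^{\perp}$ and $h\in \h''^{\perp}$, and by Lemma \ref{l4.21} both $\h'^{\perp}$ and $\h''^{\perp}$ are regular.

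For the forward direction, if $(\h',0)\cong(\h'',0)$, then by Definition \ref{d4.13} there is an orthogonal transformation $\sigma$ of $\h$ with $\sigma(\h')=\h''$, and since orthogonal transformations are invertible linear maps they preserve dimension.

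For the reverse direction, assume $\on{dim}\h'=\on{dim}\h''$. I will produce $\sigma \in \on{O}(\h)$ fixing $h$ with $\sigma(\h')=\h''$ by assembling $\sigma=\sigma_1\oplus\sigma_2$, where $\sigma_1:\h'\to \h''$ is an orthogonal isomorphism and $\sigma_2:\h'^{\perp}\to\h''^{\perp}$ is an orthogonal isomorphism fixing $h$. The existence of $\sigma_1$ is immediate: both $\h'$ and $\h''$ are regular of the same dimension, so each admits an orthonormal basis, and matching the bases defines an orthogonal isomorphism. For $\sigma_2$, I will split into three cases depending on the nature of $h$. If $h=0$, then any orthogonal isomorphism $\h'^{\perp}\to\h''^{\perp}$ (again obtained by matching orthonormal bases of these equidimensional regular subspaces) works. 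If $\<h,h\>\ne 0$, I normalize $\xi_1=\eta_1=h/\sqrt{\<h,h\>}$, extend each to an orthonormal basis of $\h'^{\perp}$ and $\h''^{\perp}$ respectively (possible because they are regular and contain a regular line $\C h$), and define $\sigma_2$ by matching these bases, which automatically fixes $h$. If $h\ne 0$ but $\<h,h\>=0$, then $\h'^{\perp}$ and $\h''^{\perp}$ both contain a nonzero isotropic vector, so by the argument of Lemma \ref{l4.16} applied inside these two regular subspaces (which have dimension at least $2$ since $h$ is isotropic and the containing space is regular), I can construct hyperbolic pairs $(h,h^*)$ in $\h'^{\perp}$ and $(h,h^{**})$ in $\h''^{\perp}$, extend each to a full orthonormal basis of the ambient regular subspace, and produce $\sigma_2$ matching these bases with $\sigma_2(h)=h$.

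With $\sigma_1$ and $\sigma_2$ in hand, setting $\sigma=\sigma_1\oplus\sigma_2$ gives an orthogonal transformation of $\h$ with $\sigma(h)=\sigma_2(h)=h$ and $\sigma(\h')=\h''$, so by Definition \ref{d4.13} we conclude $(\h',0)\cong(\h'',0)$.

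The main technical point is the isotropic case of the construction of $\sigma_2$: I must ensure that within each of $\h'^{\perp}$ and $\h''^{\perp}$ there is enough room to fit a hyperbolic pair containing $h$, which requires dimension at least two, and this is where I rely on the ambient regularity forcing the presence of a partner vector for the isotropic $h$. Once that is in place, all the other pieces reduce to straightforward orthonormal basis matching, mirroring the explicit constructions already used in Lemmas \ref{l4.15} and \ref{l4.17}.
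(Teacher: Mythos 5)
Your proof is correct and consistent with the paper's treatment: the paper omits the proof of this lemma entirely, citing only ``the standard theory of linear algebra,'' and your piecewise construction $\sigma=\sigma_1\oplus\sigma_2$ on the orthogonal decompositions $\h=\h'\oplus\h'^{\perp}=\h''\oplus\h''^{\perp}$, with the isotropic case of $h$ handled via the hyperbolic-pair argument of Lemma \ref{l4.16}, is exactly the orthonormal-basis-matching argument the paper itself uses for the adjacent Lemmas \ref{l4.15} and \ref{l4.17}. No gaps.
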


\begin{proofof}{\bf Proof of Theorem \ref{t1.3}}
By Lemma \ref{l4.15}, we know that for $(\h',h)\in \on{Reg}(\h)_{h}$, when $\on{dim}\h'=d$, the orbit determined by $(\h',h)$ contains only one element $(\h,h)$. Considering orbits determined by $(\h',h)\in  \on{Reg}(\h)_{h}$, by Lemma \ref{l4.15} and Lemma \ref{l4.22}, we know that 
$\on{dim}\h'=k$ for $k=1,\cdots, d$ give $d-1$ orbits $I_1(k)$.

Obviously, for $(\h',h)\in \on{Reg}(\h)_{h}$,  when $\on{dim}\h'=0$, the orbit contains only one element 
$(0,0)$. By Lemma \ref{l4.18} and Lemma \ref{l4.20}, considering orbits determined by $(\h',0)\in  \on{Reg}(\h)_{h}$, we know that $\on{dim}\h'=k$ for $k=0,1,\cdots, d-1$ give $d$ orbits $I_2(k)$.

By Lemma \ref{l4.17} ,  considering orbits determined by $(\h',h')\in  \on{Reg}(\h)_{h}$ for $h'\neq 0, h$, when $y:=\<h',h'\>\neq 0, \<h,h\>$ and  $\on{dim}\h'$ takes a value among $1,\cdots, d-1$,  $(\h',h')$ determines only one orbit $I_3(k,y)$.

By Lemma \ref{l4.17} and Lemma \ref{l4.21},  considering orbits determined by $(\h',h')\in  \on{Reg}(\h)_{h}$ for $h'\neq 0, h$ and $y:=\<h',h'\>=\<h,h\>$,  we know that $\on{dim}\h'=k$ for $k=1,\cdots, d-2$ give orbits $I_4(k,y)$.

By Lemma \ref{l4.17} and Lemma \ref{l4.21},  considering orbits determined by $(\h',h')\in  \on{Reg}(\h)_{h}$ for $h'\neq 0$ and $y:=\<h',h'\>=0$,  we know that $\on{dim}\h'=k$ for $k=2,\cdots, d-1$ give orbits $I_5(k,y)$.
\qed
\end{proofof}

\begin{proofof}{\bf Proof of Theorem \ref{t1.4}}
By Lemma \ref{l4.15}, we know that for $(\h',h)\in \on{Reg}(\h)_{h}$, when $\on{dim}(\h')=d$, the orbit determined by $(\h',h)$ contains only one element $(\h,h)$. Considering orbits determined by $(\h',h)\in  \on{Reg}(\h)_{h}$, by Lemma \ref{l4.21}--Lemma \ref{l4.22}, we know that 
$\on{dim}\h'=k$ for $k=2,\cdots, d$ give $d-1$ orbits $J_1(k)$.

Obviously, for $(\h',h)\in \on{Reg}(\h)_{h}$,  when $\on{dim}\h'=0$, the orbit contains only one element 
$(0,0)$. By Lemma \ref{l4.21} and Lemma \ref{l4.18}--Lemma \ref{l4.20}, considering orbits determined by $(\h',0)\in  \on{Reg}(\h)_{h}$, we know that $\on{dim}\h'=k$ for $k=0,1,\cdots, d-2$ give $d-1$ orbits $J_2(k)$.

By Lemma \ref{l4.17},  considering orbits determined by $(\h',h')\in  \on{Reg}(\h)_{h}$, when $y:=(h',h')\neq 0$ and  $\on{dim}\h'$ takes a value among $1,\cdots, d-1$,  $(\h',h')$ determines only one orbit $J_3(k,y)$.

By Lemma \ref{l4.21} and Lemma \ref{l4.17},  considering orbits determined by $(\h',h')\in  \on{Reg}(\h)_{h}$ for $h'\neq 0, h$ and $y:=\<h',h'\>=\<h,h\>=0$,  we know that $\on{dim}\h'=k$ for $k=2,\cdots, d-2$ give orbits $J_4(k,y)$.\qed
\end{proofof}

\section{Conformally closed semi-confromal subalgebras  of the Heisenberg vertex operator algebra $(V_{\widehat{\h}}(1,0),\omega_h)$.}
In this section,  we shall describe the conformally closed semi-conformal subalgebras of the Heisenberg vertex operator algebra $(V_{\widehat{\h}}(1,0), \omega_{h})$.

For $\omega' \in \on{Sc}(V_{\widehat{\h}}(1,0),\omega_{h})$ as defined in \eqref{f3.2} with the corresponding symmetric idempotent matrix $A_{\omega'}$ given in \eqref{f3.3} and a vector $B_{\omega'}=A_{\omega'}\Lambda$.   By Theorem~\ref{t1.2}, we know that
$(\on{Im}\mathcal{A}_{\omega'},\mathcal{A}_{\omega'}(h))\in \on{Reg}(\h)_{h}$ corresponding to $\omega'$.
\begin{prop}
\label{p3.7}
$$\on{Im}\mathcal{A}_{\omega'}=(\on{Ker}_{V_{\widehat{\h}}(1,0)}(L(-1)-L'(-1)))_1\cap V_{\widehat{\h}}(1,0)_1, \; \on{Ker}\mathcal{A}_{\omega'}= (\on{Ker}_{V_{\widehat{\h}}(1,0)} L'(-1))_1\cap  V_{\widehat{\h}}(1,0)_1. $$
\end{prop}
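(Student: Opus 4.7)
The plan is to translate both sides of the claimed identities into linear-algebraic statements on $\h$ via the natural identification $V_{\widehat{\h}}(1,0)_1 \cong \h$, and then exploit the symmetric-idempotent nature of $\mathcal{A}_{\omega'}$ guaranteed by Proposition~\ref{p3.1}.

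First I would compute $L(-1)$ and $L'(-1)$ explicitly on the weight-one subspace. The operator $L(-1) = (\omega_h)_0$ is just the canonical translation $T$ of the underlying vertex algebra: a direct check using $Y(h(-2)\mathbf{1}, z) = \partial_z Y(h, z)$ gives $(h(-2)\mathbf{1})_0 = 0$, and hence $L(-1)$ sends $h_l(-1)\mathbf{1}$ to $h_l(-2)\mathbf{1}$ on $V_1$. For $L'(-1) = \omega'_0$, I would plug in the explicit form \eqref{f3.2} of $\omega'$. The same vanishing $(h_i(-2)\mathbf{1})_0 = 0$ shows that the $B$-part of $\omega'$ contributes nothing to $\omega'_0$, so only the quadratic piece matters. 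A short calculation with the normal-ordered product $:Y(h_i,z)Y(h_j,z):$ produces
\[(h_i(-1)h_j(-1)\mathbf{1})_0\, h_l(-1)\mathbf{1} = \<h_i, h_l\>\, h_j(-2)\mathbf{1} + \<h_j, h_l\>\, h_i(-2)\mathbf{1}.\]
Summing against the coefficients of the quadratic term (using $\tilde a_{ij} = \tilde a_{ji}$) and the orthonormality of $\{h_i\}$ yields
\[L'(-1)\, h_l(-1)\mathbf{1} = \sum_{j=1}^{d} (A_{\omega'})_{lj}\, h_j(-2)\mathbf{1}.\]

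Identifying both $V_{\widehat{\h}}(1,0)_1$ and the subspace $\h(-2)\mathbf{1} \subset V_{\widehat{\h}}(1,0)_2$ with $\h$ in the obvious way ($h_l(-1)\mathbf{1} \leftrightarrow h_l \leftrightarrow h_l(-2)\mathbf{1}$), the two computations above say that $L(-1)|_{V_1}$ is the identity on $\h$, while $L'(-1)|_{V_1}$ is precisely $\mathcal{A}_{\omega'}$. Therefore $(L(-1) - L'(-1))|_{V_1}$ corresponds to $\on{Id}_{\h} - \mathcal{A}_{\omega'}$. Since $\mathcal{A}_{\omega'}$ is a symmetric idempotent by Proposition~\ref{p3.1}, the standard identity $\on{Ker}(\on{Id}_{\h} - \mathcal{A}_{\omega'}) = \on{Im}\mathcal{A}_{\omega'}$ holds; intersecting with $V_{\widehat{\h}}(1,0)_1$ then produces the first claimed equality. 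The second equality is immediate from $L'(-1)|_{V_1}\leftrightarrow \mathcal{A}_{\omega'}$, since the weight-one part of $\on{Ker} L'(-1)$ is exactly $\on{Ker}\mathcal{A}_{\omega'}$.

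The only delicate point is a normalization: the entry $2a_{ii}$ appearing on the diagonal of \eqref{f3.3} is there precisely to compensate for the factor $\tfrac12$ in front of the quadratic sum in $\omega'$, so that the induced linear map on $\h$ comes out as $\mathcal{A}_{\omega'}$ and not some rescaled version. Once this bookkeeping is handled, everything else reduces to the routine mode calculations above.
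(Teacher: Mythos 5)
Your proposal is correct and follows essentially the same route as the paper: both compute the action of $L'(-1)=\omega'_0$ on $V_{\widehat{\h}}(1,0)_1$ explicitly (the paper via the normal-ordered expansion and the resulting linear system $(I-A_{\omega'})X=0$, you via the zero-mode action on basis vectors), identify $(L(-1)-L'(-1))|_{V_1}$ with $\on{Id}_{\h}-\mathcal{A}_{\omega'}$, and conclude using the idempotency of $\mathcal{A}_{\omega'}$. The only cosmetic difference is that for the second equality the paper substitutes $\omega'\mapsto\omega_h-\omega'$ while you read off $\on{Ker}\mathcal{A}_{\omega'}$ directly; these are equivalent.
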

\begin{proof}
By Proposition 3.11.11 and Theorem 3.11.12 in \cite{LL}, we have
$$ \on{Ker}_{V_{\widehat{\h}}(1,0)} L'(-1)=C_{V_{\widehat{\h}}(1,0)}(\<\omega'\>);$$
 $$\on{Ker}_{V_{\widehat{\h}}(1,0)}(L(-1)-L'(-1))=C_{V_{\widehat{\h}}(1,0)}(C_{V_{\widehat{\h}}(1,0)}(\<\omega'\>)).$$
Then $(\on{Ker}_{V_{\widehat{\h}}(1,0)}L'(-1))_{1}\cap V_{\widehat{\h}}(1,0)_1\subset  V_{\widehat{\h}}(1,0)_1\cong \h$.

With  $\omega' \in \on{Sc}(V_{\widehat{\h}}(1,0),\omega_{h})$ being   written as in  \eqref{f3.2},
we have
$$L'(-1)=\sum\limits_{i=1}^{d}a_{ii}\sum\limits_{k\in \Z}:h_i(k)h_i(-1-k):+\sum\limits_{1\leq i<j\leq d}a_{ij}
\sum\limits_{k\in \Z}:h_i(k)h_j(-1-k):,$$
where $:\cdots:$ is the normal order product.
Hence
\begin{equation}\label{3.14} L(-1)-L'(-1)=\sum\limits_{i=1}^{d}(\frac{1}{2}-a_{ii})\sum\limits_{k\in \Z}:h_i(k)h_i(-1-k):-\sum\limits_{1\leq i<j\leq d}a_{ij}
\sum\limits_{k\in \Z}:h_i(k)h_j(-1-k):.\end{equation}

For $\forall h\in V_{\widehat{\h}}(1,0)_{1}$, set $h=\sum\limits_{m=1}^{d}a_mh_{m}(-1)\mathbf{1}$. Using the formula (\ref{3.14}), we have that
$h\in \on{Ker}_{V_{\widehat{\h}}(1,0)}(L(-1)-L'(-1))_1\cap V_{\widehat{\h}}(1,0)_1$ if and only if the vector $(a_1,a_2,\cdots,a_d)$ is a solution of the linear equation system \begin{eqnarray}\label{3.15}\left\{
 \begin{array}{llll}
 (1-2a_{11})x_1-a_{12}x_2-\cdots-a_{1d}x_d=0,\\
 -a_{12}x_1+(1-2a_{22})x_2-\cdots-a_{2d}x_d=0,\\
 \ \cdots\ \ \ \ \cdots\ \ \ \ \cdots\ \ \ \ \cdots\ \ \ \ \cdots\ \ \ \ \cdots\\
 -a_{1d}x_1-a_{2d}x_2-\cdots+(1-2a_{dd})x_d=0.
 \end{array}
 \right.
 \end{eqnarray}
Let $X=(x_1,x_2,\cdots,x_d)^{tr}$. Then the linear equation system
(\ref{3.15}) becomes the matrix equation
$(I-A_{\omega'})X=0,$ where $I$ is the $d\times d$ identity matrix. So we have $\on{Im}\mathcal{A}_{\omega'}=(\on{Ker}_{V_{\widehat{\h}}(1,0)}(L(-1)-L'(-1)))_1\cap V_{\widehat{\h}}(1,0)_1$. Note that  the corresponding matrix for $ \omega_{h}-\omega'$ is $A_{\omega_{h}-\omega'}=I-A_{\omega'}$.   Hence  with $\omega'$ replaced by $\omega_{h}-\omega'$, we have $ \on{Ker}\mathcal{A}_{\omega'}= (\on{Ker}_{V_{\widehat{\h}}(1,0)} L'(-1))_1\cap  V_{\widehat{\h}}(1,0)_1.$
\end{proof}


%
%

\begin{proofof}{\bf Proof of Theorem 1.6.} By Proposition 5.1, we have
$$C_{V_{\widehat{\h}}(1,0)}(C_{V_{\widehat{\h}}(1,0)}(\<\omega'\>))_1\cong \on{Im}\mathcal{A}_{\omega'}\quad \text{and} \quad C_{V_{\widehat{\h}}(1,0)}(\<\omega'\>)_1\cong\on{Ker}\mathcal{A}_{\omega'}.$$
Since $\on{Im}\mathcal{A}_{\omega'}$  and $\on{Ker}\mathcal{A}_{\omega'}$ are both regular subspaces of $\h$. Then
$$\<\on{Im}\mathcal{A}_{\omega'}\>\cong V_{\widehat{\on{Im}\mathcal{A}_{\omega'}}}(1,0)\quad \text{and} \quad
\<\on{Ker}\mathcal{A}_{\omega'}\>\cong V_{\widehat{\on{Ker}\mathcal{A}_{\omega'}}}(1,0)$$
in $V_{\widehat{\h}}(1,0)$.
Since $A_{\omega'}^2=A_{\omega'}$, we have
\begin{eqnarray}
\begin{array}{llll}
\omega'&=\frac{1}{2}(h_1(-1),\cdots, h_d(-1))A_{\omega'}
\left(\begin{array}{lllll}
h_1(-1)\\
~~~~~~\vdots\\
 h_d(-1)
\end{array}\right)\cdot\mathbf{1}+(h_1(-2),\cdots,h_d(-2))A_{\omega'}\Lambda\mathbf{1}\\
&=\frac{1}{2}
\left(A_{\omega'}
\left(\begin{array}{lllll}
h_1(-1)\\
~~~~~~~~~~~~~~~~~~~~\vdots\\
 h_d(-1)
\end{array}\right)\right)^{tr}
\left(A_{\omega'}
\left(\begin{array}{lllll}
h_1(-1)\\
~~~~~~~~~~~~~~~~~~~~\vdots\\
 h_d(-1)
\end{array}\right)\right)\cdot\mathbf{1}+(h_1(-2),\cdots,h_d(-2))A_{\omega'}\Lambda\mathbf{1}.
\end{array}
\end{eqnarray}
 Then  $\omega'$ is the conformal vector of $\<\on{Im}\mathcal{A}_{\omega'}\>$.

Similarly, we have
$$\begin{array}{llllll}
&\omega_{h}-\omega'=\\
&\frac{1}{2}(h_1(-1),\cdots, h_d(-1))(I-A_{\omega'})
\left(\begin{array}{lllll}
h_1(-1)\\
~~~~~~\vdots\\
 h_d(-1)
\end{array}\right)\mathbf{1}+(h_1(-2),\cdots,h_d(-2))(I-A_{\omega'})\Lambda\mathbf{1}\\
&=\frac{1}{2}(h_1(-1),\cdots, h_d(-1))(I-A_{\omega'})^2
\left(\begin{array}{lllll}
h_1(-1)\\
~~~~~~\vdots\\
 h_d(-1)
\end{array}\right)\mathbf{1}+(h_1(-2),\cdots,h_d(-2))(I-A_{\omega'})\Lambda\mathbf{1}\\
&=\frac{1}{2}
\left((I-A_{\omega'})
\left(\begin{array}{lllll}
h_1(-1)\\
~~~~~~\vdots\\
 h_d(-1)
\end{array}\right)\right)^{tr}
\left((I-A_{\omega'})
\left(\begin{array}{lllll}
h_1(-1)\\
~~~~~~\vdots\\
 h_d(-1)
\end{array}\right)\right)\mathbf{1}\\
&\ +(h_1(-2),\cdots,h_d(-2))(I-A_{\omega'})\Lambda\mathbf{1}.
\end{array}$$
Hence $\omega_{h}-\omega'$  is the conformal vector of $\<\on{Ker}\mathcal{A}_{\omega'}\>$. 

Since $\h=\on{Im}\mathcal{A}_{\omega'}\oplus \on{Ker}\mathcal{A}_{\omega'}$ and $\on{Ker}\mathcal{A}_{\omega'}=(\on{Im}\mathcal{A}_{\omega'})^{\perp}$,
then there is
$$(V_{\widehat{\h}}(1,0),\omega_{h})\cong (\<\on{Im}\mathcal{A}_{\omega'}\>\otimes \<\on{Ker}\mathcal{A}_{\omega'}\>,\omega_{h}).$$
Thus we have \begin{equation}\label{3.17} (C_{V_{\widehat{\h}}(1,0)}(\<\omega'\>),\omega_{h}-\omega')= (V_{\widehat{\on{Ker}\mathcal{A}_{\omega'}}}(1,0),\omega_{h}-\omega')\quad \text{and}\end{equation}
\begin{equation} \label{3.18} (C_{V_{\widehat{\h}}(1,0)}(C_{V_{\widehat{\h}}(1,0)}(\<\omega'\>)),\omega')=(C_{V_{\widehat{\h}}(1,0)}(\<\omega_{h}-\omega'\>),\omega')=(V_{\widehat{\on{Im}\mathcal{A}_{\omega'}}}(1,0),\omega').\end{equation}
Moreover, we have
$$(C_{V_{\widehat{\h}}(1,0)}(V_{\widehat{\on{Ker}\mathcal{A}_{\omega'}}}(1,0)),\omega')=(V_{\widehat{\on{Im}\mathcal{A}_{\omega'}}}(1,0),\omega')$$
and  $$(C_{V_{\widehat{\h}}(1,0)}(V_{\widehat{\on{Im}\mathcal{A}_{\omega'}}}(1,0)),\omega_{h}-\omega')=(V_{\widehat{\on{Ker}\mathcal{A}_{\omega'}}}(1,0),\omega_{h}-\omega').$$
Finally, we can also get that
$$(V_{\widehat{\h}}(1,0),\omega_{h})\cong (C_{V_{\widehat{\h}}(1,0)}(\<\omega'\>)\otimes C_{V_{\widehat{\h}}(1,0)}(C_{V_{\widehat{\h}}(1,0)}(\<\omega'\>)),\omega_{h}).$$\qed
\end{proofof} 
\begin{remark}
From above results, we know that all  of conformally closed semi-conformal subalgebras in $(V_{\widehat{\h}}(1,0),\omega_{h})$  are  Heisenberg vertex operator algebras generated by regular subspaces of the weight-one subspace $V_{\widehat{\h}}(1,0)_1$. 
\end{remark}
\begin{remark}
For each $\omega'\in \on{Sc}(V_{\widehat{\h}}(1,0),\omega_{h})$, we want to
describe the set of all semi-conformal subalgebras with   $\omega'$ being the conformal vector.  Each of such semi-conformal subalgebras is a  conformal extension of $\<\omega'\>$ in $V_{\widehat{\h}}(1,0)$.  We  denote this set  by $\pi^{-1}(\omega')$ which is exactly the set of all conformal subalgebras of the smaller Heisenberg vertex operator algebra $V_{\widehat{\on{Im}\mathcal{A}_{\omega'}}}(1,0)$.  It is an interesting question to determine this set.  This depends on  the $\<\omega'\>$-module structure of  $V_{\widehat{\on{Im}\mathcal{A}_{\omega'}}}(1,0)$.   For affine vertex operator algebras such that the conformal subalgebra $\<\omega'\>$ is rational, then decomposing  each of the members in $\pi^{-1}(\omega')$ as direct sums of irreducible modules was the motivation for the study of semi-conformal subalgebras. 
\end{remark}
\section{Characterizations  of Heisenberg vertex operator algebras}
 In this section, we will use the structures of $\on{Sc}(V,\omega)$ and $\on{ScAlg}(V,\omega)$ to give two characterizations of Heisenberg vertex operator algebras. Let us fix the notation $ Y(u, z)=\sum_{n} u_n z^{-n-1}$ for vertex operators. However, in case  that the vertex operator algebra is defined by a Lie algebra $\h$, we will use the notation $ h(n)=h\otimes t^{n}$ in the algebra $\h[t,t^{-1}]$ with $h \in \h$. 

Let $V$ be a simple $\N$-graded vertex operator algebra  with $V_0=\C 1$. Such  $V$ is also called $a~simple~CFT~type$ vertex operator algebra (\cite{DLMM,DM}).  If $V$ satisfies the further condition
that $L(1)V_1=0$, it is called  {\em strong~CFT~type}. 
Li has shown (\cite{Li}) that such a vertex operator algebra $V$ 
has a unique non-degenerate invariant  bilinear form 
$\langle\cdot,\cdot\rangle$ up to a multiplication of a nonzero scalar. 
In particular, the restriction of $\langle\cdot,\cdot\rangle$ to $V_1$ endows $V_1$ 
with a non-degenerate symmetric invariant  bilinear form  defined by
$u_1(v)=\langle u,v\rangle \mbf{1}$ for $u,v\in V_1$.  For $v\in V_n$, the component operator $v_{n-1}$ is called the zero mode of $v$.  It is well-known that $V_1$ forms a Lie algebra with the bracket operation $[u,v]=u_0(v)$ for $u,v\in V_1$.

  In this paper, we will consider  vertex operator algebras $(V,\omega)$  that satisfy the following conditions:
(1) $V$ is a {\em simple CFT type} vertex operator algebra  generated by $V_1$;
(2) The symmetric bilinear form $\langle u,v\rangle=u_1v$ for $u,v\in V_1$ is  nondegenerate. For convenience, we call  such a vertex operator algebra $(V,\omega)$   {\em nondegenerate simple CFT type}. We note that for any vertex operator algebra $(V, \omega)$, and any $\omega' \in \on{Sc}(V,\omega)$, one has
$C_{V}(C_{V}(\<\omega'\>))\otimes C_{V}(\<\omega'\>)\subseteq V$ as a conformal vertex operator subalgebra.

\begin{lem}\label{l6.1}\label{lem5.1}\cite{CL}
 Let $(V,\omega)$ be a nondegenerate simple CFT type vertex operator algebra. Let 
 $(V', \omega')$ and $(V'', \omega'')$ be two vertex operator subalgebras with possible different conformal vectors. Assume that $(V, \omega)=(V', \omega')\otimes (V'', \omega'')$ is a tensor product of vertex operator algebras (see \cite[3.12]{LL}).  Then
\begin{itemize}
\item [1)] $(V',\omega')$ and $(V'' \omega'')$ are semi-conformal subalgebras and  both are also non-degenerate simple CFT type;
\item [2)] $ V_1=V_1'\otimes \mathbf{1}''\oplus \mathbf{1}'\otimes V_1'',$ is an orthogonal decomposition with
respect to the non-degenerate symmetric bilinear form $\langle\cdot,\cdot\rangle$ on $V_1$;

\item [3)] $[V_1'\otimes \mathbf{1}'',\mathbf{1}'\otimes V_1'']=0$ with the Lie bracket $[\cdot,\cdot]$ on $V_1$;

\item [4)]  $\on{Sc}(V',\omega')\otimes \mbf{1}''$,  $ \mbf{1}'\otimes\on{Sc}(V'',\omega'')$, and $\on{Sc}(V',\omega')\otimes\mbf{1}''+\mbf{1}'\otimes\on{Sc}(V'',\omega'')$ are subsets of $\on{Sc}(V,\omega);$

\item [5)] For each $\widetilde{\omega}'\in \on{Sc}(V',\omega')$, we have
$$C_{V}(\<\widetilde{\omega}'\>\otimes \mbf{1}'')=C_{V'}(\<\widetilde{\omega}'\>)\otimes V''$$
and
$$C_{V}(C_{V}(\<\widetilde{\omega}'\>\otimes \mbf{1}''))=C_{V'}(C_{V'}(\<\widetilde{\omega}'\>))\otimes \mbf{1}''.$$
\end{itemize}
\end{lem}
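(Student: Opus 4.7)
My approach will exploit the tensor product decomposition $\omega=\omega'\otimes\mbf{1}''+\mbf{1}'\otimes\omega''$ valid in any tensor product VOA, together with the fact that $V'\otimes\mbf{1}''$ and $\mbf{1}'\otimes V''$ mutually commute inside $V$. Each of the five assertions will then reduce to a direct computation in the tensor product structure, and the truly substantive work is isolated in part 5).

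For part 1), I would first check semi-conformality of $V'$ in $V$ by computing, for $n\geq 0$ and $v'\in V'$,
$$\omega_n(v'\otimes\mbf{1}'')=(\omega')_n v'\otimes \mbf{1}''+v'\otimes(\omega'')_n\mbf{1}''=(\omega')_n v' \otimes \mbf{1}'',$$
since $(\omega'')_n \mbf{1}''=0$ for all $n\geq 0$ (using $L''(-1)\mbf{1}''=0$ and that higher modes strictly lower degree). Next, $V_0=\C\mbf{1}$ forces $V_0'=\C\mbf{1}'=V_0''$, giving CFT type for both. Simplicity of $V'$ follows because any proper nonzero ideal $I'\subsetneq V'$ would yield a proper nonzero ideal $I'\otimes V''$ of the simple $V$. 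Nondegeneracy of the bilinear form on each of $V_1'$ and $V_1''$ will follow from the orthogonal decomposition established in part 2). Part 2) itself is immediate from $V_0'=V_0''=\C$, and the orthogonality reduces to $(u\otimes \mbf{1}'')_1(\mbf{1}'\otimes w) = (u_1\mbf{1}')\otimes w=0$ for $u\in V_1', w\in V_1''$. The same vanishing trick, applied to $(u\otimes\mbf{1}'')_0(\mbf{1}'\otimes w)$, handles part 3).

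For part 4), given $\widetilde{\omega}'\in\on{Sc}(V',\omega')$ with associated semi-conformal subalgebra $U'\subseteq V'$, I claim $U'\otimes \mbf{1}''$ is a semi-conformal subalgebra of $V$ with conformal vector $\widetilde{\omega}'\otimes \mbf{1}''$. The verification parallels part 1): for $n\geq 0$, the $\omega''$-contribution annihilates $\mbf{1}''$, and the remaining equality $\omega_n|_{U'\otimes \mbf{1}''}=(\widetilde{\omega}')_n|_{U'}\otimes\on{id}$ follows from the semi-conformality of $U'$ in $V'$. The case $\mbf{1}'\otimes\on{Sc}(V'',\omega'')$ is symmetric, and the sum case is handled by observing that $U'\otimes U''$ is itself a tensor product VOA with conformal vector $\widetilde{\omega}'\otimes\mbf{1}''+\mbf{1}'\otimes \widetilde{\omega}''$, which is then semi-conformal in $V$ since each summand is.

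The main obstacle is part 5). The inclusion $C_{V'}(\<\widetilde{\omega}'\>)\otimes V''\subseteq C_V(\<\widetilde{\omega}'\>\otimes \mbf{1}'')$ is immediate from part 3) promoted to the commutativity of $V'\otimes\mbf{1}''$ and $\mbf{1}'\otimes V''$ at all weights. For the reverse, I would decompose $v=\sum_j a_j\otimes b_j$ for $v\in C_V(\<\widetilde{\omega}'\>\otimes\mbf{1}'')$ with $\{b_j\}$ linearly independent in $V''$; the commutation constraint with $\widetilde{\omega}'\otimes\mbf{1}''$ then forces each $a_j\in C_{V'}(\<\widetilde{\omega}'\>)$ by linear independence of the $b_j$ factor. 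Applying the identity a second time yields
$$C_V(C_V(\<\widetilde{\omega}'\>\otimes\mbf{1}''))=C_V(C_{V'}(\<\widetilde{\omega}'\>)\otimes V'')=C_{V'}(C_{V'}(\<\widetilde{\omega}'\>))\otimes C_{V''}(V''),$$
and since $V''$ is simple of CFT type, $C_{V''}(V'')=\C\mbf{1}''$, producing the claimed equality. The delicate point is justifying the second equality above: one must apply the commutant-of-a-tensor formula to a subalgebra $C_{V'}(\<\widetilde{\omega}'\>)\otimes V''$ whose left factor is not itself singly generated, which requires invoking the standard double commutant theory for tensor products (as in \cite[Chapter 3.11]{LL}) together with the fact that the right factor $V''$ commutes with itself only through its center.
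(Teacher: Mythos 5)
The paper does not actually prove this lemma: it is quoted from the authors' earlier paper \cite{CL} with no argument given here, so there is nothing internal to compare your proof against. On its own merits your argument is essentially correct and is the natural one: everything reduces to the identity $(a\otimes b)_n=\sum_i a_i\otimes b_{n-1-i}$ together with $(\mbf{1})_j=\delta_{j,-1}\on{id}$ and $u_j\mbf{1}=0$ for $j\geq 0$, and your linear-independence trick for computing commutants in a tensor product is exactly what makes part 5) work. The ``delicate point'' you flag at the end is not actually delicate: $C_V(S)=C_V(\<S\>)$ for any subset $S$, so $C_V(A\otimes V'')=C_V(A\otimes\mbf{1}'')\cap C_V(\mbf{1}'\otimes V'')$, each factor is handled by the same linear-independence argument, and $C_{V''}(V'')=\C\mbf{1}''$ holds for any simple vertex operator algebra (a vector in the center is vacuum-like, so $u\mapsto u_{-1}v$ is a $V''$-module endomorphism of $V''$, hence a scalar by simplicity).

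The one genuine omission is in part 1). In this paper ``nondegenerate simple CFT type'' is defined to include the condition that the algebra is \emph{generated by its weight-one subspace}, and you never verify that $V'$ is generated by $V'_1$ (nor $V''$ by $V''_1$). This is needed, and it does not follow from simplicity or from the CFT-type condition alone; but it is easy to supply: if $U'\subseteq V'$ and $U''\subseteq V''$ are the subalgebras generated by $V'_1$ and $V''_1$, then $U'\otimes U''$ is a vertex subalgebra of $V$ containing $V_1=V'_1\otimes\mbf{1}''\oplus\mbf{1}'\otimes V''_1$, hence equals $V=V'\otimes V''$ since $V$ is generated by $V_1$, and comparing graded dimensions forces $U'=V'$ and $U''=V''$. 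With that sentence added, and with the minor caveat that you should rule out negative-weight components of $V'$ and $V''$ before identifying $V_0=V'_0\otimes V''_0$ (which follows since $V'_{-k}\otimes\mbf{1}''\subseteq V_{-k}=0$), your proof is complete.
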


\begin{lem} \label{lem6.2}\cite{CL}
 Let $(V,\omega)$ be a nondegenerate simple CFT type vertex operator algebra.  If $V=V'\otimes V''$ and $V$ satisfies
\begin{equation}\label{f6.1}
\forall \tilde{\omega}\in \on{Sc}(V,\omega), V\simeq C_{V}(C_{V}(\<\tilde{\omega}\>))\otimes C_{V}(\<\tilde{\omega}\>),
\end{equation}
then $V',V''$ also satisfy (\ref{f6.1}).
Conversely, if $V',V''$ satisfy (\ref{f6.1}) and $\on{Sc}(V,\omega)=\on{Sc}(V',\omega')+\on{Sc}(V'',\omega'')$,
then $V$ also satisfies \eqref{f6.1}.
\end{lem}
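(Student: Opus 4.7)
The plan is to handle the two implications separately, using Lemma \ref{lem5.1} as the main bookkeeping tool for how commutants and semi-conformal vectors behave under the decomposition $V = V'\otimes V''$.

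For the forward direction, fix $\tilde{\omega}' \in \on{Sc}(V',\omega')$. By Lemma \ref{lem5.1}(4) the element $\tilde{\omega}'\otimes \mbf{1}''$ lies in $\on{Sc}(V,\omega)$, so the hypothesis \eqref{f6.1} applied to $\tilde{\omega} = \tilde{\omega}'\otimes \mbf{1}''$, combined with Lemma \ref{lem5.1}(5), gives
\begin{equation*}
V \cong C_V(C_V(\<\tilde{\omega}'\otimes \mbf{1}''\>)) \otimes C_V(\<\tilde{\omega}'\otimes \mbf{1}''\>) \cong C_{V'}(C_{V'}(\<\tilde{\omega}'\>)) \otimes C_{V'}(\<\tilde{\omega}'\>) \otimes V''.
\end{equation*}
Since $V \cong V'\otimes V''$ and $C_{V'}(C_{V'}(\<\tilde{\omega}'\>)) \otimes C_{V'}(\<\tilde{\omega}'\>)$ is already contained in $V'$ as a conformal vertex subalgebra (cf.\ the universal inclusion recalled just before Lemma \ref{lem5.1}), comparing graded characters and cancelling $\ch V''$ (invertible as a formal power series since $V''_0 = \C \mbf{1}''$) forces $V' \cong C_{V'}(C_{V'}(\<\tilde{\omega}'\>)) \otimes C_{V'}(\<\tilde{\omega}'\>)$, and the symmetric argument handles $V''$.

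For the converse, fix $\tilde{\omega} \in \on{Sc}(V,\omega)$ and use the hypothesis $\on{Sc}(V,\omega) = \on{Sc}(V',\omega')+\on{Sc}(V'',\omega'')$ to write $\tilde{\omega} = \tilde{\omega}'\otimes \mbf{1}'' + \mbf{1}'\otimes \tilde{\omega}''$ with $\tilde{\omega}' \in \on{Sc}(V',\omega')$ and $\tilde{\omega}''\in \on{Sc}(V'',\omega'')$. The central step is the commutant factorization
\begin{equation*}
C_V(\<\tilde{\omega}\>) = C_{V'}(\<\tilde{\omega}'\>) \otimes C_{V''}(\<\tilde{\omega}''\>), \quad C_V(C_V(\<\tilde{\omega}\>)) = C_{V'}(C_{V'}(\<\tilde{\omega}'\>)) \otimes C_{V''}(C_{V''}(\<\tilde{\omega}''\>)).
\end{equation*}
Both sides of each equation are conformally closed semi-conformal subalgebras of $V$: the left-hand sides because any commutant is conformally closed (via the general identity $C_V C_V C_V = C_V$), and the right-hand sides because the tensor product of conformally closed subalgebras of $V'$ and $V''$ is conformally closed in $V$, which follows by extending the computation of Lemma \ref{lem5.1}(5) to arbitrary subalgebras. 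They carry the same conformal vector as well, namely $\omega - \tilde{\omega} = (\omega'-\tilde{\omega}')\otimes \mbf{1}'' + \mbf{1}'\otimes (\omega''-\tilde{\omega}'')$ in the first equation and $\tilde{\omega}$ in the second, so by the bijection between $\on{Sc}(V,\omega)$ and conformally closed semi-conformal subalgebras (\cite[Proposition 2.1]{CL}) the two sides coincide.

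Combining this commutant factorization with the hypotheses on $V'$ and $V''$ finally yields
\begin{equation*}
V \cong V' \otimes V'' \cong \bigl( C_{V'}(C_{V'}(\<\tilde{\omega}'\>)) \otimes C_{V'}(\<\tilde{\omega}'\>) \bigr) \otimes \bigl( C_{V''}(C_{V''}(\<\tilde{\omega}''\>)) \otimes C_{V''}(\<\tilde{\omega}''\>) \bigr),
\end{equation*}
and reordering the tensor factors identifies the right-hand side with $C_V(C_V(\<\tilde{\omega}\>)) \otimes C_V(\<\tilde{\omega}\>)$. The main obstacle I expect is the commutant factorization in the converse direction: the conformal-vector match is immediate, but promoting it to an equality of subalgebras requires both the conformal closedness of tensor products and the uniqueness of the conformally closed semi-conformal subalgebra with a prescribed conformal vector.
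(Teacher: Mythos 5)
Your argument is correct. Note that the paper itself supplies no proof of this lemma (it is quoted from \cite{CL}), so there is no internal proof to compare against; judged on its own, your reconstruction is sound and isolates the right difficulty. The forward direction via Lemma~\ref{lem5.1}(4)--(5) followed by cancellation of the invertible graded character of $V''$ is clean, since the tensor factor $C_{V'}(C_{V'}(\<\tilde{\omega}'\>))\otimes C_{V'}(\<\tilde{\omega}'\>)$ already sits inside $V'$ as a graded subalgebra, so equality of characters forces equality. In the converse, the commutant factorization is indeed the crux, because $\<\tilde{\omega}\>$ is generated by the ``diagonal'' vector $\tilde{\omega}'\otimes \mbf{1}''+\mbf{1}'\otimes\tilde{\omega}''$ and is not itself of the form $U'\otimes U''$, so one cannot factor its commutant directly. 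Your route --- both sides are conformally closed semi-conformal subalgebras of $V$ carrying the conformal vector $\omega-\tilde{\omega}$, hence coincide by the injectivity of the map $\overline{\on{S}(V,\omega)}\to\on{Sc}(V,\omega)$ --- does work, but it leans on the general identity $C_{V'\otimes V''}(U'\otimes U'')=C_{V'}(U')\otimes C_{V''}(U'')$, which you should record explicitly: it follows by writing an element of the commutant as $\sum_i v_i'\otimes v_i''$ with the $v_i''$ linearly independent and applying the modes of $U'\otimes\mbf{1}''$ and $\mbf{1}'\otimes U''$ separately. That identity simultaneously gives the conformal closedness of $C_{V'}(\<\tilde{\omega}'\>)\otimes C_{V''}(\<\tilde{\omega}''\>)$ in $V$ (via $C_VC_VC_V=C_V$) and, applied once more, the second displayed equality for $C_V(C_V(\<\tilde{\omega}\>))$; with that supplied, the final reordering of the four commuting tensor factors completes the proof as you state.
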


\begin{lem}\label{lem5.3}\cite{CL}
If $(V,\omega)$ is a nondegenerate simple CFT type vertex operator algebra satisfying (\ref{f6.1}) and  $\widetilde{\omega} \in \on{Sc}(V,\omega)$ is neither $0$ nor $\omega$, then
$$\on{dim} C_{V}(C_{V}(\<\widetilde{\omega}\>))_1>0\; \;\text{and}\;\; \on{dim} C_{V}(\<\widetilde{\omega}\>)_1>0.$$
\end{lem}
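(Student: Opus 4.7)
The plan is to feed $\widetilde{\omega}$ directly into the tensor decomposition hypothesis \eqref{f6.1} and then exploit the fact that each factor inherits the generation-by-weight-one property, making the non-vanishing of $V_1'$ and $V_1''$ equivalent to the non-vanishing of the respective conformal vectors.

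First I would apply \eqref{f6.1} to the given $\widetilde{\omega}\in\on{Sc}(V,\omega)$ to obtain a vertex operator algebra isomorphism
\[
(V,\omega)\;\cong\;\bigl(C_{V}(C_{V}(\langle\widetilde{\omega}\rangle)),\,\widetilde{\omega}\bigr)\,\otimes\,\bigl(C_{V}(\langle\widetilde{\omega}\rangle),\,\omega-\widetilde{\omega}\bigr).
\]
Writing $V':=C_{V}(C_{V}(\langle\widetilde{\omega}\rangle))$ and $V'':=C_{V}(\langle\widetilde{\omega}\rangle)$, Lemma~\ref{lem5.1}(1) guarantees that both $(V',\widetilde{\omega})$ and $(V'',\omega-\widetilde{\omega})$ are themselves nondegenerate simple CFT type. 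By the standing definition, this means in particular that $V'$ is generated as a vertex algebra by $V_1'$, and $V''$ is generated by $V_1''$.

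Next I would argue by contradiction. Suppose $\dim V_1'=0$. Since $V'$ is generated by $V_1'$ together with $\mbf{1}$, this forces $V'=\C\mbf{1}$, and hence $V_2'=0$. But $\widetilde{\omega}\in V_2'$ is the conformal vector of $V'$, and the only conformal vector of the trivial vertex operator algebra $\C\mbf{1}$ is $0$; thus $\widetilde{\omega}=0$, contradicting the hypothesis $\widetilde{\omega}\neq 0$. A symmetric argument applied to $V''$ shows that $\dim V_1''=0$ forces $\omega-\widetilde{\omega}=0$, i.e., $\widetilde{\omega}=\omega$, again contradicting the hypothesis. Since $V_1'\cong V_1'\otimes\mbf{1}''$ and $V_1''\cong\mbf{1}'\otimes V_1''$ inject into $V_1$ as direct summands by Lemma~\ref{lem5.1}(2), this yields both $\dim C_{V}(C_{V}(\langle\widetilde{\omega}\rangle))_1>0$ and $\dim C_{V}(\langle\widetilde{\omega}\rangle)_1>0$.

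The argument is quite short; the only conceptual ingredient is that the package ``nondegenerate simple CFT type''—which bundles simplicity, the $\N$-grading with $V_0=\C\mbf{1}$, nondegeneracy of the weight-one form, and crucially generation by the weight-one subspace—survives the tensor factorization produced by \eqref{f6.1}. That transfer is precisely the content of Lemma~\ref{lem5.1}(1), so there is no serious obstacle beyond invoking it correctly.
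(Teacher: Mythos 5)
Your argument is correct. The paper itself gives no proof here --- the lemma is quoted from \cite{CL} --- but your route (apply \eqref{f6.1} to $\widetilde{\omega}$, use Lemma~\ref{lem5.1}(1) to see that each tensor factor is again nondegenerate simple CFT type and hence generated by its weight-one subspace, so a vanishing weight-one subspace forces the factor to be $\C\mbf{1}$ and its conformal vector, $\widetilde{\omega}$ or $\omega-\widetilde{\omega}$ respectively, to be zero) is exactly the intended one, and all the ingredients you invoke (that $\widetilde{\omega}$ is the conformal vector of $C_V(C_V(\<\widetilde{\omega}\>))$ and $\omega-\widetilde{\omega}$ that of $C_V(\<\widetilde{\omega}\>)$) are established in Section~2 of the paper.
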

\begin{prop} \label{p5.4}
Let $(V,\omega)$ be a nondegenerate simple CFT type vertex operator algebra.
For $h\in V_1$ with  $ \langle h, h\rangle =1 $ and $ L(1)h=a\mbf{1}$ for  some $a\in \C$, the vertex subalgebra $\<h\>$ of $V$ is isomorphic to $ V_{\hat{\h}}(1,0)$ with $\h=\C h$ and the pair $(\<h\>, \omega')$ with $\omega'=\frac{1}{2}h_{-1}h_{-1}\mbf{1}-\frac{a}{2}h_{-2}\mbf{1}\in V_2$ is a semi-conformal subalgebra of $(V, \omega)$.  In particular,  if $\on{dim}V_1=1$ such that $ L(1)h=a\mbf{1}$ for $h\in V_1$ with $\langle h, h\rangle =1$ and some $a\in \C$, then $(V,\omega)\cong (V_{\widehat{V}_1}(1, 0),\omega')$ with  $\omega'=\frac{1}{2}h_{-1}h_{-1}\mbf{1}-\frac{a}{2}h_{-2}\mbf{1}$.
\end{prop}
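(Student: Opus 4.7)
My plan has three steps. First, I would show that $h$ generates a Heisenberg sub-vertex algebra isomorphic to $V_{\hat{\h}}(1,0)$ with $\h=\C h$. Weight considerations in the $\N$-graded CFT type $V$ force $h_k h=0$ for $k\geq 2$, and by hypothesis $h_1 h=\<h,h\>\mbf{1}=\mbf{1}$. Skew-symmetry $Y(h,z)h=e^{zL(-1)}Y(h,-z)h$, read off at $z^{-1}$, gives $h_0 h=-h_0 h=0$. Plugging these into Borcherds' commutator formula $[h_m,h_n]=\sum_{k\geq 0}\binom{m}{k}(h_k h)_{m+n-k}$ collapses the sum to $m(\mbf{1})_{m+n-1}=m\delta_{m+n,0}\on{Id}$, which is the level-one Heisenberg relation. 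The universal property of the Heisenberg vertex algebra then produces a surjective vertex algebra homomorphism $V_{\hat{\C h}}(1,0)\twoheadrightarrow \<h\>$, and the simplicity of $V_{\hat{\C h}}(1,0)$ (nondegenerate form) promotes it to an isomorphism.

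Second, I would verify the semi-conformal identity $\omega_n|_{\<h\>}=\omega'_n|_{\<h\>}$ for $n\geq 0$. Since $\<h\>$ is generated by $h$, it suffices to test on $h$ itself. For $n=0$ both sides equal $L(-1)h=h_{-2}\mbf{1}$ by the creation formula, applied in $V$ on the one hand and inside $\<h\>$ on the other; for $n=1$ both equal $h$, since $h$ has weight one in both structures. The crucial case is $n=2$: expanding $\omega'_2 h$ using the Borcherds product formula together with $(L(-1)h)_k=-k\,h_{k-1}$ reduces it to computing $(h_{-1}h)_2 h$ and $(h_{-2}\mbf{1})_2 h$. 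The Heisenberg relations from step one give $(h_{-1}h)_2 h=0$ and $(h_{-2}\mbf{1})_2 h=-2\mbf{1}$, so $\omega'_2 h=a\mbf{1}$, matching $L(1)h=\omega_2 h$ exactly via the hypothesis. For $n\geq 3$ both sides lie in $V_{2-n}=0$. Under the isomorphism of step one, $\omega'$ corresponds to the Heisenberg conformal vector $\omega_\beta$ of Proposition \ref{pp3.1} with $\beta=-\tfrac{a}{2}h$, so $(\<h\>,\omega')$ is a semi-conformal subalgebra of $(V,\omega)$.

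Finally, for the ``in particular'' statement, the hypothesis $\dim V_1=1$ combined with the standing assumption that $V$ is generated by $V_1$ forces $V=\<h\>$, and step one identifies $V$ with $V_{\hat{V_1}}(1,0)$ as vertex algebras. The conformal vector $\omega$ of $V$ preserves the standard $\N$-gradation (it assigns weight one to $h$, which already determines the gradation on the free-field algebra), so Proposition \ref{pp3.1} gives $\omega=\omega_\beta$ for a unique $\beta\in V_1$. Matching $L(1)h$ computed from $\omega_\beta$ in the Heisenberg VOA with the hypothesis $L(1)h=a\mbf{1}$ forces $\beta=-\tfrac{a}{2}h$, so $\omega=\omega'$. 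I expect the main obstacle to be bookkeeping in the $n=2$ mode computation and its Heisenberg counterpart; both boil down to the same short Borcherds calculation once the level-one Heisenberg relations of step one are in hand.
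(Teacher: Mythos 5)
Your proposal is correct, and its first and third steps essentially coincide with the paper's argument: the paper likewise derives $h_nh=0$ for $n\geq 2$, $h_0h=0$ (via antisymmetry of the Lie bracket $[u,v]=u_0v$ on $V_1$, which is the same fact your skew-symmetry computation proves), obtains the level-one Heisenberg relation from the commutator formula, and uses simplicity of $V_{\hat\h}(1,0)$ to get $\<h\>\cong V_{\hat\h}(1,0)$. Where you genuinely diverge is in verifying semi-conformality: the paper invokes the closed-form criterion of \cite[Proposition 2.2]{CL} and checks $L(0)\omega'=2\omega'$, $L(1)\omega'=0$, $L(2)\omega'=\frac{1-3a^2}{2}\mbf{1}$, $L'(-1)\omega'=L(-1)\omega'$ and $L(n)\omega'=0$ for $n\geq 3$, using the bracket $[L(m),h_n]=-nh_{m+n}+\frac{m(m+1)}{2}a\delta_{m+n,0}$; you instead verify the definition $\omega_n|_{\<h\>}=\omega'_n|_{\<h\>}$ directly by testing on the generator $h$. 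Your route is equally valid and arguably shorter (it subsumes the central-charge check automatically, since $\omega_3\omega'=\omega'_3\omega'$ once the modes agree on all of $\<h\>$), but the reduction ``it suffices to test on $h$'' is the one step you should justify explicitly: setting $a=\omega-\omega'$, the commutator formula gives $[a_m,h_n]=\sum_{j\geq 0}\binom{m}{j}(a_jh)_{m+n-j}=0$ once $a_jh=0$ for all $j\geq 0$, and since $a_m\mbf{1}=0$ for $m\geq 0$ this propagates to all of $\<h\>=\on{Span}\{h_{n_1}\cdots h_{n_k}\mbf{1}\}$. For the ``in particular'' clause your argument via Proposition \ref{pp3.1} (identifying $\omega$ with some $\omega_\beta$ and pinning down $\beta=-\frac{a}{2}h$ from $L(1)h=a\mbf{1}$) is actually more carefully justified than the paper's one-line assertion that the conformal vector must be $\frac{1}{2}h_{-1}^2\mbf{1}-\frac{a}{2}h_{-2}\mbf{1}$.
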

\begin{proof}
Let $h \in V_1$ be as in the assumption with $h_1 h=\langle h,h\rangle \mbf{1}=\mbf{1}$. Since $V$ is $ \N-$ graded, we have $h_nh=0$ for all $n\geq 2$ and 
\begin{equation} \label{f5.6}[h_m,h_n]=\sum_{k=0}^{+\infty}
\left(
\begin{array}{lll}
m\\
k
\end{array}
\right)(h_kh)_{m+n-k}
=(h_0h)_{m+n}+m(h_1h)_{m+n-1},
\end{equation}
for $m,n\in \Z$.  Since $[v,u]=v_0(u)$ defines a Lie algebra structure on $V_1$, hence $ h_0h=[h,h]=0$.     So we have
$$[h_m,h_n]=m\langle h,h\rangle \delta_{m+n,0}\on{Id}=m\delta_{m+n,0}\on{Id}.$$
Let $\h=\C h$. This defines  a Heisenberg  Lie algebra homomorphism $\hat{\h}\rightarrow \on{End}(V)$ with $ h\otimes t^n\mapsto h_n$ and $C\mapsto \on{Id}$.(cf. \ref{sec3.1}).  Let $ U=\<h\>$ be the vertex subalgebra generated by $h$. Then  there is a vertex algebra
 homomorphism $V_{\hat{\h}}(1,0) \rightarrow V$ sending $ h(-1)\mbf{1}$ to $ h_{-1}\mbf{1}=h$. The mage of this map is exactly $ U$. Since $V_{\hat{\h}}(1,0)$ is a simple vertex algebra. Thus $U\cong V_{\hat{\h}}(1,0)$.  In the following we discuss the conformal elements. The vertex algebra $V_{\hat{\h}}(1,0)$ with the conformal vector $ \omega'=\frac{1}{2}h(-1)h(-1)\mbf{1}-\frac{a}{2}h_{-2}\mbf{1}$  and the central charge $c=1-3a^2$, where $ L(1)h=a\mbf{1}$ for some $a\in \C$. Its image in $U$ will still be denoted by $ \omega'$ and $ \omega'=\frac{1}{2}h(-1)h(-1)\mbf{1}-\frac{a}{2}h_{-2}\mbf{1}\in V_2$. Thus $(U, \omega')$ is a vertex operator subalgebra of $(V, \omega)$.  
 We now show that this map is semi-conformal or and  $ \omega'$ is semi-conformal in $(V, \omega)$
if $ L(1)h=a\mbf{1}$ (the condition has not yet been used!).  By using \cite[Proposition 2.2]{CL}, we only need to check the following equations, since $(U, \omega')$ is already  a vertex operator algebra. 
\begin{align}\label{5.7}\left\{\begin{array}{llll}
L(0)\omega'=2\omega';\\
L(1)\omega'=0;\\
L(2)\omega'=\frac{1-3a^2}{2}\mbf{1};\\
L'(-1)\omega'=L(-1)\omega';\\
L(n)\omega'=0, n\geq 3.
\end{array}\right.
\end{align}
 The first  is true since $ \omega'\in V_2$. The last one holds since $ V$ is $\N$-graded. The fourth one holds always.  Similar to \eqref{f5.6}, using the assumption that $ L(1)h=a\mbf{1}$ and the fact $L(k)h=0$ for $k\geq 2 $, we have 
 \begin{eqnarray*}\label{f5.8}
 [L(m),h_n]&=&\sum_{k=-1}^{+\infty}
\binom{m+1}{k+1}(L(k)h)_{m+n-k}\\
&=&(L(-1)h)_{m+n+1}+(m+1)(L(0)h)_{m+n}+\binom{m+1}{2}(L(1)h)_{m+n-1}.
\end{eqnarray*}
Using the fact that $ Y(L(-1)v, z)=\frac{d}{dz} Y(v, z)$ one gets $ (L(-1)v)_{n+1}=-(n+1)v_{n}$. Hence 
\[ [L(m),h_n]=-nh_{n+m}+\frac{m(m+1)}{2}a\delta_{m+n,0}.\]
for all $ m, n\in  \Z$. 
Thus 
\begin{eqnarray*}
L(2)\omega'&=&\frac{1}{2}[L(2), h_{-1}^2]\mbf{1}-\frac{a}{2}[L(2),h_{-2}]\mbf{1}\\
&=& \frac{1}{2}([L(2),h_{-1}]h_{-1}+h_{-1}[L(2), h_{-1}])\mbf{1}-\frac{3a^2}{2}\mbf{1}\\
&=& \frac{1}{2}(h_{1}h_{-1}+h_{-1}h_{1})\mbf{1}-\frac{3a^2}{2}\mbf{1}=\frac{1-3a^2}{2}\mbf{1}.
\end{eqnarray*}
\begin{eqnarray*}
L(1)\omega'&=&\frac{1}{2}[L(1), h_{-1}^2]\mbf{1}-\frac{a}{2}[L(1),h_{-2}]\mbf{1}\\
&=& \frac{1}{2}([L(1),h_{-1}]h_{-1}+h_{-1}[L(1), h_{-1}])\mbf{1}-ah_{-1}\mbf{1}\\
&=& \frac{1}{2}(h_{0}h_{-1}+ah_{-1}+h_{-1}h_{0}+ah_{-1})\mbf{1}-ah_{-1}\mbf{1}\\
&=&\frac{1}{2}h_{0}h_{-1}\mbf{1}=\frac{1}{2}h_{0}h\mbf{1}=0.
\end{eqnarray*}
 
  Now assume $ \dim V_1=1$.  Then any orthonormal element $h \in V_1$ satisfies the condition $L(1)h=a\mbf{1}$. Since $V$ is generated by $V_1$, hence $V\simeq <h>=V_{\widehat{V_1}}(1,0)$ and the conformal vector of $V$ is $\frac{1}{2}h_{-1}^2\mbf{1}-\frac{a}{2}h_{-2}\mbf{1}$.  
 \end{proof}
 We remark that a consequence of this proposition is the following which will be repeatedly used in the proof of Theorem~\ref{thm1.6}.
 
 1)  Any $ h \in V_1$ with the property that $\langle h, h\rangle =1$ defines an element $ \omega_h=\frac{1}{2}h_{-1}^2\mbf{1}-\frac{a}{2}h_{-2}\mbf{1}\in \on{Sc}(V, \omega)$ for $L(1)h=a\mbf{1}$;

2) Note that $ C_V(\{h\})=C_V(\<h\>)=C_V(C_V(C_V(\<h\>)))$. If  $ C_{V_1}(h)=\{ v\in V_1 \; |\; h_0(v)=0\} $ is the centralizer of $h$ in the Lie algebra $V_1$ and $ h^\perp =\{ v\in V_1 \; |\; \langle v, h\rangle =0\} $ is the subspace, then $ C_{V_1}(h)\cap h^\perp \subseteq (C_V(\<h\>))_1$.  In particular, let  $V_1$ be an abelian Lie algebra and $ \dim V_1\geq 2$. Then $ \on{Sc}(V, \omega)\setminus \{0, \omega\}$ is not empty.  


 
\begin{proofof}{\bf Proof of Theorem 1.7.} We will use induction on $\dim V_1$.  If $\on{dim}V_1=1$, there exists a $h\in V_1$ with $\langle h,h\rangle=1$ and   $ L(1)h=a\mbf{1}$ for some $a\in \C$, the theorem is  the special case in Proposition~\ref{p5.4}. Hence $(V,\omega)\cong V_{\widehat{V}_1}(1,0)$ with  $\omega'=\frac{1}{2}h_{-1}h_{-1}\mbf{1}-\frac{a}{2}h_{-2}\mbf{1}$.

Assume  $\on{dim}V_1\geq 2$. Since the bilinear form $ \langle\cdot, \cdot \rangle $ on $ V_1$ is non-degenerate, there is an $ h\in V_1$ such that $\langle h, h \rangle=1$. If $ L(1)h=a\mbf{1}$ for some $a\in \C$, Proposition~\ref{p5.4} implies that $\omega'=\omega_h=\frac{1}{2} h_{-1}^2\mbf{1}-\frac{a}{2}h_{-2}\mbf{1}\in V_2$ is a semi-conformal vector of $(V,\omega)$. 
Then the assumption of the theorem implies 
\begin{equation} 
\label{f5.9} (V, \omega)\cong (C_V(\<\omega_h\>), \omega-\omega_h)\otimes (C_V(\<\omega-\omega_h\>), \omega_h)
\end{equation}
as vertex operator algebras. 

Let $ V'(h)=C_V(\<\omega-\omega_h\>)$ with conformal vector $\omega_h$ and $ V''(h)=C_V(\<\omega_h\>)$ with conformal vector $ \omega-\omega_h$.  Then, by Lemma~\ref{l6.1} and Lemma~\ref{lem6.2}, both $(V'(h), \omega_h)$ and $(V''(h), \omega-\omega_h)$ satisfy the conditions of this theorem. If we can prove that  
\begin{equation}\label{f5.10}
\dim V'(h)_1<\dim V_1\quad \text{ and } \quad \dim V''(h)_1<\dim V_1,
\end{equation}
 then the induction assumption will imply that both $V'(h)$ and $V''(h)$ are Heisenberg vertex operator algebras of the specified rank and of the specified conformal vector. Thus the tensor product vertex operator algebra also has the same property.   By Lemma~\ref{lem5.3}, we only need to show that $ \omega_h\neq \omega $ when $ \dim V_1 \geq 2$. However one cannot be sure that  $ \omega_h\neq \omega$ in this case for arbitrarily chosen $ h\in V_1$.  But we can choose  $h\in V_1$ with $ \langle h, h\rangle =1$ such that $ \dim V'(h)_1$ is the smallest possible. We claim that $ \dim V'(h)_1=1$ and then $ \dim V''(h)_1=\dim V_1-1>0$. Then we are done. In the following we prove this claim. 
 
 Note that $ h\in V'(h)_1$ implies that $ \dim V'(h)_1>0$. Assume that $ \dim V'(h)_1\geq 2$. By Lemma~\ref{lem5.1}.1),  $ V'(h)$ is nondegenerate simple CFT type and is generated by $V'(h)_1$ as a vertex algebra. Thus the bilinear form $\langle\cdot, 
 \cdot \rangle$ remains nondegenerate when restricted to $ V'(h)_1$. There is an $ h'\in 
 V'(h)_1$ such that $ \langle h', h'\rangle =1$ and $ \langle h, h'\rangle =0$. Hence $h$ and $ h'$ are linearly independent. Set $ \omega_{h'}=\frac{1}{2}h'_{-1}h'_{-1}\mbf{1}-\frac{b}{2}h'_{-2}\mbf{1}\in V'(h)_2$ for $L(1)h'=b\mbf{1}$. Then $\omega_{h'}$ is a semi-conformal vector of $V$ by Proposition~\ref{p5.4}. By using \cite[Proposition 2.2]{CL}, $\omega_{h'}$ is also a semi-conformal vector of $V'(h)$. Thus $ \omega_{h'}\preceq \omega_{h}$. 
 We now claim that $\omega_{h'}\neq \omega_{h}$. Applying Lemma~\ref{lem5.3}, we will have $\dim C_{V'(h)}(C_{V'(h)}(\<\omega_{h'}\>))_1<\dim V'(h)_1$. 
Since  $ C_{V'(h)}(C_{V'(h)}(\<\omega_{h'}\>))=C_V(C_V(\<\omega_{h'}\>))=V'(h')$, then 
we have $\dim V'(h')_1<\dim V'(h)_1$. So we have a contradiction to the choice of $h$. 
Hence we must have $ \omega_h=\omega_{h'}$. Note that $V'(h)$ contains 
to vertex subalgebras $U(h)$ and $U(h')$ generated by $ h$ and $h'$ 
respectively with $ \omega_h$ and $\omega_{h'}$ being conformal 
vectors respectively as in the proof of Proposition~\ref{p5.4}  and $ h'_{1}\omega_{h'}=h'$ and $ h_{1}\omega_{h}=h$. But using 
\[ [h'_1, h_{-1}]=(h'_0h)_0+(h'_1h)_{-1}=(h'_0h)_0 \quad \text{and } \quad v_{n}\mbf{1}=0 \; \text{ for all } n\geq 0, \]
we compute 
\begin{eqnarray*}
h'&=&h'_{1}\omega_{h'}=h'_1\omega_{h}=\frac{1}{2}( [h'_1, h_{-1}]h_{-1}+h_{-1}[h'_1,h_{-1}])\mbf{1}\\
&=& \frac{1}{2}(h'_0h)_0 h_{-1}\mbf{1}=\frac{1}{2}(h'_{0}h)_0h=\frac{1}{2}[[h', h],h].
\end{eqnarray*}
 By setting $ h''=[h', h]$,  we have $ [h'',h]=2h'$ in the Lie algebra $ V_1$. 
 Exchanging $ h$ and $h'$ we also get $ [[h, h'], h']=2h$, i.e., $ [h'', h']=-2h$.   This also implies that $ h''\neq 0$ and  the linear span of   $\{h, h', h''\}$ is Lie algebra isomorphic to $ \frak{sl}_2$ with  the standard generators 
 \begin{equation} e= \frac{h+ih'}{\sqrt{2}i}, \quad f= \frac{ih+h'}{\sqrt{2}i}, \quad k=ih''.
  \end{equation}
  Thus we have 
  \[[ k,e]=2e, \quad [k,f]=-2f, \quad [e,f]=k.\]
  Since the bilinear form $ \langle \cdot, \cdot \rangle $ is invariant, a direct computation shows that 
  \[\langle h'', h\rangle=\langle h'', h'\rangle =0 \quad \text{and} \quad \langle h'', h''\rangle=-2.\] 
  Then $ \omega_{\frac{h''}{\sqrt{-2}}}=-\frac{1}{4}h''_{-1}h''_{-1}\mbf{1}-\frac{c}{2\sqrt{-2}}h''_{-2}\mbf{1}$  for $L(1)h''=c\mbf{1}$ is also a semi-conformal vector of $ V'(h)$. If $ \omega_{\frac{h''}{\sqrt{-2}}}\neq \omega_h$, we are done. We shall show that this is the case. 
  
 Let $ U$ be the vertex subalgebra generated by $ \{h, h', h''\}$ in $ V'(h)$ with the conformal vector $ \omega_h$ with has central charge $1-3 a^2$. Then $U$ is a quotient of the affine vertex algebra $ V_{\widehat{\frak{sl}}_2}(1,0)$ and there is a surjective map $ U\rightarrow L_{\widehat{\frak{sl}}_2}(1,0)$ such that the images of $ \omega_{h}$, $ \omega_{h'}$, and $\omega_{h''}$ in $ L_{\widehat{\frak{sl}_2}}(1,0)$ are 
 $$ \tilde{\omega}_{h}=\frac{1}{2}h(-1)h(-1)\mbf{1}-\frac{a}{2}h(-2)\mbf{1},  \tilde{\omega}_{h'}=\frac{1}{2}h'(-1)h'(-1)\mbf{1}-\frac{b}{2}h'(-2)\mbf{1}$$and $$ \tilde{\omega}_{h''}=\frac{1}{2}h''(-1)h''(-1)\mbf{1}-\frac{c}{2}h''(-2)\mbf{1},$$
  respectively. Here we are using the notation $ h(-1)=h\otimes t^{-1}$ in the affine Lie algebra. Note that, under the basis $ \{e, f, k\}$ in $ \frak{sl}_2$, we have 
  \[ h=\frac{\sqrt{2}i}{2}(e-if), \quad  h'=\frac{\sqrt{2}i}{2}(f-ie), \quad \text{ and } \quad h''=-ik.\]
 
  Expressing three vectors $\omega_h,\omega_h',\omega_h''$  in terms of $ \{e, f, k\}$ in $L_{\widehat{\frak{sl}}_2}(1,0)=V_{\widehat{\frak{sl}}_2}(1,0)/\<e(-1)^2\mbf{1}\> $,  we have 
  \begin{eqnarray*}
\tilde{\omega}_{h}&=& \frac{1}{4}(f(-1)f(-1)+if(-1)e(-1)+ie(-1)f(-1))\mbf{1}-\frac{\sqrt{2}ai}{4}e(-2)\mbf{1}+\frac{\sqrt{2}a}{4}f(-2)\mbf{1},\\
\tilde{\omega}_{h'}&=& \frac{1}{4}(-f(-1)f(-1)+if(-1)e(-1)+ie(-1)f(-1))\mbf{1}-\frac{\sqrt{2}bi}{4}f(-2)\mbf{1}+\frac{\sqrt{2}b}{4}e(-2)\mbf{1},\\
 \tilde{\omega}_{\frac{h''}{\sqrt{-2}}}&=&\frac{1}{4}k(-1)k(-1)\mbf{1}+\frac{ci}{2}k(-2)\mbf{1}.
  \end{eqnarray*}
 Hence $ \tilde{\omega}_{h}-\tilde{\omega}_{h'}=\frac{1}{2}f(-1)^2\mbf{1}+\frac{\sqrt{2}}{4}(a-bi)f(-2)\mbf{1}+\frac{\sqrt{2}}{4}(b-ai)e(-2)\mbf{1}\neq 0$ by considering a PBW basis in $V_{\widehat{\frak{sl}}_2}(1,0)$ in the order $ \prod_r f(-n_{r})\prod_s k(-m_{s})\prod_t e(-l_{t}) $ with all $ n_r, m_s, l_t>0$ and using the fact that the weight two subspace of $\<e(-1)^2\mbf{1}\>$ is $ \C e(-1)^2\mbf{1}$. This also implies that  $ \tilde{\omega}_h$,  $ \tilde{\omega}_{h'},$ and $ \tilde{\omega}_{\frac{h''}{\sqrt{-2}}}$ are distinct in $ L_{\widehat{\frak{sl}}_2}(1,0)$.   This contradicts the early assumption that   $\omega_{h}=\omega_{h'}={\omega}_{\frac{h''}{\sqrt{-2}}}$.  Thus, we  have completed the proof of the theorem. \qed
  \end{proofof}   

\begin{remark}

1) In Theorem 1.6,   the condition $ L(1)V_1=0$ isn't satisfied on $(V_{\hat{\h}}(1,0),\omega_{h})$ 
for $ h\neq 0$.  But we know there exists a $a\in \C$ such that $L(1)h=a\mbf{1}$ for any $h\in V_1$.

2)   The tensor decomposition condition \eqref{e5.5}
\begin{equation*}
\forall \omega'\in \on{Sc}(V,\omega), V\simeq C_{V}(C_{V}(\<\omega'\>))\otimes C_{V}(\<\omega'\>)
\end{equation*}   is    critical in conditions to distinguishing Heisenberg vertex algebra from other vertex algebras arising from Lie algebras.  Checking this condition could be a challenge.  

 For a vertex operator algebra $(V, \omega)$ of  nondegenerated simple CFT type, by Lemma 6.1,  the condition \eqref{e5.5} implies that for an $\omega' \in\on{Sc}(V, \omega)$,   the following assertions hold:
\begin{itemize}
\item[(1)]
$C_{V}(C_{V}(\<\omega'\>))_1$ and $C_{V}(\<\omega'\>)_1$ are mutually orthogonal in $V_1$;

\item[(2)] $V_1\simeq C_{V}(C_{V}(\<\omega'\>))_1\oplus C_{V}(\<\omega'\>)_1.$
\end{itemize}

\end{remark}
On the other hand, the following Lemma will provide a way to verify the condition \eqref{e5.5}. It will also be used in the proof of Theorem~\ref{thm1.7}.

\begin{lem} \label{lem5.7} Let $(V,\omega)$ be a nondegenerate simple CFT type vertex operator algebra.
For $\omega'\in \on{Sc}(V,\omega)$, assume  $\on{dim}C_{V}(C_{V}(\<\omega'\>))_1\neq 0$, $\on{dim}C_{V}(\<\omega'\>)_1\neq 0$,  and $\dim V_1=\dim C_{V}(\<\omega'\>)_1 +\dim C_{V}(C_{V}(\<\omega'\>))_1$, then we have
\begin{itemize}
\item[1)] $V=C_{V}(\<\omega'\>)\otimes C_{V}(C_{V}(\<\omega'\>));$
\item[2)] $C_{V}(\<\omega'\>)=\<C_{V}(\<\omega'\>)_1\> \text{ and} \; C_{V}(C_{V}(\<\omega'\>))=\<C_{V}(C_{V}(\<\omega'\>))_1\>.$
\end{itemize}
\end{lem}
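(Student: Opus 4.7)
The plan is to establish part 1) first and deduce part 2) as a consequence. Throughout, set $U := C_V(\langle\omega'\rangle)$, equipped with conformal vector $\omega - \omega'$, and $W := C_V(C_V(\langle\omega'\rangle))$, equipped with conformal vector $\omega'$; both are semi-conformal subalgebras of $V$, they commute pointwise as vertex subalgebras (each lies in the commutant of the other), and their conformal vectors satisfy $\omega_U + \omega_W = \omega$.

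The first step is to show that $U_1$ and $W_1$ are orthogonal in $V_1$. For $u \in U_1$ and $v \in W_1$, the commutativity of vertex operators forces $u_n v = 0$ for all $n \geq 0$; in particular $u_1 v = \langle u, v\rangle \mathbf{1} = 0$, so $\langle u, v\rangle = 0$. Combined with the hypothesis $\dim V_1 = \dim U_1 + \dim W_1$, this yields an orthogonal direct sum decomposition $V_1 = U_1 \oplus W_1$, and the restrictions of the nondegenerate bilinear form on $V_1$ to $U_1$ and to $W_1$ are each nondegenerate by standard linear algebra.

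Since $U$ and $W$ commute and their conformal vectors sum to $\omega$, there is a canonical vertex operator algebra homomorphism $\Phi : U \otimes W \to V$ sending $u \otimes w$ to $u_{-1} w$. The image of $\Phi$ is a vertex subalgebra of $V$ containing $U_1 \oplus W_1 = V_1$, and since $V$ is generated by $V_1$, $\Phi$ is surjective. The main obstacle is injectivity: $\ker \Phi$ is a graded ideal of $U \otimes W$, and by inspection $(\ker \Phi)_0 = 0$ (since $\mathbf{1} \otimes \mathbf{1} \mapsto \mathbf{1}$) and $(\ker \Phi)_1 = 0$ (since $\Phi$ restricts to an isomorphism on weight one subspaces $U_1 \oplus W_1 \to V_1$). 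To eliminate higher degree components, my approach would be to leverage the invariant bilinear forms: $\Phi$ preserves them, and once one establishes the nondegeneracy of the restricted forms on $U$ and $W$ (which one reduces by a generation-and-strong-reconstruction argument from $V_1$ to the already-established nondegeneracy on $U_1$ and $W_1$), the tensor product form on $U \otimes W$ is nondegenerate, so any element of $\ker \Phi$ lies in its radical and therefore vanishes. Alternatively, one may take a nonzero element of $\ker \Phi$ of minimal weight $n \geq 2$ and exploit the modes $(v \otimes \mathbf{1})_m, (\mathbf{1}\otimes v')_m$ of $V_1$-elements (which lower weight) together with the simplicity of $V$ to force a contradiction.

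Given the isomorphism $\Phi: U \otimes W \cong V$ from 1), part 2) follows by a bigrading argument. The vertex subalgebras $\langle U_1\rangle \subseteq U$ and $\langle W_1\rangle \subseteq W$ correspond under $\Phi$ to $\langle U_1\rangle \otimes \mathbf{1}$ and $\mathbf{1} \otimes \langle W_1\rangle$ respectively; since $U_1$ and $W_1$ commute, the subalgebra of $V$ generated by $V_1 = U_1 \oplus W_1$ equals $\langle U_1\rangle \otimes \langle W_1\rangle$, and this must be all of $V = U \otimes W$ by the generation hypothesis on $V$. Comparing each bidegree component $U_p \otimes W_q$ in the decomposition $U \otimes W = \bigoplus_{p,q} U_p \otimes W_q$ then forces $\langle U_1\rangle_p = U_p$ and $\langle W_1\rangle_q = W_q$ for all $p, q$, yielding $\langle U_1\rangle = U$ and $\langle W_1\rangle = W$.
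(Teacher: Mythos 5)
The paper states this lemma without giving a proof (and, unlike the three lemmas quoted from \cite{CL} at the start of Section 6, without attributing it to an earlier source), so there is no in-text argument to measure yours against; judged on its own terms, your skeleton (orthogonality of $U_1$ and $W_1$, the multiplication map $\Phi\colon U\otimes W\to V$, surjectivity from generation by $V_1$, and the bidegree comparison for part 2)) is reasonable, but the load-bearing steps are not secured. First, a repairable point: over $\C$ with a symmetric bilinear form, two mutually orthogonal subspaces whose dimensions sum to $\dim V_1$ need not intersect trivially (both may contain a common isotropic vector), so ``standard linear algebra'' does not by itself yield $V_1=U_1\oplus W_1$. One genuinely needs VOA input here, e.g.\ the identifications $U=\on{Ker}_VL'(-1)$ and $W=\on{Ker}_V(L(-1)-L'(-1))$ from \cite[Theorem 3.11.12]{LL} (used in Proposition 5.1 of the paper), which give $U_1\cap W_1\subseteq\on{Ker}L(-1)\cap V_1=0$ for a nondegenerate CFT-type algebra.

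The serious gap is the injectivity of $\Phi$, which is the entire content of part 1), and your primary strategy for it cannot work in the generality the paper needs. By Li's theorem the space of invariant bilinear forms on a simple CFT-type VOA is $(V_0/L(1)V_1)^*$; the paper deliberately does \emph{not} assume strong CFT type, and explicitly remarks that $L(1)V_1=0$ fails for $(V_{\widehat{\h}}(1,0),\omega_h)$ with $h\neq 0$. For such $V$ (and likewise for $U$ and $W$ with their conformal vectors) the space of invariant forms is zero, so there is no ``tensor product form'' on $U\otimes W$ whose radical could absorb $\ker\Phi$; the nondegenerate form $u_1v$ on $V_1$ does not extend to an invariant form on the whole algebra. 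Your fallback (a minimal-weight element of $\ker\Phi$ plus simplicity of $V$) is not an argument as stated: simplicity of $V$ controls ideals of $V$, not ideals of $U\otimes W$, and a surjection onto a simple algebra is injective only when the source is simple, which you have not shown for $U\otimes W$. To close the gap one needs something like a bigraded comparison using the commuting operators $L'(0)$ and $L(0)-L'(0)$ (with $L'(0)|_U=0$, $(L(0)-L'(0))|_W=0$ and $V_0=\C\mathbf{1}$), or a direct proof that $U$ and $W$ are simple. Part 2), as you derive it from part 1), is fine.
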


\begin{proofof}{\bf Proof of Theorem 1.8.}
For notational convenience, we denote $U(\omega')=C_{V}(C_{V}(\<\omega'\>))$ for each $\omega'\in \on{Sc}(V,\omega)$.  Then $U(\omega-\omega')=C_{V}(\<\omega'\>)$.
 Note that $U(\omega^1)\otimes U(\omega^2-\omega^1)\otimes\cdots \otimes U(\omega^d-\omega^{d-1})$ is a conformal subalgebra  of $V$ with the same conformal vector $\omega$. We know that
$U(\omega^1)_1\oplus U(\omega^2-\omega^1)_1\oplus \cdots \oplus U(\omega^d-\omega^{d-1})_1$ is a subspace of $V_1$. Since $\on{dim}U(\omega^{i}-\omega^{i-1})_1\neq 0$ for $i=1,\cdots,d$, then $\on{dim}U(\omega^{i}-\omega^{i-1})_1\geq 1$. Hence
$$V_1=U(\omega^1)_1\oplus U(\omega^2-\omega^1)_1\oplus \cdots \oplus U(\omega^d-\omega^{d-1})_1.$$  Thus $ \dim U(\omega^i-\omega^{i-1})_1=1$.  By  the given chain condition  and an induction on $d$  using  Lemma~\ref{lem5.7}, we have  $$V=\<V_1\>=\<U(\omega^1)_1\>\otimes \<U(\omega^2-\omega^1)_1\>\otimes\cdots \otimes \<U(\omega^d-\omega^{d-1})_1\>
.$$  Thus $V=U(\omega^1)\otimes U(\omega^2-\omega^1)\otimes\cdots \otimes U(\omega^d-\omega^{d-1})$ and  $U(\omega^{i}-\omega^{i-1})=\<U(\omega^{i}-\omega^{i-1})_1\>$ for $i=1,\cdots,d$. Since  $\on{dim}U(\omega^{i}-\omega^{i-1})_1=1$, we take $h^1\in U(\omega^{i}-\omega^{i-1})_1$ and $\langle h^1,h^1\rangle=1$ such that $L(1)h^1=\lambda_1\mathbf{1}$, Proposition~\ref{p5.4} implies that   $U(\omega^{i}-\omega^{i-1})$ is isomorphic to a Heisenberg vertex operator algebra $V_{\hat{\h}}(1,0)$ with $\dim \h =1$ and conformal vector of the form $ \omega _{h^1}=\frac{1}{2}h^1_{-1}h^1_{-1}\mbf{1}-\frac{\lambda_1}{2}h^1_{-2}\mbf{1}$.  Therefore, $V$ is isomorphic to the Heisenberg vertex operator algebra $V_{\widehat{\h}}(1,0)$ with $ \h=V_1$ and the conformal vector of the form $ \omega_{h^1}$. \qed
\end{proofof}
\begin{remark} Theorem~\ref{thm1.7} can also be proved using the argument used in the proof of  Theorem~\ref{thm1.6}. The advantage of Theorem~\ref{thm1.7} is that one does not have to verify the condition \eqref{e5.5} for all $ \omega' \in \on{Sc}(V,\omega)$.  The condition of Theorem~\ref{thm1.7} is more convenient to verify when one uses a specific construction of $V$ such as most of the known examples.  When the condition $ L(1)V_1=0$ is removed, we prove that $V$ is still isomorphic to $ V_{\hat{\h}}(1, 0)$ with conformal vector $ \omega_{h}$ and $ h$ needs not be zero.
\end{remark}
\begin{remark}
Our aim is to  understand the invariants  of a vertex  algebra $V$ in terms of the moduli space of conformal vectors preserving a fixed gradation of $V$.  In particular, we first understand the invariants of a vertex operator algebra $(V,\omega)$ by virtue of the moduli space of semi-conformal vectors of $(V,\omega)$.
 As the Heisenberg case has indicated here,  for an affine vertex operator algebra $V$ associated to a finite dimensional simple Lie algebra $ \frak{g}$,  the variety $\on{Sc}(V, \omega)$ is closely related  to the  Lie algebra $\frak{g}$ with the fixed level $k$(See \cite{CL2} for the case of Lie algebra $\sl_2(\C)$).  In the case that $(V, \omega)$ is a lattice vertex operator algebra associated to a lattice $ (L, \langle\cdot, \cdot \rangle )$, the variety $\on{Sc}(V, \omega)$ is determined completely by the lattice structure.   We shall  describe invariants of affine  vertex operator algebras   in terms of  their moduli spaces of conformal vectors preserving a fixed gradation and semi-conformal vectors .
\end{remark}

\end{document}